\newcommand{\edgedraw}[2]{\draw (#1)--(#2);}
\newcommand{\dashedgedraw}[2]{\draw [dashed] (#1)--(#2);}
\tikzset{
    >=stealth',
    punkt/.style={
           rectangle,
           rounded corners,
           draw=black, very thick,
           text width=6.5em,
           minimum height=2em,
           text centered},
    pil/.style={
           ->,
           thick,
           shorten <=2pt,
           shorten >=2pt,}
}
\tikzset{every loop/.style={min distance=2mm,in=225,out=135,looseness=10}}
\newtheorem{thm}{Theorem}[section]
\newtheorem{pro}[thm]{Proposition}
\newtheorem{lem}[thm]{Lemma}
\newtheorem{cor}[thm]{Corollary}
\theoremstyle{definition}
\newtheorem{dfn}[thm]{Definition}
\newtheorem*{acknowledgements}{Acknowledgements}
\theoremstyle{remark}
\newtheorem{rmk}[thm]{Remark}
\newtheorem{problem}{Problem}
\numberwithin{equation}{section}
\def\bbQ{\mathbb{Q}}
\def\N{\mathbb{N}}
\def\J{\mathscr{J}}
\def\D{\mathscr{D}}
\def\R{\mathscr{R}}
\def\L{\mathscr{L}}
\def\H{\mathscr{H}}
\def\cS{\mathcal{S}}
\def\cT{\mathcal{T}}
\def\cI{\mathcal{I}}
\def\cU{\mathcal{U}}
\def\cC{\mathcal{C}}
\def\cV{\mathcal{V}}
\def\cA{\mathcal{A}}
\def\cB{\mathcal{B}}
\def\eps{\varepsilon}
\def\es{\varnothing}
\def\Ra{\Rightarrow}
\def\wh#1{\widehat{#1}}
\def\Fr{Fra\"\i ss\'e\ }
\def\Frs{Fra\"\i ss\'e's\ }
\def\Age{\mathrm{Age}}
\newcommand{\cupdot}{\mathbin{\mathaccent\cdot\cup}}
\DeclareMathOperator\Aut{Aut} \DeclareMathOperator\id{id} 
\DeclareMathOperator\im{im} \DeclareMathOperator\dom{dom}
\DeclareMathOperator\fix{fix} \DeclareMathOperator\rank{rank}
\begin{document}


\title[Locally finite maximally homogeneous semigroups]{
Universal locally finite maximally homogeneous \\ semigroups and inverse semigroups 
} 


\author{IGOR DOLINKA}

\address{Department of Mathematics and Informatics, University of Novi Sad, Trg Dositeja Obradovi\'ca 4, 21101 Novi
Sad, Serbia}
\email{dockie@dmi.uns.ac.rs}

\author{ROBERT D. GRAY}

\address{School of Mathematics, University of East Anglia, Norwich NR4 7TJ, England, UK}
\email{Robert.D.Gray@uea.ac.uk}


\subjclass[2010]{20M20, 20M10, 03C07, 20F50}

\keywords{Hall's universal countable homogeneous group, homogeneous structures, maximally homogeneous semigroup, amalgamation.}


\begin{abstract}
In 1959, P.\ Hall introduced the locally finite group $\cU$, today known as Hall's universal group. This group
is countable, universal, simple, and any two finite isomorphic subgroups are conjugate in $\cU$. It can be explicitly described as a direct limit of finite symmetric groups. It is homogeneous in the model-theoretic sense since it is the \Fr limit of the class of all finite groups. Since its introduction Hall's group, and several natural generalisations, have been widely studied. In this article we use a generalisation of \Frs theory to construct a countable, universal, locally finite semigroup $\cT$, that arises as a direct limit of finite full transformation semigroups, and has the highest possible degree of homogeneity. We prove that it is unique up to isomorphism among semigroups satisfying these properties. We prove an analogous result for inverse semigroups, constructing a maximally homogeneous universal locally finite inverse semigroup $\cI$ which is a direct limit of finite symmetric inverse semigroups (semigroups of partial bijections). 
The semigroups $\cT$ and $\cI$ are the natural counterparts of Hall's universal group for semigroups and inverse semigroups, respectively. While these semigroups are not homogeneous, they still exhibit a great deal of symmetry.
We study the structural features of these semigroups and locate several
well-known homogeneous structures within them, such as the countable generic semilattice, the countable random
bipartite graph, and Hall's group itself.
\end{abstract}


\maketitle

\section{Introduction}\label{sec_intro}

In his beautiful 1959 paper \cite{PHall} Philip Hall proved that there is a unique countable universal locally finite homogeneous group $\cU$. Here universal means that every finite group arises as a subgroup of $\cU$, and the word homogeneous is in the sense of \Fr (see \cite[Chapter~6]{Hodges}), and means that in Hall's universal group any isomorphism between finite subgroups extends to an automorphism of $\cU$. Indeed, the class of finite groups forms an amalgamation class and Hall's group is the unique \Fr limit of this class. In addition to proving the existence of this group, Hall showed that $\cU$ has many interesting properties including: 

\begin{itemize}[leftmargin=0.7cm]
\item It may be obtained concretely as a direct limit of symmetric groups by repeated applications of Cayley's Theorem in the following way. Take any group $G_0$ of order at least three. Then Cayley's Theorem can be applied to embed $G_0$ as a subgroup of the symmetric group $G_1 = \cS_{G_0}$. This process can then be repeated, embedding $G_1$ into the symmetric group $G_2 = \cS_{G_1}$ and so on. 
\item Any two isomorphic finite subgroups of $\cU$ are conjugate in $\cU$. In fact, any isomorphism between finite subgroups of $\cU$ is induced by an inner automorphism. 
\item For any $m > 1$ the set of all elements of order $m$ forms a single conjugacy class, and every element of $\cU$ can be written as a product of two elements of order $m$. This implies that $\cU$ is a simple group. 
\item It contains $2^{\aleph_0}$ distinct copies of each countable locally finite group. 
\end{itemize}
In their book \cite[Chapter~6]{KegelBook} Kegel and Wehrfritz remark that a universal locally finite group is in some sense a universe in which to do finite group theory.

%
%
%
%

Hall's group is both a direct limit of symmetric groups, and a universal locally finite simple group. It thus provides an example of central importance in the theory of infinite locally finite groups; see \cite{Hartley1995, KegelBook}. Other related work on locally finite simple groups, and direct limits of symmetric groups, may be found in \cite{Hickin1986, Kroshko1998, LeinenPuglisi2005, ThomasTD2014}. Sylow subgroups of Hall's group were investigated in \cite{Dalle1999}. 
The Grothendieck group of finitely generated projective modules over the complex group algebra of Hall's group was considered in \cite{Donkin2006}. 
More recently Hall's group has arisen in work relating to the Urysohn space \cite{Cameron2008, Doucha2016}. In \cite{Doucha2016} it is shown that there exists a universal action of Hall's locally finite group on the Urysohn space $U$ by isometries. Hall's group appears as an example in the work of Samet \cite{Samet2009} on rigid actions of amenable groups, and in the topological Galois theory developed in \cite{Caramello2016}. Interesting new results on the automorphism group of $\cU$ have been obtained in very recent work of Paolini and Shelah \cite{PaoliniShelah2017}.

The work of the present article begins with a question posed by Manfred Droste at the international conference \emph{``The 83rd Workshop on General Algebra (AAA83)''}, Novi Sad, Serbia, March 2012, who asked whether there is an analogue of Hall's universal group for semigroups. The analogue of the symmetric group in semigroup theory is the full transformation semigroup $\cT_n$ of all maps from an $n$-element set to itself under composition. One would expect, therefore, the correct semigroup-theoretic analogue of Hall's group to be a limit of finite full transformation semigroups. By Cayley's Theorem for semigroups (see \cite[Theorem~1.1.2]{Howie}) every finite semigroup is a subsemigroup of some $\cT_n$, hence any infinite limit of finite full transformation semigroups will be universal and locally finite. On the other hand such a semigroup cannot be homogeneous. In fact by \Frs Theorem, no countable universal locally finite semigroup can be homogeneous since the class of finite semigroups does not form an amalgamation class \cite[Section 9.4]{CP2}. This leads naturally to the question of how homogeneous a countable universal locally finite semigroup can be. As we shall see, there is a well-defined notion of the maximal amount of homogeneity that such a semigroup can possess. We call such semigroups maximally homogeneous. 

In more detail, if $S$ is a semigroup and $T$ is a subsemigroup of $S$, we say that $\Aut(S)$ \emph{acts homogeneously on copies of $T$} if for any subsemigroups $T_1, T_2 \leq S$, if $T_1 \cong T \cong T_2$ then every isomorphism $\phi: T_1 \rightarrow T_2$ extends to an automorphism of $S$. For fixed $S$ we can consider the class $\mathcal{C}(S)$ of isomorphism types of finite semigroups $T$ on which $\Aut(S)$ acts homogeneously. The class $\mathcal{C}(S)$ provides a measure of the level of homogeneity of $S$. As $S$ ranges over all countable universal locally finite semigroups, the classes $\mathcal{C}(S)$ form a partially ordered set under inclusion, which (as we shall see in Proposition~\ref{prop_most_hom_T} and Theorem~\ref{main_sem}) has a maximum element $\cB$. A countable universal locally finite semigroup $S$ is said to be maximally homogeneous if $\mathcal{C}(S) = \cB$.          
We shall prove that, up to isomorphism, there is a unique countable universal locally finite semigroup $\cT$, which is a limit of finite full transformation semigroups, and is maximally homogeneous. 

For inverse semigroups, the analogue of the symmetric group is the symmetric inverse semigroup $\cI_n$ of all partial bijections from an $n$-element set to itself under composition of partial maps
(see \cite{Lawson} for a general introduction to inverse semigroup theory). As for the case of semigroups, there is a well-defined notion of maximally homogeneous universal locally finite inverse semigroup.  We shall prove that there is a unique countable universal locally finite inverse semigroup $\cI$, which is a limit of symmetric inverse semigroups, and is maximally homogeneous. 

The semigroups $\cT$ and $\cI$ are the natural counterparts of Hall's universal group for semigroups and inverse semigroups, respectively. While these semigroups are not homogeneous, they exhibit a great deal of symmetry and richness in their algebraic and combinatorial structure. Since they are not homogeneous they cannot be constructed using \Frs Theorem. We instead make use of a well-known generalisation of \Frs theory called the \emph{Hrushovski construction} which, among other things, was used as the basis of the construction of 
%
%
%
some important counterexamples in model theory; see \cite{Hrushovski1993}. 
We refer the reader to \cite[Section~3]{Evans} for a description of this method. 
This 
generalisation of \Frs theory allows one to construct structures where the automorphism group acts homogeneously only on a privileged class of substructures; see \cite[Section~2.4]{Dugald}. 
%
Further generalisations are possible, including a general category-theoretic version of the \Fr construction which can be found in \cite{Droste1992} and \cite{Droste1993}. In particular, the machinery from the paper \cite{Droste1993} could also be applied to obtain the existence and uniqueness results which we give in Section~4. 

After proving the existence and uniqueness of $\cT$ and $\cI$, the rest of the article will be devoted to investigating their structure. In particular we shall see that several well-known homogeneous structures may be found in the subgroup and idempotent structure of these semigroups, including the countable generic semilattice, the countable random bipartite graph, and Hall's universal group itself.

This paper is comprised of nine sections including the introduction. In Section~2 we introduce all the necessary definitions and notation from semigroup and model theory needed for what follows. 
The notions of maximally homogeneous semigroup and inverse semigroup are discussed in Section~3 together with the connection to the notion of an amalgamation base. Section~4 is devoted to proving the existence and uniqueness of $\cT$ and $\cI$. In Section~5 we prove our main results about the structure of $\cI$ (Theorem~\ref{prop_i}). The main results about the structural properties of $\cT$ are given in Sections~6 and 7 (summarised in Theorem~\ref{prop_t}). 
%
%
%
%
In Section~8 we make some comments about the problem of determining which infinite semigroups arise as subsemigroups of $\cT$, and the corresponding question for $\cI$. 
Finally, in Section~9 we discuss the relationship between $\cT$ and full transformation limit semigroups obtained by iterating Cayley's Theorem, and the analogous question for $\cI$.


\section{Preliminaries} \label{sec_prelim} 

\subsection{Locally finite groups and semigroups} \label{subsec_hall}
%
%
%

A group $G$ is said to be \emph{locally finite} if every finite subset of $G$ generates a finite subgroup. For a comprehensive introduction to the theory of locally finite groups we refer the reader to the book \cite{KegelBook}. Here we recall just a few basic facts. 

If $G_i$ ($i \in \mathbb{N}$) is a countable collection of finite groups such that $G_i$ is a proper subgroup of $G_{i+1}$ for all $i \in \mathbb{N}$,  then the union $G = \bigcup_{i \geq 0} G_i$ of this chain
\[
G_0 \leq G_1 \leq G_2 \leq \ldots
\]
is a countably infinite locally finite group. Formally, we have a sequence of finite groups $G_0, G_1, \ldots$ and embeddings (that is, injective homomorphisms) $\sigma_i: G_i \rightarrow G_{i+1}$ 
and the union $G = \bigcup_{i \geq 0} G_i$ is the \emph{direct limit} of the \emph{direct system} $\{G_i, \alpha_i^j \}$, where $\alpha_i^j$ is defined to be $\sigma_i \sigma_{i+1} \ldots \sigma_{j-1}$ for $i<j$. For basic concepts in group theory we refer the reader to \cite{Robinson}. In particular, the definition of the direct limit of a direct system of groups may be found in  \cite[Chapter~1, pages 22-23]{Robinson}. 
%
%
%
%
%
%

Conversely, if $G$ is a countably infinite locally finite group then by enumerating the elements of $G = \{ g_0, g_1, g_2, \ldots \}$ and considering the sequence of subgroups $H_i \; (i \in \mathbb{N})$, where $H_i$ is the subgroup of $G$ generated by $\{ g_0, g_1, \ldots, g_i\}$, it is not difficult to see that there exists a countable collection of  finite subgroups $G_i \; (i \in \mathbb{N})$ of $G$, such that $G_i \leq G_{i+1}$ for all $i$, and $G = \bigcup_{i \geq 0} G_i$. For a proof of this see \cite[Lemma~1.A.9]{KegelBook}.

When we have a sequence of finite groups $G_i \; (i \in \mathbb{N})$ and embeddings $\sigma_i:G_i \rightarrow G_{i+1}$ we shall sometimes omit specific reference to the names of the mappings $\sigma_i$ and talk about a \emph{chain of embeddings} of finite groups 
\[
G_0 \rightarrow G_1 \rightarrow G_2 \rightarrow \ldots 
\]  
and speak of the direct limit of this chain which, as discussed above, may be thought of as being the union of this countable collection of finite groups, with respect to this sequence of embeddings. In the special case that each of the groups $G_i$ is isomorphic to some finite symmetric group, we say that the direct limit is an \emph{$\cS_n$-limit group}. So an $\cS_n$-limit group is a direct limit of some chain 
\[
\cS_{i_1} \to \cS_{i_2} \to \cS_{i_3} \to \dots. 
\] 
of embeddings of finite symmetric groups where $i_1 < i_2 < i_3 < \cdots$.  
The $\cS_n$-limit groups have been well studied in the theory of infinite locally finite groups \cite{Book}. In particular they give one interesting source of examples of infinite locally finite simple groups.

In this article, we shall say that a group is \emph{universal} if it embeds every finite group. By Cayley's Theorem every finite group embeds in some finite symmetric group. From this it is easily seen that any countably infinite $\cS_n$-limit group is universal. Hall's group, which was discussed in the introduction above, is a particularly nice example of a countably infinite $\cS_n$-limit group. As explained above, Hall's group $\cU$ 
may be constructed  by iterating Cayley's Theorem. Namely, let $G_0=G$ be any finite group with at least 3 elements. Then it embeds, via the right regular representation $g\mapsto\rho_g$ where $x\rho_g=xg$ for all $x\in G$, into the symmetric group $G_1=\cS_G$, which in turn embeds into $G_2=\cS_{G_1}$, and so on. In this way, we obtain a chain of embeddings of finite symmetric groups
\[
G_0 \to G_1 \to G_2 \to \dots 
\]
and Hall's group $\cU$ is the direct limit of this chain.

A semigroup $S$ is called \emph{locally finite} if every finitely generated subsemigroup of $S$ is finite. In the same way as for groups, the direct limit of a countable chain of embeddings of finite semigroups 
\[
S_0 \rightarrow S_1 \rightarrow S_2 \rightarrow \ldots 
\] 
is a countable locally finite semigroup, and every countable locally finite semigroup arises in this way. 
Given a non-empty set $X$, the collection of all mappings from $X$ to $X$ together with the operation of composition of maps forms a semigroup called the 
\emph{full transformation semigroup} $\cT_X$ on $X$. In the case that $X = [n] = \{1,2, \ldots, n \}$ we write $\cT_n$ for $\cT_X$. 
In the special case that each of the semigroups $S_i$ in the above chain of embeddings is isomorphic to some finite full transformation semigroup, we say that the direct limit is a \emph{$\cT_n$-limit semigroup},  or alternatively a \emph{full transformation limit semigroup}.
So a full transformation limit semigroup is a direct limit of some chain
\[
\cT_{i_1} \rightarrow \cT_{i_2} \rightarrow \cT_{i_3} \rightarrow \ldots 
\] 
of embeddings of finite full transformation semigroups. We shall call a semigroup \emph{universal} if it embeds every finite semigroup. By Cayley's Theorem for finite semigroups every countably infinite full transformation limit semigroup semigroup is universal.   

%
%

The \emph{symmetric inverse semigroup} $\cI_X$ on a non-empty set $X$ consists of all partial bijections from $X$ to $X$ under composition of partial maps. In the case that $X = [n] = \{1,2, \ldots, n \}$ we write $\cI_n$ for $\cI_X$. 
Inverse semigroups obtained as direct limits of chains of embeddings of symmetric inverse semigroups 
\[
\cI_{i_1} \to \cI_{i_2} \to \cI_{i_3} \to \dots
\]
where $i_1<i_2<i_3<\dots$ will be called \emph{symmetric inverse limit semigroups}, or 
\emph{$\cI_n$-limit inverse semigroups}. Any symmetric inverse limit semigroup is locally finite. 
The Vagner--Preston Theorem \cite[Theorem 5.1.7]{Howie} implies that any finite inverse semigroup embeds in some finite symmetric inverse semigroup, and from this it follows that any symmetric inverse limit semigroup is universal, in the sense that it embeds every finite inverse semigroup.

%
%
%
%
%

%
%
%

\subsection{Homogeneous structures}\label{subsec_hom}

The main objects of interest for us in this paper are structures, both algebraic and combinatorial (relational), with a high degree of symmetry. Excellent recent surveys on the subject of homogeneous structures are \cite{Evans}, and \cite{Dugald} for relational structures, and our notation and conventions follow closely the paper \cite{Evans}.

Let $\mathcal{L}$ be a countable first-order language. Given an $\mathcal{L}$-structure $M$ we shall use $M$ to denote both the structure and its domain. We use $\Aut(M)$ to denote the automorphism group of $M$. An $\mathcal{L}$-structure $M$ is called \emph{homogeneous} if whenever $A_1, A_2 \subseteq M$ are finitely generated substructures of $M$, and $f: A_1 \rightarrow A_2$ is an isomorphism, then there is an automorphism $g \in \Aut(M)$ extending $f$. A non-empty class $\cC$ of finitely generated $\mathcal{L}$-structures is called an \emph{amalgamation class} if it is closed under isomorphisms, and has three properties called the Hereditary Property (HP), Joint Embedding Property (JEP), and the Amalgamation Property (AP); see \cite[Definition~1.3]{Evans}. We shall not repeat all of these definitions here since we shall give a generalisation of these ideas in Section~\ref{sec_ALa} below. It would be good however to recall the definition of the amalgamation property here: 
\begin{itemize}[leftmargin=1.3cm]
\item[(AP)] if $A_0, A_1, A_2 \in \cC$ and $f_1: A_0 \rightarrow A_1$ and $f_2: A_0 \rightarrow A_2$ are embeddings, then there is a $B \in \cC$ and embeddings $g_1: A_1 \rightarrow B$ and $g_2: A_2 \rightarrow B$ such that $f_1 g_1 = f_2 g_2$. 
\end{itemize}
Throughout the paper, the order of composition of functions is left-to-right. Consequently, we write functions to the right of their arguments.

The age of an $\mathcal{L}$-structure $M$, denoted $\Age(M)$, is the class of all structures isomorphic to some finitely generated substructure of $M$. \Frs theorem \cite[Theorem~1.6]{Evans} (see also \cite[page 163-4]{Hodges}) says that the age of a homogeneous structure is an amalgamation class and, conversely, if $\cC$ is an amalgamation class of countable finitely generated $\mathcal{L}$-structures, with countably many isomorphism types, then there is a countable homogeneous $\mathcal{L}$-structure $M$ with age $\cC$. The structure $M$ is determined uniquely up to isomorphism by $\cC$ and is called the \emph{\Fr limit} of $\cC$, or the \emph{generic structure} of the class $\cC$. The following 
well-known examples of \Fr limits will play an important role in this article. 
%
%
%
%
\begin{itemize}[leftmargin=0.6cm]
\item The class of finite linear orders is an amalgamation class with \Fr limit $(\bbQ,\leq)$.
%
\item The class of finite semilattices is an amalgamation class. Its \Fr limit is denoted $\Omega$. 
%
%
%
\item The class of finite bipartite graphs is an amalgamation class. Its \Fr limit is the countable \emph{random bipartite graph}. This is the unique countable bipartite graph such that each part of the bipartition is infinite, and for any two finite disjoint sets $U$ and $V$ from one part, there is a vertex $w$ in the other part such that $w$ is adjacent to every vertex of $U$ and to no vertices of $V$. The random bipartite graph is not a homogeneous graph, but is 
%
%
homogeneous in the language of bipartite graphs which has an additional binary relation symbol interpreted as the bipartition. 
\item The class of finite groups is an amalgamation class. Its \Fr limit is Philip Hall's universal locally finite group $\cU$.     
\end{itemize}

\subsection{Amalgamation of semigroups and inverse semigroups} \label{subsec_amalg}

It is well known that the class of finite semigroups does not form an amalgamation class, and this applies to the class of finite inverse semigroups as well. The first example showing this for semigroups was given in the 1957 PhD thesis of Kimura; see \cite[Section 9.4]{CP2} where Kimura's example is reproduced.  
%
%
For the class of finite inverse semigroups see \cite{H75,H87} where a simple counterexample is credited to C.\ J.\ Ash. It is then an immediate consequence of \Frs Theorem that there does not exists a countable universal locally finite homogeneous semigroup,  and there does not exist a countable universal locally finite homogeneous inverse semigroup. One is then naturally led to ask the question: How homogeneous can a countable universal locally finite semigroup be? We also have the analogous question for inverse semigroups. 


Let $S$ be a countable universal locally finite semigroup and let $T$ be a finite semigroup. If $\Aut(S)$ acts homogeneously on copies of $T$ then what restriction does this put on $T$? We know that $\Aut(S)$ cannot act homogeneously on every finite $T$, but we would like $S$ to act homogeneously on as many of its subsemigroups as possible. This can be made precise via the notion of an amalgamation base, as we now explain. 

Let $\cC$ be a class of structures of a fixed signature. An \emph{amalgam in $\cC$} consists of a triple of structures $A_0, A_1, A_2 \in \cC$ and a pair of embeddings  $f_1: A_0 \rightarrow A_1$ and $f_2: A_0 \rightarrow A_2$. Often we suppress the names of the mappings and simply talk about the amalgam  
\[
A_1 \gets A_0 \to A_2. 
\]
We call $A_0$ the \emph{base} of this amalgam. 
%
%
%
%
%
%
%
%
%
If there exists a structure $B\in\cC$ and embeddings $g_1: A_1 \rightarrow B$ and $g_2: A_2 \rightarrow B$ such that $f_1 g_1 = f_2 g_2$, then we say that the 
%
%
original amalgam $A_1 \gets A_0 \to A_2$ \emph{can be embedded into some structure from $\cC$}. In this language, the class $\cC$ has the amalgamation property if any amalgam in $\cC$ can be embedded into some structure from $\cC$. 
%
%
%
%
%
%
%
%
Furthermore, a structure $A_0$ from $\cC$ is said to be an \emph{amalgamation base for $\cC$} if every amalgam in $\cC$ with base $A_0$ 
can be embedded into some structure from $\cC$.

Now, as already remarked above, finite groups have the amalgamation property, so every finite group is an amalgamation base for the class of finite groups.  However, this fails for finite semigroups, and thus the class of amalgamation bases for finite semigroups is a proper subclass of finite semigroups. 
%
Similarly, there exist finite inverse semigroups which are not amalgamation bases for the class of finite inverse semigroups. 
Let $\cA$ denote the class of all amalgamation bases for finite inverse semigroups, and let $\cB$ be the class of all amalgamation bases for finite semigroups. 

We note in passing that there is a stronger notion of amalgamation for semigroups and inverse semigroups which has also received attention in the literature, namely, that of being a \emph{strong amalgamation base}. A finite semigroup $S$ is a strong amalgamation base for the class of finite semigroups if every amalgam $A_1 \gets S \to A_2$ of finite semigroups with base $S$ can be embedded into some finite semigroup $B$ in such a way that the intersection of the images of $A_1$ and $A_2$ in $B$ is \emph{equal to} the image of $S$ in $B$. There is an analogous definition for inverse semigroups. Of course, by definition, any strong amalgamation base is an amalgamation base. Throughout this paper we shall always work with the weaker notion of amalgamation base defined above, and never with strong amalgamation bases.  

%
%
%
%


It follows from results in \cite{H78, H87} and \cite{OP} that a 
finite inverse semigroup $S$ belongs to $\cA$ if and only if $S$ is $\J$-linear, i.e.
the set of principal ideals of $S$ form a chain under inclusion. 
%
%
%
%
In particular, the symmetric inverse semigroup $\cI_X$ belongs to $\cA$ for any finite set $X$. 

%
%

\subsection{Properties of the class $\cB$} \label{subsec_propB}

A characterisation of the finite semigroups in $\cB$ is not yet known. In
this subsection we will list some examples that are known to belong to
this class. Of particular importance to the results in this paper is that
the full transformation semigroup $\cT_n$ and its opposite
$\cT_n^{opp}$ both belong to $\cB$. We recall that for a semigroup $S$, $S^{opp}$ denotes the semigroup
defined on the set $S$ with the operation $\ast$ given by $x\ast y=yx$ for all $x,y\in S$.
This is called the \emph{opposite} semigroup of $S$. 

%

\begin{lem}\label{lem_opp}
  If $S$ is an amalgamation base for the class of all finite 
  semigroups then so is $S^{opp}$. 
\end{lem}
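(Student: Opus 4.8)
The plan is to exploit the obvious duality between a semigroup and its opposite: the map $x \mapsto x$ is an isomorphism $S \to (S^{opp})^{opp}$, and more generally a map $\phi : S \to T$ is a homomorphism if and only if the same underlying map is a homomorphism $S^{opp} \to T^{opp}$, and it is an embedding precisely when the corresponding map of opposites is an embedding. So I would first record this elementary observation: forming opposites is a functorial, involutive operation on the category of semigroups and embeddings. In particular, for any semigroup $A$ we may identify the underlying set of $A^{opp}$ with that of $A$, and $(A^{opp})^{opp} = A$.

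Next I would take an arbitrary amalgam in the class of finite semigroups with base $S^{opp}$, say $A_1 \gets S^{opp} \to A_2$, with embeddings $f_i : S^{opp} \to A_i$. Applying the opposite operation to everything, I obtain finite semigroups $A_1^{opp}, A_2^{opp}$ and embeddings $f_i : (S^{opp})^{opp} \to A_i^{opp}$, i.e.\ $f_i : S \to A_i^{opp}$, so that $A_1^{opp} \gets S \to A_2^{opp}$ is an amalgam with base $S$. Since $S$ is assumed to be an amalgamation base for the class of all finite semigroups, this amalgam embeds into some finite semigroup $B$: there are embeddings $g_i : A_i^{opp} \to B$ with $f_1 g_1 = f_2 g_2$ (composition left-to-right, as in the paper).

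Finally I would take opposites once more. The maps $g_i$ become embeddings $g_i : (A_i^{opp})^{opp} \to B^{opp}$, that is $g_i : A_i \to B^{opp}$, and the identity $f_1 g_1 = f_2 g_2$ is preserved verbatim because composition of the same underlying set maps is unaffected by passing to opposites (on underlying sets, $(\psi\chi)^{opp}$ and $\psi^{opp}\chi^{opp}$ are the same function). Hence $A_1 \gets S^{opp} \to A_2$ embeds into the finite semigroup $B^{opp}$ via $g_1, g_2$. Since the original amalgam with base $S^{opp}$ was arbitrary, $S^{opp}$ is an amalgamation base for the class of all finite semigroups.

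There is really no substantial obstacle here; the only thing to be careful about is bookkeeping with the left-to-right composition convention and the identification of underlying sets under the double-opposite, making sure that the commuting square $f_1 g_1 = f_2 g_2$ is genuinely preserved (and not, say, reversed) under the dualisation — which it is, precisely because taking opposites leaves the underlying set maps, and therefore their composites, unchanged. One should also note in passing that $B^{opp}$ is finite since $B$ is, so we stay within the class of finite semigroups throughout.
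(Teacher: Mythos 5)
Your proposal is correct and follows essentially the same route as the paper's own proof: pass to opposites to convert the amalgam over $S^{opp}$ into one over $S$, apply the hypothesis to get an embedding into a finite semigroup $W$, and then take opposites again to embed the original amalgam into $W^{opp}$. Your additional remarks about the commuting square being preserved under dualisation are a harmless elaboration of what the paper leaves implicit.
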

\begin{proof}
It is easy to see that, in general, if $\phi:A \rightarrow B$ is a homomorphism between semigroups then $\phi:A^{opp} \rightarrow B^{opp}$ is also a homomorphism. Now, consider an   
   amalgam $T\gets S^{opp}\to V$ with embeddings $f$ and 
  $g$, respectively. 
%
  Then using the same mappings    
  $T^{opp}\gets S\to V^{opp}$ is also 
  an amalgam of finite semigroups. Since $S$ is assumed to belong to 
  $\cB$, there is a finite semigroup $W$ and embeddings 
  $h:T^{opp}\to W$ and $k:V^{opp}\to W$ embedding this amalgam into 
  $W$. Then $h,k$ embed the initial amalgam into 
  the finite semigroup $W^{opp}$. Hence, $S^{opp}\in\cB$. 
\end{proof} 

The results of Shoji \cite{S16} together with this lemma show that $\cT_n$ and $\cT_n^{opp}$ belong to $\cB$. It is also known that any member of $\cB$ must be $\J$-linear \cite{HP}, and that $\cB$ includes semigroup reducts of all members of $\cA$. More generally, if $S$ is a $\J$-linear semigroup and the algebra $\mathbb{C}S$ is semisimple then $S\in\cB$; see \cite{OP}.  In particular any $\J$-linear finite inverse semigroup, and so any finite group, belongs to $\cB$. Throughout the paper we will make repeated use of the fact that the semigroups mentioned in this paragraph all belong to the class $\cB$. 
On the other hand, not all $\J$-linear finite semigroups belong to $\cB$: for example, it was proved in \cite{HS} that a finite completely simple semigroup belongs to $\cB$ if and only if it is a group.

\subsection{Semigroup theory notation and definitions} \label{subsec_semi}

For general background in semigroup theory we refer the reader to \cite{Howie}. 
If $S$ is a semigroup we write $T \leq S$ to mean that $T$ is a subsemigroup of $S$, and write $T < S$ 
to mean that $T \leq S$ and $T \neq S$. Green's relations are an important tool for studying the ideal structure of semigroups. 
%
%
%
Given a semigroup $S$, we
define for $a,b\in S$:
$$
a\;\R\; b \Leftrightarrow aS^1=bS^1, \quad a\;\L\; b \Leftrightarrow S^1a=S^1b,\quad a\;\J\; b \Leftrightarrow
S^1aS^1=S^1bS^1,
$$
where $S^1$ denotes $S$ with an identity element adjoined, unless $S$ already has one.  Furthermore, we let $\H=\R\cap\L$ and $\D=\R\circ\L$, and remark that $\D$ is the join of the equivalence relations $\R$ and $\L$ because it may be shown that $\R\circ\L=\L\circ\R$. In general $\H$ is a subset of both $\R$ and $\L$, while $\R$ and $\L$ are both subsets of $\D$, which is in turn a subset of $\J$. In general the relations $\D$ and $\J$ are distinct, but for periodic semigroups they coincide; see \cite[Proposition~2.1.4]{Howie}. 
Recall that a semigroup $S$ is called periodic if for every $s \in S$ there are natural numbers $m$ and $n$ such that $s^{m+n} = s^m$. Clearly every finite semigroup, and every locally finite semigroup, is periodic.   
In particular $\J = \D$ in every locally finite semigroup. The $\R$-class of an element $a$ is denoted by $R_a$, and in a similar fashion we use the notation $L_a,J_a,H_a$ and $D_a$. An $\H$-class $H_a$ of a semigroup is a group if and only if it contains an idempotent. The group $\H$-classes are exactly the maximal subgroups of the semigroup. 

%

In situations where there is more than one semigroup under consideration we shall sometimes use the notation 
%
%
$K_a^S$ and $\mathscr{K}^S$ for $K\in\{R,L,J,H,D\}$ and $\mathscr{K}\in\{\R,\L,\J,\H,\D\}$, to specify that we are taking the relation, or equivalence class, in the semigroup $S$. 

The inclusion relation between principal ideals naturally gives rise to quasi-order relations $\leq_\mathscr{K}$
on a semigroup $S$ for $\mathscr{K}\in\{\R,\L,\J\}$: for example, we write $a\leq_\R b$ if $aS^1\subseteq bS^1$,
and similarly we define $\leq_\L$ and $\leq_\J$. Also, for two $\mathscr{K}$-classes $C$ and $C'$ we write $C\leq_\mathscr{K} C'$
%
%
if $a\leq_\mathscr{K} b$ for some  $a\in C$ and $b\in C'$.

Green's relations in in the full transformation semigroup $\cT_X$ are easy to characterise (see e.g.\ \cite[Exercise 2.6.16]{Howie}).
For $f,g\in\cT_X$ we have:
$f\;\R\; g$ if and only if $\ker(f)=\ker(g)$;
$f\;\L\; g$ if and only if $\im(f)=\im(g)$; and
$f\;\J\; g$ if and only if 
$f\;\D\; g$
if and only if 
$\rank(f)=\rank(g)$.
%
%
Here $\rank(f)=|\im(f)|$, and $\ker(f)$ is the equivalence relation on $[n]$ where $(i,j) \in \ker(f)$ if and only if $if = jf$. Consequently, $f$ and $g$ belong to the same $\H$-class if and only if both their kernels and images coincide. Let $X$ be a non-empty set and choose and fix some $\kappa\leq|X|$. Let $D_{\kappa}$ denote the $\D$-class in $\cT_X$ of all transformations of rank $\kappa$. The $\R$-classes of $D_{\kappa}$ are then indexed by the set of partitions of $X$ into $\kappa$ non-empty parts, while the $\L$-classes are indexed by the set of subsets of $X$ of cardinality $\kappa$. Given a partition $P$ with $\kappa$ non-empty parts, and a subset $A$ of $X$ of cardinality $\kappa$, we shall use $H_{P,A}$ to denote the $\H$-class given by intersecting the corresponding $\R$- and $\L$-classes of $D_{\kappa}$.   
%
%
%
%
%
%
%
%
It is well known, and easy to prove, that the $\H$-class $H_{P,A}$ is a group if and only if the set $A$ is a transversal of the partition $P$, that is, there is exactly one element from the set $A$ in each part of the partition $P$. 
We shall write $A \perp P$ to denote that $A$ is a transversal of $P$.  
%
%
%
%
If $A\perp P$ then $H_{P,A}$ is isomorphic to the symmetric group $\cS_A$. 

An element $a$ of a semigroup $S$ is \emph{regular} if there exists $b\in S$ such that $aba=a$. A semigroup $S$ is regular if all of its elements are regular. The full transformation semigroup $\cT_X$ is an example of a regular semigroup. We say that an element $a'$ is an \emph{inverse} of an element $a$ in a semigroup $S$ if both 
$aa'a=a$ and $a'aa'=a'$. It may be shown that a semigroup is regular if and only if every element has at least one inverse. An \emph{inverse semigroup} is a semigroup in which each element has exactly one inverse. Inverse semigroups are most naturally viewed as algebraic structures in the (2,1)-signature $\{\cdot,\hbox{}^{-1}\}$, where $x^{-1}$ denotes the unique inverse of $x$. Thus, we have $xx^{-1}x=x$ and $x^{-1}xx^{-1}=x^{-1}$. In addition, it may be shown that $^{-1}$ is an involution satisfying $(xy)^{-1}=y^{-1}x^{-1}$, and that the set of all idempotents of an inverse semigroup form a commutative subsemigroup; 
%
%
see \cite[Section 5]{Howie}.
%
%
An inverse subsemigroup $T$ of an inverse semigroup $S$ is a subsemigroup $T$ of $S$ which is closed under taking inverses. This is equivalent to saying that $T$ is a substructure of $S$ in the (2,1)-signature.

The Vagner--Preston Theorem \cite[Theorem 5.1.7]{Howie} shows that any inverse semigroup is a subsemigroup of some symmetric inverse semigroup. In more detail, if $S$ is an inverse semigroup, the map from $S$ to $\cI_S$ which sends each $x \in S$ to the partial bijection $\rho_x: Sx^{-1} \rightarrow Sx$, where $t \rho_x = tx$ for all $t \in Sx^{-1}$, gives an embedding of $S$ into $\cI_S$.  

 %
%
%
%
%
%
%
%
%
%

In the symmetric inverse semigroup $\cI_X$, the inverse $\alpha^{-1}$ of the element $\alpha$ is the inverse  of the mapping $\alpha$, in the usual sense. 
The $\L$-, $\D$- and $\J$-relations in $\cI_X$ are just the same as in $\cT_X$,
while we have $f\;\R\; g$ if and only if $f$ and $g$ have the same domain, which we write as $\dom(f)=\dom(g)$. Hence, for $H_f$ to be a group we must have $\dom(f)=
\im(f)$ from which it follows that $H_f$ is isomorphic to the symmetric group on $\dom(f)$. 

A $\J$-class $J$ in an arbitrary semigroup $S$ gives rise to the associated \emph{principal factor} $J^\ast = J \cup \{0\}$, with multiplication:
\[
a \cdot b = \begin{cases}
ab & \mbox{if $a$, $b$, $ab \in J$} \\
0 & \mbox{otherwise.}
\end{cases}
\] 
It is known that $J^\ast$ is either a $0$-simple semigroup or a
semigroup with zero multiplication; see \cite[Theorem
3.1.6]{Howie}. Under some additional finiteness hypotheses, such as
being periodic, a regular $0$-simple principal factor
$J^\ast$ will be completely $0$-simple
%
%
in which case the Rees Theorem \cite[Theorem 3.2.3]{Howie} states that it
will be isomorphic to a \emph{Rees matrix semigroup}
$\mathcal{M}^0[G;I,\Lambda;P]$. See \cite[Subsection 3.2]{Howie} for more details on completely $0$-simple semigroups and 
the Rees matrix semigroup construction. 
We use $B(I,G)$ to denote the Rees matrix semigroup $\mathcal{M}^0[G;I,I;P]$ where $P$ is the $I \times I$ identity matrix. This is called a \emph{Brandt semigroup} over $G$. It is a completely $0$-simple inverse semigroup, and every completely $0$-simple inverse semigroup arises in this way; see \cite[Theorem 5.1.8]{Howie}. 

%

\subsection{Graphs, posets, and semilattices} \label{subsec_gr-pos-sl}

We view graphs as structures with a single symmetric irreflexive binary relation, denoted by $\sim$. If $\Gamma$ is a graph we use $V\Gamma$ to denote its vertices and $E\Gamma$ its set of edges which are the 2-sets $\{u,v\}$ such that $u \sim v$ and $v \sim u$. 

A semigroup is called a \emph{semilattice} if it is commutative and all of its elements are idempotents. A meet-semilattice is a poset $(P,\leq)$ such that any pair of elements $x,y \in P$ has a well-defined greatest lower bound $x \wedge y$. These two definitions are equivalent: if a semigroup $S$ is a semilattice then the partial order $\leq$ given by $e \leq f$ if and only if $ef=fe=e$ is a meet semilattice, and conversely given a semilattice $(P,\leq)$ the semigroup $(P, \wedge)$ is a commutative semigroup of idempotents; see \cite[Proposition~1.3.2]{Howie}.    

\subsection{Combinatorial structures in semigroups} \label{subsec_comb-struct}

Let $D$ be a $\D$-class of a semigroup $S$. Since $\D=\R\circ\L=\L\circ\R$, we can label the $\R$-classes and the 
$\L$-classes contained in $D$ by index sets $I$ and $\Lambda$, respectively, so that each $\H$-class contained in $D$ is of the form 
$
H_{i\lambda} = R_i\cap L_\lambda.
$
To record the distribution and structure of the idempotents of $S$ within $D$, following \cite{Graham,Houghton} we define a bipartite graph associated with $D$, denoted $GH(D)$, called the \emph{Graham--Houghton graph} of $D$. The graph $GH(D)$ is defined to be the bipartite graph with vertex set the disjoint union $I \cupdot \Lambda$, where $I$ and $\Lambda$ are the two parts of the bipartition, and where 
$i \in I$ is adjacent to $\lambda \in \Lambda$ if and only if  
%
%
%
$H_{i\lambda}$ is a group. Equivalently, $i$ and $\lambda$ are adjacent in $GH(D)$ if and only if $R_i\cap L_{\lambda}$ contains an idempotent. 


Semilattices arise naturally within inverse semigroups. The set of idempotents $E(S)$ of an inverse semigroup $S$ is a commutative subsemgrioup of $S$, and therefore is a semilattice, which we shall call the \emph{semilattice of idempotents of the inverse semigroup $S$}.





\section{Universal maximally homogeneous semigroups}\label{sec_UMHS}

As discussed above, there is no countable universal locally finite
homogeneous semigroup, and there is no such inverse semigroup
either. The results in this section will describe the 
%
%
maximum
degree of homogeneity that can be possessed by a countable universal
locally finite semigroup or inverse semigroup.

Let $S$ be a semigroup and let $T$ be a subsemigroup of $S$. Recall from above that we say that $\Aut(S)$ acts homogeneously on copies of $T$ if for any subsemigroups $T_1, T_2 \leq S$, if $T_1 \cong T \cong T_2$ then every isomorphism $\phi: T_1 \rightarrow T_2$ extends to an automorphism of $S$. 

\begin{pro}\label{prop_most_hom_T}
  Let $S$ be a countable universal locally finite semigroup and let
  $T$ be a finite semigroup. If $\Aut(S)$ acts homogeneously on copies
  of $T$, then $T$ belongs to the class $\cB$ of all amalgamation bases
  for finite semigroups. 
\end{pro}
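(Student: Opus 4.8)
The plan is to verify directly that $T$ is an amalgamation base for the class $\cC$ of all finite semigroups; by the definition of $\cB$ this is precisely what must be shown. So I would start from an arbitrary amalgam $A_1 \gets T \to A_2$ of finite semigroups with base $T$, say with embeddings $f_1 : T \to A_1$ and $f_2 : T \to A_2$, and aim to embed it into some finite semigroup.

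First I would use the universality of $S$: since $A_1$ and $A_2$ are finite, there are embeddings $\iota_1 : A_1 \to S$ and $\iota_2 : A_2 \to S$. Composing with $f_1$ and $f_2$, the subsemigroups $T_1 = \im(f_1\iota_1)$ and $T_2 = \im(f_2\iota_2)$ of $S$ are each isomorphic to $T$ (the maps $f_1\iota_1$ and $f_2\iota_2$ being injective), and there is a well-defined isomorphism $\phi : T_1 \to T_2$, namely the composite $(f_1\iota_1)^{-1}(f_2\iota_2)$, characterised by $(t f_1\iota_1)\phi = t f_2\iota_2$ for all $t \in T$.

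Next I would invoke the hypothesis: since $\Aut(S)$ acts homogeneously on copies of $T$ and $T_1 \cong T \cong T_2$, the isomorphism $\phi$ extends to some $g \in \Aut(S)$. The effect of applying $g$ is to bring the two copies of $T$ inside $S$ into coincidence in a way compatible with the amalgam: for every $t \in T$,
\[
t f_1\iota_1 g = (t f_1\iota_1)g = (t f_1\iota_1)\phi = t f_2\iota_2,
\]
so $f_1(\iota_1 g) = f_2\iota_2$ as homomorphisms from $T$ into $S$. Letting $W$ be the subsemigroup of $S$ generated by the finite set $\im(\iota_1 g)\cup\im(\iota_2)$, local finiteness of $S$ makes $W$ a finite semigroup, and $\iota_1 g : A_1 \to W$ and $\iota_2 : A_2 \to W$ are embeddings with $f_1(\iota_1 g) = f_2\iota_2$. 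Thus the amalgam $A_1 \gets T \to A_2$ embeds into the finite semigroup $W$, as required, so $T \in \cB$.

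The argument is essentially formal and I do not anticipate a genuine obstacle; the only points that need care are the bookkeeping forced by the left-to-right composition convention, verifying that $\phi$ really is an isomorphism between two subsemigroups of $S$ (and not merely a partial map between subsets), and the appeal to local finiteness of $S$ to guarantee that the generated subsemigroup $W$ is actually finite rather than merely a subsemigroup.
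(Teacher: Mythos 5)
Your proof is correct and follows essentially the same route as the paper's: embed both sides of the amalgam into $S$ via universality, use the hypothesis to extend the induced isomorphism between the two copies of $T$ to an automorphism, and take $W$ to be the subsemigroup generated by the (translated) images. Your explicit appeal to local finiteness to ensure $W$ is finite is a point the paper leaves implicit, but otherwise the arguments coincide.
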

\begin{proof}
Let $f_1: T \rightarrow U_1$ and $f_2: T \rightarrow U_2$ be embeddings where $U_1$, $U_2$ are finite semigroups. Since $S$ is universal there are embeddings $g_1: U_1 \rightarrow S$ and $g_2: U_2 \rightarrow S$. Set $V_1 = U_1g_1$ and $V_2 = U_2g_2$. Since $\Aut(S)$ acts homogeneously on copies of $T$ it follows that the isomorphism $g_1^{-1}f_1^{-1}f_2g_2  :Tf_1g_1 \rightarrow Tf_2g_2$ extends to an automorphism $\alpha \in \Aut(S)$. Let $W$ be the subsemigroup of $S$ generated by $V_1 \alpha \cup V_2$. Then the mappings $g_1\alpha: U_1 \rightarrow W$ and $g_2: U_2 \rightarrow W$ complete the amalgamation diagram, and the proof.  
\end{proof}

%
%

Note that the universality hypothesis is necessary in this proposition. For example, the universal countable homogeneous semilattice $\Omega$ is a countable locally finite semigroup, while $\Aut(\Omega)$ acts homogeneously on all finite semilattices, some of which are not $\J$-linear and thus do not belong to the class $\cB$. 


%


%

The following analogue for inverse semigroups may be shown in a similar way.

\begin{pro}\label{prop_most_hom_I}
  Let $I$ be a countable universal locally finite inverse semigroup
  and let $T$ be a finite inverse semigroup. If $\Aut(I)$ acts
  homogeneously on copies of $T$, then $T$ belongs to the class $\cA$
  of all amalgamation bases for finite inverse semigroups, that is,
  $T$ is $\J$-linear.
\end{pro}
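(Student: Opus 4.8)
The plan is to imitate the proof of Proposition~\ref{prop_most_hom_T} almost line for line, taking care at each step to remain inside the category of inverse semigroups (the $(2,1)$-signature), and then to invoke the characterisation of $\cA$ recalled above -- namely that a finite inverse semigroup is an amalgamation base for the class of finite inverse semigroups if and only if it is $\J$-linear, which follows from results in \cite{H78, H87} and \cite{OP}. So the real content is to show that $T$ is an amalgamation base for the class of finite inverse semigroups.

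First I would fix an arbitrary amalgam $U_1 \gets T \to U_2$ of finite inverse semigroups with base $T$, given by embeddings $f_1: T \rightarrow U_1$ and $f_2: T \rightarrow U_2$. Since $I$ is universal it embeds every finite inverse semigroup, so we may choose embeddings $g_1: U_1 \rightarrow I$ and $g_2: U_2 \rightarrow I$. Here one notes that any homomorphism of semigroups sends an inverse of $x$ to an inverse of its image, and inverses are unique in an inverse semigroup, so $g_1, g_2$ automatically preserve the unary operation $^{-1}$, whence $V_1 := U_1 g_1$ and $V_2 := U_2 g_2$ are inverse subsemigroups of $I$. For the same reason $Tf_1g_1$ and $Tf_2g_2$ are subsemigroups of $I$ that are abstractly inverse semigroups and hence closed under the ambient inverse operation, i.e.\ they are genuine inverse subsemigroups, so the map $\theta := g_1^{-1}f_1^{-1}f_2g_2: Tf_1g_1 \rightarrow Tf_2g_2$ really is an isomorphism between copies of $T$ inside $I$. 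By the hypothesis that $\Aut(I)$ acts homogeneously on copies of $T$, this $\theta$ extends to an automorphism $\alpha \in \Aut(I)$.

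Next I would let $W$ be the inverse subsemigroup of $I$ generated by $V_1\alpha \cup V_2$. Since $I$ is locally finite and $V_1\alpha \cup V_2$ is a finite set, $W$ is a finite inverse semigroup. The maps $g_1\alpha: U_1 \rightarrow W$ and $g_2: U_2 \rightarrow W$ are embeddings, and restricting to $T$ we have $f_1(g_1\alpha) = (f_1g_1)\theta = f_1g_1g_1^{-1}f_1^{-1}f_2g_2 = f_2g_2$, so $g_1\alpha$ and $g_2$ complete the amalgamation square. Hence the amalgam $U_1 \gets T \to U_2$ embeds into the finite inverse semigroup $W$. Since the amalgam was arbitrary, $T$ is an amalgamation base for the class of finite inverse semigroups, that is, $T \in \cA$, and by the cited characterisation $T$ is $\J$-linear.

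I do not anticipate a genuine obstacle here; the proof is a routine transfer of the semigroup argument to the inverse setting. The only points that need a (brief) word of justification -- and which are the places where the two proofs actually differ -- are that universality of $I$ yields true inverse-semigroup embeddings, that a subsemigroup of $I$ which happens to be abstractly an inverse semigroup is automatically closed under the ambient inverse operation (so that the notion ``acts homogeneously on copies of $T$'' applies to $\theta$ and produces $\alpha$), and that local finiteness guarantees that the inverse subsemigroup $W$ generated by a finite set is again finite.
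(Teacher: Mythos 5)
Your proof is correct and is exactly the argument the paper intends: the paper states only that Proposition~\ref{prop_most_hom_I} ``may be shown in a similar way'' to Proposition~\ref{prop_most_hom_T}, and your proposal carries out that transfer faithfully, with the right supplementary observations (homomorphisms between inverse semigroups preserve $^{-1}$, abstract inverse subsemigroups are closed under the ambient inverse, and local finiteness makes $W$ finite). No gaps.
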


We call a universal locally finite semigroup $S$ \emph{maximally homogeneous}, or \emph{$\cB$-homogeneous},  if $\Aut(S)$ it acts homogeneously on copies of $T$ for every $T$ in $\cB$. Similarly we talk about universal locally finite inverse semigroups which are maximally homogeneous, also called $\cA$-homogeneous, meaning their automorphism group acts homogeneously on copies of $T$ for all $T$ in $\cA$.  

We would like to identify universal locally finite maximally homogeneous semigroups, and inverse semigroups, and study their properties. The most natural class of universal locally finite semigroups is given by full transformation limit semigroups, defined in Subsection~\ref{subsec_hall}. 
%
%
%
%
%
%
%
Our first main result shows that maximally homogeneous semigroups exist, and in the class of full transformation limit semigroups there is a unique example up to isomorphism. 

\begin{thm}\label{main_sem}
There is a unique maximally homogeneous full transformation limit semigroup. 
\end{thm}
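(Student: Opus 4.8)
The plan is to build the semigroup $\cT$ via a Hrushovski-style amalgamation, with the privileged class being the amalgamation bases $\cB$, and then to check it is a $\cT_n$-limit. First I would set up the ambient category: objects are finite semigroups, but the distinguished ``strong'' embeddings $A \leq_* B$ are those where $A \in \cB$ (and, to make the notion of strong substructure behave well, where the inclusion is suitably compatible with later amalgamation — this is the technical heart and will be dictated by the general framework in \cite[Section~3]{Evans} or \cite{Droste1993}). One then verifies that $\cB$-embeddings have the joint embedding property and the amalgamation property: given $A_1 \hookleftarrow A_0 \hookrightarrow A_2$ with $A_0 \in \cB$, the fact that $A_0$ is an amalgamation base for \emph{all} finite semigroups yields a finite semigroup $B$ completing the square, and by Cayley's theorem we may further embed $B$ into a finite $\cT_n$; crucially $\cT_n \in \cB$ by Shoji's results together with Lemma~\ref{lem_opp}, so the amalgam can be completed \emph{inside the distinguished class}. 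This is what forces the limit to simultaneously be a $\cT_n$-limit and to be $\cB$-homogeneous.

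Next I would run the generic-construction machinery to obtain a countable semigroup $\cT = \bigcup_{i} \cT_{n_i}$, presented as a direct limit of a chain of finite full transformation semigroups, with the genericity (extension) property: whenever $A \leq_* \cT$ with $A \in \cB$ and $A \leq_* B$ is a $\cB$-embedding of finite semigroups, there is a $\cB$-embedding of $B$ into $\cT$ extending the inclusion of $A$. The usual back-and-forth argument over countably many ``extension tasks'' gives existence; one must check along the way that every finite subsemigroup of $\cT$ that lies in $\cB$ is in fact a strong (distinguished) substructure — this is where the chosen notion of strong embedding must be robust, and is the main obstacle I anticipate: one needs the class of $\cB$-members appearing as $\leq_*$-substructures of the chain to be closed under the relevant operations and to witness homogeneity, rather than merely embeddings. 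Having this, $\cB$-homogeneity of $\cT$ follows by the standard back-and-forth: any isomorphism between two copies $T_1 \cong T_2$ of a $T \in \cB$ inside $\cT$ is extended step by step to an automorphism, using the extension property at each stage.

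Finally, uniqueness: if $S$ and $S'$ are two maximally homogeneous full transformation limit semigroups, write each as an increasing union of finite $\cT_{n_i}$'s and run a back-and-forth between them. At each step we have a finite $\cT_n \leq S$ (which is in $\cB$) and an isomorphic copy of it inside $S'$; we must extend the partial isomorphism to absorb the next $\cT_{m}$ in the chain of $S'$. Embed the relevant amalgam of finite semigroups into a common finite semigroup, then into a finite $\cT_k$ (again using $\cT_k \in \cB$), use universality of $S$ to place a copy of $\cT_k$ in $S$, and use $\cB$-homogeneity of $S$ (acting on copies of $\cT_n \in \cB$) to align it with the current partial map; symmetrically for $S'$. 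Iterating builds an isomorphism $S \to S'$. The point that makes this work, and the step I'd flag as needing the most care, is ensuring at every stage that the finite subsemigroups being matched genuinely lie in $\cB$ and that the extensions produced are again of the required ``full transformation'' shape — i.e.\ that the closure of the class $\cB$ under the amalgamation steps stays inside the $\cT_n$-limit world. Once that bookkeeping is in place, existence and uniqueness both drop out of the generic/back-and-forth formalism.
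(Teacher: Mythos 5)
Your proposal follows essentially the same route as the paper: take $\cB$ with the subsemigroup relation as the distinguished embeddings, verify the amalgamation property by completing amalgams over a base in $\cB$ and then embedding into some $\cT_n\in\cB$ via Cayley's Theorem, invoke the generic-structure theorem (Theorem~\ref{genFr}) for existence, uniqueness and $\cB$-homogeneity, and finally interleave the resulting chain of members of $\cB$ with copies of finite full transformation semigroups (the paper's Lemma~\ref{lem_semLem}) to exhibit $\cT$ as a $\cT_n$-limit. The only cosmetic difference is that you anticipate needing extra compatibility conditions on the strong embeddings, whereas the paper simply restricts the subsemigroup relation to $\cB$ and checks (N1)--(N3) directly.
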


This result will be proved in Section~\ref{sec_ALa}. 
%
%
We call the semigroup in this theorem \emph{the} maximally homogeneous full transformation limit semigroup, and denote it by $\cT$. 
For inverse semigroups we have the following analogous result. 
%
%
%
\begin{thm}\label{main_inv}
There is a unique maximally homogeneous symmetric inverse limit semigroup. 
\end{thm}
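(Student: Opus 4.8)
The plan is to run the Hrushovski-style generalisation of \Frs construction for inverse semigroups, in exactly the way that Theorem~\ref{main_sem} handles the transformation-semigroup case, and then transfer the existence/uniqueness machinery. First I would fix the ambient framework: the objects are finite inverse semigroups, and the privileged class of embeddings is the class of embeddings $A \hookrightarrow B$ into symmetric inverse semigroups (or, more precisely, embeddings that arise in a chain $\cI_{i_1} \to \cI_{i_2} \to \cdots$). Following \cite[Section~3]{Evans} and \cite[Section~2.4]{Dugald}, one needs to verify that this setup has the Hereditary Property, the Joint Embedding Property, and the Amalgamation Property \emph{restricted to bases in $\cA$} — that is, every amalgam $A_1 \gets A_0 \to A_2$ of finite inverse semigroups with $A_0 \in \cA$ can be embedded into a finite inverse semigroup, and moreover into one that re-embeds into a symmetric inverse semigroup. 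The Amalgamation Property with base in $\cA$ is precisely the content of the characterisation quoted in Subsection~\ref{subsec_amalg}: $A_0$ is an amalgamation base for finite inverse semigroups if and only if $A_0$ is $\J$-linear. Since $\cI_n \in \cA$ for every $n$, symmetric inverse semigroups are themselves legitimate bases, so the chain-of-$\cI_n$ constraint is compatible with amalgamating over members of $\cA$.

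Next I would construct $\cI$ as the direct limit of a chain $\cI_{i_1} \to \cI_{i_2} \to \cdots$ obtained by a back-and-forth/enumeration argument: at each stage we have a finite symmetric inverse semigroup $\cI_{i_k}$ already built, together with a bookkeeping list of (i) all amalgamation tasks $T_1 \gets T_0 \to T_2$ with $T_0 \in \cA$ and $T_1$ already sitting inside $\cI_{i_k}$, and (ii) all isomorphism tasks between copies of members of $\cA$ inside $\cI_{i_k}$; we resolve the next outstanding task by first amalgamating inside the category of finite inverse semigroups (possible because the base is $\J$-linear), then applying the Vagner--Preston Theorem \cite[Theorem~5.1.7]{Howie} to embed the result into a larger $\cI_{i_{k+1}}$. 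Standard diagonalisation ensures every task is eventually handled. The resulting $\cI$ is then a symmetric inverse limit semigroup, is universal (every finite inverse semigroup embeds in some $\cI_n$ by Vagner--Preston, hence into $\cI$), is locally finite, and — by the way the isomorphism tasks were scheduled — has $\Aut(\cI)$ acting homogeneously on copies of every $T \in \cA$; combined with Proposition~\ref{prop_most_hom_I} this says $\cI$ is maximally homogeneous and that $\cA$ is the largest class on which homogeneity can hold.

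For uniqueness I would use a back-and-forth argument between two maximally homogeneous symmetric inverse limit semigroups $\cI$ and $\cI'$. Write $\cI = \bigcup_k \cI_{i_k}$ and $\cI' = \bigcup_k \cI'_{j_k}$. Starting from the trivial (or one-element) inverse subsemigroup, one alternately extends a given isomorphism between a finite subsemigroup $A \le \cI$ and $A' \le \cI'$: pick the next $\cI_{i_k}$; then $A \cup \cI_{i_k}$ generates a finite inverse subsemigroup $B$ of $\cI$; since $\cI_{i_k} \in \cA$ is an amalgamation base, the amalgam $B \gets A \to A'$ embeds into a finite inverse semigroup, which Vagner--Preston embeds into some $\cI'_{j_\ell}$; maximal homogeneity of $\cI'$ (applied to the copy of $A'$, which is $\cA$-type because… — see the caveat below) lets us realise this embedding by an automorphism of $\cI'$, thereby extending the isomorphism to include $B$. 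Symmetrically go back. The union of the chain of partial isomorphisms is an isomorphism $\cI \to \cI'$.

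The main obstacle — and the step I would spend the most care on — is the bookkeeping needed to guarantee that every finite subsemigroup encountered in the back-and-forth, and in particular the "target" subsemigroup one wants homogeneity to move, can be taken to lie inside some $\cI_{i_k}$, i.e.\ to be an \emph{amalgamation base} ($\J$-linear), since maximal homogeneity only supplies automorphisms extending isomorphisms between copies of members of $\cA$. The fix is to never work with an arbitrary finite $A \le \cI$ but always to fatten it up to $A \cup \cI_{i_k}$ for a large enough $k$ with $A \subseteq \cI_{i_k}$: such a subsemigroup is $\cI_{i_k}$ itself, hence $\J$-linear, hence in $\cA$. Making this "fattening" compatible on both sides of the back-and-forth simultaneously — ensuring that at each stage the partial isomorphism already in hand has domain and range equal to honest symmetric-inverse-semigroup levels $\cI_{i_k}$ and $\cI'_{j_k}$ — is the delicate part, and is exactly analogous to the corresponding point in the proof of Theorem~\ref{main_sem}, so I would organise the two proofs in parallel and quote the common lemmas (Vagner--Preston, the $\J$-linearity characterisation of $\cA$, and Proposition~\ref{prop_most_hom_I}) as needed.
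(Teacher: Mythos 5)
Your proposal is correct and is essentially the paper's argument: both run the Hrushovski-style construction over the class $\cA$ of $\J$-linear finite inverse semigroups, using the Vagner--Preston Theorem to supply the JEP and to close off each amalgamation step inside a symmetric inverse semigroup, and both derive uniqueness from the resulting $\cA$-homogeneity. The only difference is organisational: the paper verifies that $(\cA,\sqsubseteq)$ is an amalgamation class and invokes Theorem~\ref{genFr} as a black box, then converts the chain of members of $\cA$ into a chain of $\cI_n$'s via a separate lemma (Lemma~\ref{lem_invLem}), whereas you unfold the bookkeeping/back-and-forth by hand and build the $\cI_n$-chain directly, which amounts to the same thing.
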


This result will also be proved in Section~\ref{sec_ALa}. We call the inverse semigroup in this theorem the maximally homogeneous symmetric inverse limit semigroup and denote it by $\cI$.

\begin{rmk}\label{rem_careful}
  If $S$ is a homogeneous inverse semigroup then it is obvious that
  its semilattice of idempotents $E(S)$ is a homogeneous
  semilattice. It is important to stress that $\cI$ is not a
  homogeneous inverse semigroup, so it is not immediate that $E(\cI)$
  is homogeneous. Moreover, it is not possible to prove that $E(\cI)$ is
  homogeneous simply by considering the action of $\Aut(\cI)$ on
  $E(\cI)$. Indeed, since the only semilattices in $\cA$ are chains, and since $\cI$ is universal 
  for finite inverse semigroups 
  and thus in particular embeds all finite semilattices, it
  follows from Proposition~\ref{prop_most_hom_I} that $\Aut(\cI)$ does not
  act homogeneously on all of its finite subsemilattices. Similarly, the action of $\Aut(\cT)$ on $E(\cT)$ will certainly not
give a proof that the Graham--Houghton graphs of $\cT$ are
homogeneous. Indeed, $\Aut(\cT)$ will not act homogeneously on copies
of the 2-element left zero semigroup $L_2$ inside a given $\D$-class
since $L_2$ is not in $\cB$. This is because the only finite
completely simple semigroups that belong to $\cB$ are finite groups
\cite{HS}. 
The action of $\Aut(\cT)$ induces an action on the set of idempotents which 
by restricting to a particular $\D$-class gives an action of $\Aut(\cT)$ on the Graham--Houghton graph. 
In terms of this action, the above observation says that $\Aut(\cT)$ does not 
act two-arc transitively on the Graham--Houghton graph. 
In fact, even homogeneous semigroups can have
Graham--Houghton graphs which are not homogenous. Indeed\footnote{We
  thank Thomas Quinn--Gregson of the University of York for bringing
  this example to our attention.} if $S$ is the combinatorial
completely $0$-simple semigroup represented as a Rees matrix semigroup
with structure matrix
$$
P = \begin{pmatrix} 1 & 1 & 0 & 0 \\ 0 & 0 & 1
    & 1  \end{pmatrix}
$$
then $S$ is homogeneous, but its Graham--Houghton graph is the
disjoint union of two copies of the complete bipartite graph $K_{1,2}$
which is not a homogeneous bipartite graph. 
\end{rmk}


\section{\Fr amalgamation and the proofs of the \\ existence and uniqueness of $\cT$ and $\cI$}
\label{sec_ALa}

In this section we shall make use of a generalisation of \Frs Theorem called the Hrushovski construction. 
We follow the description of this method given in \cite[Section~3]{Evans}.
We work with a class $\mathcal{K}$ of finite $\mathcal{L}$-structures and a 
distinguished class of substructures $A\sqsubseteq B$, which is expressed by saying `$A$ is a nice substructure of $B$'.
If $B \in \mathcal{K}$ then an embedding $f: A \rightarrow B$ is called a $\sqsubseteq$-embedding if $f(A) \sqsubseteq B$. 
%
%
%
%
%
%
%
%
%
%
%
%
We shall assume that $\sqsubseteq$ satisfies the following conditions:
\begin{itemize}
\item[(N1)] If $A\in\mathcal{K}$ then $A\sqsubseteq A$ (so isomorphisms are $\sqsubseteq$-embeddings).
\item[(N2)] If $A\sqsubseteq B\sqsubseteq C$ for $A,B,C\in\mathcal{K}$ then $A\sqsubseteq C$ (so, if $f:A\to B$
and $g:B\to C$ are $\sqsubseteq$-embeddings then $fg:A\to C$ is also a $\sqsubseteq$-embedding).
\end{itemize}
Note that whether an embedding $f:A\to B$ is a $\sqsubseteq$-embedding just depends on the substructure induced on $f(A)$.  
%
%
In particular, if $f:A\to B$ is a $\sqsubseteq$-embedding then so is $\alpha f:A\to B$ for any $\alpha\in\Aut(A)$. 
%
%
%
%
%
%
%
We say that $(\mathcal{K},\sqsubseteq)$ is an \emph{amalgamation class} if:
\begin{itemize}
\item $\mathcal{K}$ is closed under isomorphisms, has countably many isomorphism ty\-pes, and countably many 
embeddings between any two members of $\mathcal{K}$;
\item $\mathcal{K}$ is closed under $\sqsubseteq$-substructures;
\item $\mathcal{K}$ has the JEP for $\sqsubseteq$-embeddings: if $A_1,A_2\in\mathcal{K}$ then there exists 
$B\in\mathcal{K}$ and $\sqsubseteq$-embeddings $f_i:A_i\to B$ ($i=1,2$);
\item $\mathcal{K}$ has the AP for $\sqsubseteq$-embeddings: if $A_0,A_1,A_2\in\mathcal{K}$ and $f_1:A_0\to A_1$
and $f_2:A_0\to A_2$ are $\sqsubseteq$-embeddings then there exists $B\in\mathcal{K}$ and $\sqsubseteq$-embeddings
$g_i:A_i\to B$ ($i=1,2$) with $f_1g_1=f_2g_2$. 
\end{itemize}
%
%
%
Now it will be useful to extend the notion of a nice substructure to certain countable structures. Suppose $M$
is a countable $\mathcal{L}$-structure such that there are finite substructures $M_i$ of $M$ ($i\in\N$) with
\[
M_1\sqsubseteq M_2\sqsubseteq M_3\sqsubseteq \dots
\quad \mbox{and} \quad 
M=\bigcup_{i\in\N}M_i.
\]
For a finite $A\leq M$ we define $A\sqsubseteq M$
if 
$A\sqsubseteq M_i$ for some $i\in\N$. This does not depend on the choice of substructures $M_i$ above provided the following condition holds:
\begin{itemize}
\item[(N3)] Let $A\sqsubseteq B \in\mathcal{K}$ and $A\subseteq C\subseteq B$ with $C\in\mathcal{K}$. Then
$A\sqsubseteq C$.
\end{itemize}

\begin{thm}[Theorem 3.2 in \cite{Evans}] \label{genFr}
Suppose $(\mathcal{K},\sqsubseteq)$ is an amalgamation class of finite $\mathcal{L}$-structures and $\sqsubseteq$
satisfies (N1) and (N2). Then there is a countable $\mathcal{L}$-structure $M$ and finite substructures $M_i\in
\mathcal{K}$ ($i\in\N$) such that
\begin{enumerate}
\item $M_1\sqsubseteq M_2\sqsubseteq M_3\sqsubseteq \dots$ and $M=\bigcup_{i\in\N}M_i$;
\item every $A\in\mathcal{K}$ is isomorphic to a $\sqsubseteq$-substructure of $M$; 
\item (Extension property) if $A\sqsubseteq M$ is finite and $f:A\to B\in\mathcal{K}$ is a 
$\sqsubseteq$-embedding then there is a $\sqsubseteq$-embedding $g:B\to M$ such that $afg=a$ for all $a\in A$.
\end{enumerate}
Moreover, $M$ is determined up to isomorphism by these properties and if $A_1,A_2\sqsubseteq M$ and $h:A_1\to A_2$
is an isomorphism then $h$ extends to an automorphism of $M$ (which can be taken to preserve $\sqsubseteq$).
\end{thm}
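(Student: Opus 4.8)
The plan is to prove Theorem~\ref{genFr} by the standard direct-limit-with-bookkeeping construction, followed by a back-and-forth argument for uniqueness and homogeneity; this is the usual proof of the Hrushovski--\Fr theorem, and we follow \cite[Section~3]{Evans}. First I would exploit the hypothesis that $\mathcal{K}$ has only countably many isomorphism types and only countably many embeddings between any two of its members to fix a bookkeeping enumeration which lists, each infinitely often, (i) every isomorphism type occurring in $\mathcal{K}$, and (ii) every pair $(A,f)$ with $A\in\mathcal{K}$ and $f\colon A\to B$ a $\sqsubseteq$-embedding into some $B\in\mathcal{K}$. Then I build a chain $M_0\sqsubseteq M_1\sqsubseteq\dots$ in $\mathcal{K}$: start with any $M_0\in\mathcal{K}$; given $M_i$, if the bookkeeping points to an isomorphism type $A$, use the JEP for $\sqsubseteq$-embeddings to pick $M_{i+1}\in\mathcal{K}$ admitting $\sqsubseteq$-embeddings of both $M_i$ and $A$, identifying $M_i$ with its image so that $M_i\sqsubseteq M_{i+1}$ (legitimate by (N1), (N2)); if instead it points to a pair $(A,f\colon A\to B)$ with $A\sqsubseteq M_i$, so that the inclusion $A\hookrightarrow M_i$ is a $\sqsubseteq$-embedding, apply the AP for $\sqsubseteq$-embeddings to the span $M_i\hookleftarrow A\xrightarrow{f}B$ to obtain $M_{i+1}\in\mathcal{K}$ with $\sqsubseteq$-embeddings $M_i\to M_{i+1}$ and $g_B\colon B\to M_{i+1}$ agreeing on $A$, again identifying along the first; otherwise set $M_{i+1}=M_i$. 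Put $M=\bigcup_i M_i$ and declare $A\sqsubseteq M$ iff $A\sqsubseteq M_i$ for some $i$. Part (1) of the theorem is then immediate.

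For (2): if $A\in\mathcal{K}$, its isomorphism type is handled at some stage, producing a $\sqsubseteq$-copy of $A$ inside some $M_{i+1}$, which is a $\sqsubseteq$-substructure of $M$ by (N2). For (3): if $A\sqsubseteq M$ is finite and $f\colon A\to B\in\mathcal{K}$ is a $\sqsubseteq$-embedding, then $A\sqsubseteq M_i$ for some $i$; the pair $(A,f)$ is handled at some stage $j\geq i$ (using $A\sqsubseteq M_j$, which follows from (N2)), producing a $\sqsubseteq$-embedding $g_B\colon B\to M_{j+1}\subseteq M$ with $afg_B=a$ for all $a\in A$, and post-composing with the inclusion $M_{j+1}\hookrightarrow M$ gives the required $g$.

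For uniqueness and homogeneity I would run a back-and-forth. Given two structures $M,M'$ with chains $(M_i),(M'_i)$ as in the statement, construct an increasing chain of finite partial isomorphisms $h_n\colon A_n\to A'_n$ with $A_n\sqsubseteq M$, $A'_n\sqsubseteq M'$, forcing at alternate stages a prescribed element of $M$ into the domain or of $M'$ into the range, and also arranging that each $M_i$ eventually occurs as a domain and each $M'_i$ as a range. To absorb an element $m\in M$: choose $i$ with $A_n\cup\{m\}\subseteq M_i$ and $A_n\sqsubseteq M_i$ (possible by (N2)); then $h_n^{-1}$, viewed as a $\sqsubseteq$-embedding $A'_n\to M_i\in\mathcal{K}$ (its image $A_n$ being $\sqsubseteq M_i$, and $\sqsubseteq$-ness depending only on the induced substructure), extends by property (3) applied inside $M'$ to a $\sqsubseteq$-embedding $g\colon M_i\to M'$ with $a'h_n^{-1}g=a'$ for all $a'\in A'_n$; take $h_{n+1}=g$, which has $m$ in its domain, restricts to $h_n$ on $A_n$, and has $M_i\sqsubseteq M$ and $g(M_i)\sqsubseteq M'$. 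The range steps are symmetric, and $h=\bigcup_n h_n$ is an isomorphism $M\to M'$. For homogeneity, apply this with $M'=M$ and $h_0$ the given isomorphism $A_1\to A_2$; since each $M_i$ eventually appears as a domain stage, $h$ restricts to a $\sqsubseteq$-embedding on every $M_i$, so (using that isomorphisms preserve $\sqsubseteq$, together with (N2)) $h$ maps every finite $\sqsubseteq$-substructure of $M$ to a $\sqsubseteq$-substructure of $M$, i.e.\ the resulting automorphism preserves $\sqsubseteq$.

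The only genuinely delicate point is the bookkeeping: one must set up the enumeration so that every isomorphism type of $\mathcal{K}$, and every $\sqsubseteq$-embedding out of every structure that eventually becomes a $\sqsubseteq$-substructure of $M$ (noting that new such substructures keep appearing as the chain grows), is attended to while the construction stays well-founded. Once that is arranged, the rest is a routine chase through (N1), (N2), the JEP, and the AP.
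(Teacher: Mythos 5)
The paper does not prove this result---it is quoted verbatim as Theorem~3.2 of Evans's notes \cite{Evans}---so there is no in-paper argument to compare against; your proof is the standard generic-structure construction (a bookkeeping chain built via the JEP and AP for $\sqsubseteq$-embeddings, followed by a back-and-forth for uniqueness and $\sqsubseteq$-homogeneity), which is exactly how the cited source establishes it. Your argument is essentially correct, the only details left to the reader being the dovetailed enumeration you explicitly flag and the quiet use of (N3) (needed in any case for $\sqsubseteq$ on the infinite structure $M$ to be well defined) when transferring $\sqsubseteq$-substructures along the partial isomorphisms.
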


We call this latter property \emph{$\sqsubseteq$-homogeneity} and $M$ is the \emph{generic structure} of the class $(\mathcal{K},\sqsubseteq)$. As in \Frs original theorem, this result also has a converse; see \cite[Theorem 3.3]{Evans}.

\subsection{Proof of Theorem~\ref{main_inv}} 
In this section we shall apply the above theorem to prove Theorem~\ref{main_inv}. For Theorem~\ref{main_sem} we just give a sketch of how it may be proved using the same general approach. 
%

\begin{dfn}
For $S,T\in\cA$ write $S\sqsubseteq T$ if and only if $S$ is an inverse subsemigroup of $T$.
\end{dfn}

Note that if $S\sqsubseteq T$ applies then necessarily $S$ and $T$ are both $\J$-linear. 


\begin{lem}
$(\cA,\sqsubseteq)$ is an amalgamation class and $\sqsubseteq$ satisfies (N1) and (N2).
\end{lem}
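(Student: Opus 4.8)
The plan is to verify the conditions from the Hrushovski-style framework one at a time for the pair $(\cA,\sqsubseteq)$. Conditions (N1) and (N2) are essentially immediate: (N1) holds because the identity map on $S\in\cA$ is an inverse-subsemigroup embedding, and (N2) holds because a composite of inverse-subsemigroup inclusions is an inverse-subsemigroup inclusion (the property of being closed under products and inverses is transitive). I would dispatch these in a sentence. The cardinality conditions for an amalgamation class are also routine: up to isomorphism there are only countably many finite inverse semigroups, and between any two there are only finitely many maps, hence only finitely (in particular countably) many embeddings.

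The substantive points are: (i) $\cA$ is closed under $\sqsubseteq$-substructures, i.e.\ an inverse subsemigroup of a $\J$-linear finite inverse semigroup is again $\J$-linear; and (ii) $\cA$ has the JEP and AP for $\sqsubseteq$-embeddings. For (i) I would invoke the characterisation recalled earlier in the paper (following from \cite{H78, H87, OP}) that a finite inverse semigroup lies in $\cA$ precisely when it is $\J$-linear, and then argue directly that $\J$-linearity is inherited by inverse subsemigroups: if $T$ is $\J$-linear and $S$ is an inverse subsemigroup of $T$, then for $a,b\in S$ the principal ideals $T^1aT^1$ and $T^1bT^1$ are comparable, and since $S^1aS^1\subseteq T^1aT^1\cap S$ one can pull this comparability back to $S$ — more precisely, $a\leq_{\J}^{T}b$ together with regularity lets one write $a=ebf$ with $e,f$ idempotents of $T$ that can be replaced by idempotents of $S$ (using that $a=aa^{-1}\,a\,a^{-1}a$ and idempotents of an inverse subsemigroup form a subsemilattice), giving $a\leq_{\J}^{S}b$. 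So the principal ideals of $S$ form a chain. For the JEP, given $S_1,S_2\in\cA$ the natural candidate is a suitable symmetric inverse semigroup: by Vagner--Preston each $S_i$ embeds in some $\cI_{n_i}$, and one takes $\cI_n$ with $n$ large; since each $\cI_n\in\cA$ (it is $\J$-linear — its principal ideals are linearly ordered by rank) this works, provided the embeddings can be arranged to be $\sqsubseteq$-embeddings, which here just means inverse-subsemigroup embeddings, and the Vagner--Preston embedding is exactly such.

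The main obstacle, and the heart of the lemma, is the Amalgamation Property for $\sqsubseteq$-embeddings. Here I would appeal to the results of Hall \cite{H78} (and \cite{H87, OP}) establishing that every $\J$-linear finite inverse semigroup is an amalgamation base for the class of finite inverse semigroups: given $\sqsubseteq$-embeddings $f_i:A_0\to A_i$ with $A_0,A_1,A_2\in\cA$ (so all three are $\J$-linear), amalgamation-base-hood of $A_0$ yields a finite inverse semigroup $B$ and embeddings $g_i:A_i\to B$ with $f_1g_1=f_2g_2$. The issue is that $B$ need not itself be $\J$-linear, i.e.\ need not lie in $\cA$, and the $g_i$ need not be $\sqsubseteq$-embeddings. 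I would resolve this by a second application of Vagner--Preston: embed $B$ into some $\cI_m\in\cA$ and compose, so that the amalgam is realised inside $\cI_m$; the composite embeddings $A_i\to\cI_m$ are inverse-subsemigroup embeddings, hence $\sqsubseteq$-embeddings, and they still agree on $A_0$. This reduces everything to the known fact that $\J$-linear finite inverse semigroups are amalgamation bases, which is the one genuinely hard input — but it is quoted from the literature, so within this paper the proof is an assembly of that fact with Vagner--Preston and the hereditary behaviour of $\J$-linearity.
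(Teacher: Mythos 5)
Your treatment of (N1), (N2), the cardinality conditions, the JEP (Vagner--Preston into a large symmetric inverse semigroup) and, most importantly, the AP (invoke the amalgamation-base property of $A_0\in\cA$ to get a finite inverse semigroup $B$, then apply Vagner--Preston once more to land in $\cI_B\in\cA$ so that the composite embeddings are $\sqsubseteq$-embeddings) is exactly the paper's argument. The one genuinely hard input --- that $\J$-linear finite inverse semigroups are amalgamation bases --- is quoted from the literature in both cases.

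However, your step (i) is wrong: $\J$-linearity is \emph{not} inherited by inverse subsemigroups. The semilattice of idempotents $E(\cI_n)$ is an inverse subsemigroup of the $\J$-linear semigroup $\cI_n$, and for $n\geq 2$ it is isomorphic to the power-set semilattice $(\mathcal{P}([n]),\cap)$, which is not a chain and hence not $\J$-linear (already $\{\id_{\{1\}},\id_{\{2\}},\id_{\es}\}\leq\cI_2$ is a copy of the non-chain $3$-element semilattice). This is consistent with the paper's own Remark~\ref{rem_careful}, which observes that non-chain semilattices lie outside $\cA$ even though $\cI$ embeds them all. The specific place your sketched argument breaks is the passage from $a\leq_{\J}^{T}b$ to $a\leq_{\J}^{S}b$: the former gives $a=sbt$ with $s,t\in T^1$ arbitrary, and the manipulation $a=aa^{-1}(sbt)a^{-1}a$ leaves the factors $aa^{-1}s$ and $ta^{-1}a$ outside $S$, so you cannot trade them for idempotents of $S$. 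The condition you were trying to verify nevertheless holds, but for a different (trivial) reason: the paper defines $S\sqsubseteq T$ only \emph{for $S,T\in\cA$}, so $A\sqsubseteq B$ already presupposes $A\in\cA$ and closure under $\sqsubseteq$-substructures is vacuous. If instead one took $\sqsubseteq$ to mean ``inverse subsemigroup'' between arbitrary finite inverse semigroups, as your reading implicitly does, the closure condition would genuinely fail and the class would not be an amalgamation class in the required sense.
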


\begin{proof}
Since $\sqsubseteq$ is just the restriction of $\leq$ to the members of $\cA$ (in the (2,1)-signature),
(N1) holds trivially, and (N2) is immediate. Also, since the structures in $\cA$ are finite and the 
language $\mathcal{L}$ is finite, $\cA$ is closed under isomorphisms, contains only countably many isomorphism types and
countably many embeddings between any pair of elements of $\cA$. 
We have already seen that, vacuously, $\cA$ is 
closed under $\sqsubseteq$.

To see that $(\cA,\sqsubseteq)$ is an amalgamation class, we need to verify that it satisfies the JEP and the AP
with respect to nice embeddings. Indeed, given $A_1,A_2\in\cA$, by applying the Vagner--Preston Theorem we obtain
embeddings $f_i:A_i\to\cI_{A_i}$, $i=1,2$. Furthermore, let $g_i:\cI_{A_i}\to\cI_{A_1\cupdot A_2}$ be the
obvious natural embeddings; so $A_i$ ($i=1,2$) embeds into $B=\cI_{A_1\cupdot A_2}$ via $f_ig_i$. These are 
$\sqsubseteq$-embeddings because $\cI_{A_1\cupdot A_2}\in\cA$.

To complete the proof, we must verify that $\cA$ has the amalgamation property for $\sqsubseteq$-em\-bed\-dings. Let
$A_0,A_1,A_2\in\cA$ and $f_1:A_0\to A_1$, and $f_2:A_0\to A_2$ be $\sqsubseteq$-embeddings. Then since $A_0\in\cA$
there is a finite inverse semigroup $B$ and embeddings $g_i:A_i\to B$ ($i=1,2$) such that $f_1g_1=f_2g_2$. Now by
the Vagner--Preston Theorem there is an embedding $h:B\to\cI_B$. Then $g_ih:A_i\to\cI_B$ are $\sqsubseteq$-embeddings since
$A_i\in\cA$ by assumption and $\cI_B$ belongs to $\cA$. 
\end{proof}

Combining the above lemma with Theorem \ref{genFr} gives a countable (2,1)-algebra $\cI$ and finite inverse semigroups $P_i\in\cA$ ($i\in\N$) such that $P_1\sqsubseteq P_2\sqsubseteq \dots$ and $\cI=\bigcup_{i\in\N}P_i$. This implies $\cI$ is an inverse semigroup; furthermore, it is universal for the class of finite inverse semigroups. Moreover, it also follows from Theorem \ref{genFr} that $\cI$ is $\cA$-homogeneous, and is the unique countable $\cA$-homogeneous inverse semigroup which can be written as such a union of members $P_i$ of $\cA$. To complete the proof of Theorem~\ref{main_inv} we need to show that $\cI$ is an $\cI_n$-limit inverse semigroup by converting the chain $P_1\sqsubseteq P_2\sqsubseteq \dots$ into a chain of finite symmetric inverse semigroups.

\begin{lem}\label{lem_invLem}
Let $S$ be an $\cA$-homogeneous inverse semigroup. The following are equivalent:
\begin{itemize}
\item[(i)] $S$ is universal for finite inverse semigroups and there are finite inverse subsemigroups $S_i\in\cA$ ($i\in\N$)
such that $S_1<S_2<\dots$ and $S=\bigcup_{i\in\N}S_i$;
\item[(ii)] $S$ is an $\cI_n$-limit inverse semigroup, i.e.\ there are inverse subsemigroups $T_i$ of $S$ ($i\in\N$)
such that $T_1<T_2<\dots$ and $S=\bigcup_{i\in\N}T_i$ where each
$T_i\cong\cI_{n_i}$ for some $n_i\in\N$. 
\end{itemize}
\end{lem}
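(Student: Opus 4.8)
The plan is to prove the equivalence by establishing the two implications, with the bulk of the work lying in $(i) \Rightarrow (ii)$. The direction $(ii) \Rightarrow (i)$ is essentially immediate: if $S = \bigcup_{i} T_i$ with $T_i \cong \cI_{n_i}$ and $T_1 < T_2 < \cdots$, then each $T_i$ is a symmetric inverse semigroup and hence $\J$-linear, so $T_i \in \cA$, which gives the chain of members of $\cA$ required in $(i)$; universality for finite inverse semigroups follows from the Vagner--Preston Theorem (every finite inverse semigroup embeds in some $\cI_n$, and $\cI_n \cong T_i \leq S$ for $n_i \geq n$, using that the $n_i$ are unbounded since the chain is strictly increasing). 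So the real content is the forward direction.

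For $(i) \Rightarrow (ii)$, assume $S = \bigcup_i S_i$ with $S_i \in \cA$, $S_i < S_{i+1}$, and $S$ universal. The strategy is to build, interleaved with the $S_i$, a chain of symmetric inverse semigroups sitting inside $S$. I would proceed inductively: having found an inverse subsemigroup $T_i \leq S$ with $T_i \cong \cI_{n_i}$ and (arranging the bookkeeping) $S_i \leq T_i$, I want to find $T_{i+1} \leq S$ with $T_{i+1} \cong \cI_{n_{i+1}}$ and $T_i < T_{i+1}$ and $S_{i+1} \leq T_{i+1}$. The key tool is the extension property / $\cA$-homogeneity of $S$ from Theorem~\ref{genFr}. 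Concretely, let $U$ be a finite inverse subsemigroup of $S$ containing both $T_i$ and $S_{i+1}$ (take $U = \langle T_i \cup S_{i+1} \rangle$, finite by local finiteness). By the Vagner--Preston Theorem $U$ embeds into $\cI_U$, and we can choose $n_{i+1}$ large enough that this realises a $\sqsubseteq$-embedding $f : U \to \cI_{n_{i+1}}$ (here $\cI_{n_{i+1}} \in \cA$ since symmetric inverse semigroups are $\J$-linear). We need $U \sqsubseteq S$, i.e. $U$ should be a nice substructure — but since $\sqsubseteq$ on $\cA$ is just the inverse subsemigroup relation, every finite inverse subsemigroup of $S$ that lies in $\cA$ is automatically $\sqsubseteq S$; we just have to check $U \in \cA$, i.e. that $U$ is $\J$-linear. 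This is the point requiring care.

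The main obstacle, then, is ensuring $\J$-linearity of the intermediate semigroups $U = \langle T_i \cup S_{i+1}\rangle$: the join of two $\J$-linear subsemigroups of $S$ need not obviously be $\J$-linear, even though both sit inside the ambient $\cA$-homogeneous $S$. I would resolve this by not forming the join directly in $S$ but instead using the extension property more carefully: treat $S_{i+1} \sqsubseteq S$ (which holds since $S_{i+1} \in \cA$), take a $\sqsubseteq$-embedding $f : S_{i+1} \to \cI_{n}$ for suitable large $n$ via Vagner--Preston, and apply the extension property of Theorem~\ref{genFr} to get a $\sqsubseteq$-embedding $g : \cI_n \to S$ with $sfg = s$ for all $s \in S_{i+1}$; then $\cI_n g$ is a copy of $\cI_n$ inside $S$ containing $S_{i+1}$. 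To also capture $T_i$ and to get strict inclusion $T_i < T_{i+1}$, I would first enlarge: choose $n$ so that there is a single $\sqsubseteq$-embedding of $\langle T_i \cup S_{i+1}\rangle$'s ``data'' — more robustly, apply $\cA$-homogeneity to the copy of $\cI_{n_i}$ realised as $T_i$ to move a fixed external chain $\cI_{n_i} < \cI_{n_{i+1}}$ into $S$ extending $T_i$, then absorb $S_{i+1}$ by a further application. The bookkeeping is: start the induction by choosing $T_1 \cong \cI_{n_1}$ to be any symmetric inverse subsemigroup of $S$ containing $S_1$ (exists by the extension property applied to $S_1 \sqsubseteq S$), and at each stage ensure $S_{i+1} \subseteq T_{i+1}$ so that $\bigcup_i T_i \supseteq \bigcup_i S_i = S$, while $T_{i+1} \subseteq S$ gives the reverse inclusion. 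Strictness of $T_i < T_{i+1}$ is arranged by always taking $n_{i+1} > n_i$. I expect the write-up to hinge on carefully invoking the extension property to realise the abstract chain $\cI_{n_1} < \cI_{n_2} < \cdots$ inside $S$ while simultaneously swallowing the given exhausting chain $(S_i)$.
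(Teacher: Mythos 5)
Your direction (ii)$\Rightarrow$(i) is fine and matches the paper. In (i)$\Rightarrow$(ii) you correctly identify the crux --- a copy of some $\cI_m$ inside $S$ containing both $T_i$ and $S_{i+1}$ cannot be produced by embedding the join $\langle T_i\cup S_{i+1}\rangle$, since that join need not be $\J$-linear and so need not lie in $\cA$ --- but your proposed workaround does not actually get around it. After you use homogeneity (or the extension property) to extend $T_i$ to some $T_{i+1}\cong\cI_{n_{i+1}}$, the instruction ``absorb $S_{i+1}$ by a further application'' re-poses exactly the same problem: to put $S_{i+1}$ and $T_{i+1}$ inside a common symmetric inverse subsemigroup you would again need to handle $\langle T_{i+1}\cup S_{i+1}\rangle$, and no application of homogeneity is exhibited that does this. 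As written, the induction cannot be closed.

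The paper's proof avoids joins entirely by an interleaving argument. For each $i$ it produces a finite $U_i\leq S$ with $S_i\leq U_i\cong\cI_{S_i}$: embed $S_i$ into $\cI_{S_i}$ by Vagner--Preston, use universality to embed $\cI_{S_i}$ into $S$ with image $V_i$ containing a copy $S_i'$ of $S_i$, and then use $\cA$-homogeneity to find an automorphism carrying $S_i'$ onto $S_i$, setting $U_i=V_i\alpha$. Then, since each $U_i$ is finite and the $S_k$ exhaust $S$, there is a subsequence $i_0<i_1<\cdots$ with $S_{i_0}\leq U_{i_0}\leq S_{i_1}\leq U_{i_1}\leq\cdots$, so $S=\bigcup_j U_{i_j}$ and the $U_{i_j}$ give the required chain. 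The point is that one never asks a single $\cI_m$-copy to contain a prescribed pair $\{T_i,S_{i+1}\}$; instead each finite stage is swallowed by a later $S_k$, which is in turn swallowed by $U_k$. This observation --- each finite $U_j$ lies in some later $S_k$ because the $S_k$ form an increasing exhaustion --- is the missing ingredient, and once inserted your construction goes through. (A smaller point: the lemma concerns an arbitrary $\cA$-homogeneous $S$, not only the generic structure, so it is cleaner to argue via an automorphism supplied by $\cA$-homogeneity, as the paper does, than to cite the extension property of Theorem~\ref{genFr} directly.)
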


\begin{proof}
(ii)$\Ra$(i): It follows from the Vagner--Preston Theorem that $S=\bigcup_{i\in\N}T_i$ is universal for finite inverse 
semigroups, whence (i) is achieved since each $T_i\cong\cI_{n_i}$ 
%
belongs to $\cA$.

(i)$\Ra$(ii): We claim that for each $i\in\N$ there is an inverse subsemigroup $U_i$ of $S$ such that $U_i$
contains $S_i$ as an inverse subsemigroup and $U_i\cong\cI_{S_i}$. To see this, first embed $S_i$ into $\cI_{S_i}$
by the Vagner--Preston Theorem. By universality of $S$ there is an embedding $\cI_{S_i}\to S$; let $V_i$ be the image of
$\cI_{S_i}$ under this embedding. Then $V_i$ is an inverse subsemigroup of $S$ which in turn contains a 
subsemigroup $S'_i$ such that $S'_i\cong S_i$. Since $S_i\in\cA$ we can apply $\cA$-homogeneity to get an
automorphism $\alpha$ of $S$ such that $S'_i\alpha= S_i$, and then set $U_i=V_i\alpha$, completing the proof
of the claim. 

Now since each $U_i$ is finite and $S=\bigcup_{i\in\N}S_i$ there exist $1=i_0<i_1<i_2<\dots$ such that
$$
S_1=S_{i_0}\leq U_{i_0}\leq S_{i_1}\leq U_{i_1}\leq S_{i_2}\leq U_{i_2}\leq \dots
$$
Hence, we have $U_{i_0}\leq U_{i_1}\leq\dots$ and $S=\bigcup_{j\in\N_0}U_{i_j}$, where $U_{i_j}\cong 
\cI_{|S_{i_j}|}$. The proof is now completed by setting 
%
%
$T_j=U_{i_{j-1}}$ for all $j\geq 1$.
\end{proof}

This lemma tells us that $\cI$ is indeed an $\cI_n$-limit inverse semigroup, which completes the existence part
of Theorem \ref{main_inv}. Uniqueness now also follows from Theorem \ref{genFr}. Indeed, the conditions (1) and (2) of that theorem are clearly satisfied. The extension property (3) holds as a consequence of the assumption of $\cA$-homogeneity. This completes the proof of Theorem~\ref{main_inv}. 

\subsection{Proof of Theorem~\ref{main_sem}}

The proof of Theorem~\ref{main_sem} is similar to the proof of Theorem~\ref{main_inv}. For $S, T \in \cB$ we write $S \sqsubseteq T$ if and only if $S$ is a subsemigroup of $T$. Then using Cayley's Theorem for semigroups, and the definition of $\cB$, it may be seen that $(\cB,\sqsubseteq)$ is an amalgamation class and $\sqsubseteq$ satisfies (N1) and (N2). Applying Theorem~\ref{genFr} gives a countable universal locally finite semigroup $\cT$ which is $\cB$-homogeneous, that is, it is maximally homogeneous. Moreover, there is a sequence of finite semigroups $P_i \in \cB$ $(i \in \mathbb{N})$ such that $P_1 \sqsubseteq P_2 \sqsubseteq \ldots $ and $\cT = \bigcup_{i \in \mathbb{N}} P_i$. The proof of Theorem~\ref{main_sem} is then completed by the following lemma which is proved in the same way as Lemma~\ref{lem_invLem} but with $\cT_n$ in place of $\cI_n$, and Cayley's Theorem for semigroups applied instead of the Vagner--Preston Theorem. Also, for the (ii)$\Ra$(i) direction of the proof of the following result we need to appeal to the fact that for all $n$ the full transformation semigroup $\cT_n$ belongs to $\cB$. 


\begin{lem}\label{lem_semLem}
Let $S$ be a $\cB$-homogeneous semigroup. The following are equivalent:
\begin{itemize}
\item[(i)] $S$ is universal for finite semigroups and there are finite subsemigroups $S_i\in\cB$ ($i\in\N$)
such that $S_1<S_2<\dots$ and $S=\bigcup_{i\in\N}S_i$;
\item[(ii)] $S$ is a $\cT_n$-limit semigroup, i.e.\ there are subsemigroups $T_i$ of $S$ ($i\in\N$)
such that $T_1<T_2<\dots$ and $S=\bigcup_{i\in\N}T_i$ where each $T_i\cong\cT_{n_i}$ for some $n_i\in\N$.
\end{itemize}
\end{lem}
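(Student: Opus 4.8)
The plan is to prove Lemma~\ref{lem_semLem} by the same back-and-forth argument used for Lemma~\ref{lem_invLem}, simply replacing $\cI_n$ by $\cT_n$, and the Vagner--Preston Theorem by Cayley's Theorem for semigroups (\cite[Theorem~1.1.2]{Howie}), which states that every finite semigroup $S$ embeds into $\cT_S$ (or into $\cT_{S^1}$ if one prefers the right regular representation into a faithful one). The only genuinely new ingredient over the inverse-semigroup case is that, for the implication (ii)$\Ra$(i), one must know that $\cT_n\in\cB$ for all $n$; this is exactly the fact recorded in Subsection~\ref{subsec_propB} (Shoji's results \cite{S16} together with Lemma~\ref{lem_opp}).

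For (ii)$\Ra$(i): if $S=\bigcup_{i\in\N}T_i$ with $T_i\cong\cT_{n_i}$ and $T_1<T_2<\cdots$, then by Cayley's Theorem every finite semigroup embeds in some $\cT_n$, hence (taking $n$ large enough that $\cT_n$ embeds in some $T_i$) in $S$, so $S$ is universal for finite semigroups; and each $T_i\cong\cT_{n_i}$ lies in $\cB$, so we may take $S_i=T_i$. For (i)$\Ra$(ii): first I would establish the claim that for each $i\in\N$ there is a subsemigroup $U_i\leq S$ with $S_i\leq U_i$ and $U_i\cong\cT_{|S_i|}$ (or $\cT_{|S_i^1|}$). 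To get this, embed $S_i$ into $\cT_{S_i}$ by Cayley's Theorem; by universality of $S$ there is an embedding $\cT_{S_i}\to S$ with image $V_i$; then $V_i$ contains a copy $S_i'\cong S_i$, and since $S_i\in\cB$ we may apply $\cB$-homogeneity of $S$ to find $\alpha\in\Aut(S)$ with $S_i'\alpha=S_i$, and set $U_i=V_i\alpha$. Then $S_i\leq U_i$ and $U_i\cong\cT_{S_i}$, proving the claim.

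Now interleave the two chains. Since each $U_i$ is finite and $S=\bigcup_{i\in\N}S_i$, we can choose $1=i_0<i_1<i_2<\cdots$ with
\[
S_1=S_{i_0}\leq U_{i_0}\leq S_{i_1}\leq U_{i_1}\leq S_{i_2}\leq U_{i_2}\leq\cdots,
\]
so that $U_{i_0}\leq U_{i_1}\leq\cdots$ and $S=\bigcup_{j\in\N_0}U_{i_j}$, where $U_{i_j}\cong\cT_{|S_{i_j}|}$ (or $\cT_{|S_{i_j}^1|}$). Setting $T_j=U_{i_{j-1}}$ for $j\geq1$ gives a strictly increasing chain of subsemigroups of $S$, each isomorphic to a finite full transformation semigroup, whose union is $S$; that is, $S$ is a $\cT_n$-limit semigroup, as required.

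I do not expect any serious obstacle: the argument is a routine transcription of the proof of Lemma~\ref{lem_invLem}. The one point requiring slight care is purely bookkeeping about \emph{which} transformation semigroup appears, since Cayley's Theorem for semigroups most cleanly gives an embedding of $S$ into $\cT_{S^1}$ rather than $\cT_S$ (a monoid needs the adjoined identity to act faithfully by right translations); this only changes the index $n_i$ from $|S_i|$ to $|S_i^1|$ and is harmless. The slightly less trivial external fact being invoked is $\cT_n\in\cB$ for all $n$, used in (ii)$\Ra$(i), but this has already been established, and indeed the surrounding text explicitly signals that this is where it is needed.
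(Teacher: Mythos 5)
Your proposal is correct and coincides with the paper's own treatment: the paper proves this lemma exactly by transcribing the proof of Lemma~\ref{lem_invLem}, replacing $\cI_n$ by $\cT_n$ and the Vagner--Preston Theorem by Cayley's Theorem, and it flags the same single new ingredient you identify, namely that $\cT_n\in\cB$ is needed for the (ii)$\Rightarrow$(i) direction. Your remark about $\cT_{S^1}$ versus $\cT_S$ is a harmless bookkeeping point that does not affect the argument.
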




\section{Homogeneous structures within the inverse semigroup $\cI$}


The aim of this section is to prove the following result which 
gives several structural properties of the semigroup $\cI$.

\begin{thm} \label{prop_i} 
Let $\cI$ be the maximally homogeneous symmetric inverse limit semigroup. Then 
\begin{enumerate}
\item $\cI$ is locally finite and universal for finite inverse semigroups.
\item $\cI/\J$ is a chain with order type $\mathbb{Q}$.
\item Every maximal subgroup is isomorphic to Hall's group $\cU$.
\item The semilattice of idempotents $E(\cI)$ is isomorphic to the universal countable homogeneous semilattice.
\item $\J=\D$ and all principal factors are isomorphic to the Brandt semigroup $B(\N,\cU)$.
\end{enumerate}
\end{thm}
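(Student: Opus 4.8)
The plan is to establish the five assertions in order, using $\cA$-homogeneity together with the concrete realization of $\cI$ as a union of a chain $\cI_{n_1}\hookrightarrow\cI_{n_2}\hookrightarrow\cdots$ of finite symmetric inverse semigroups. Part (1) is immediate: it is just a restatement of what was proved in Section~\ref{sec_ALa} (local finiteness comes from being an $\cI_n$-limit, universality from the Vagner--Preston Theorem). Part (2): since $\cI$ is an inverse limit of $\J$-linear semigroups and $\J$-linearity is preserved under unions of chains of inverse subsemigroups, $\cI/\J$ is a chain; it is countable and has no endpoints because each $\cI_{n}$ sits inside $\cI_{n+1}$ strictly increasing the rank range, and it is dense because given two $\J$-classes one can use universality to embed a finite inverse semigroup (e.g.\ an $\cI_m$) producing an intermediate rank, then use $\cA$-homogeneity to place it appropriately between the two given classes; by Cantor's theorem the order type is $\mathbb{Q}$.

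For parts (3)--(5) the key observation is a \emph{local uniformity} statement: any two idempotents $e,f\in\cI$ with $eS^{1}=fS^{1}$ being $\J$-related (indeed any two $\D$-related idempotents, or more generally any two finite isomorphic inverse subsemigroups lying in the same position) are interchanged by an automorphism, because $\cA$-homogeneity applies to the $\J$-linear subsemigroups they generate. To prove (3), fix an idempotent $e\in\cI$ and let $G=H_{e}$ be its maximal subgroup. I would show $G\cong\cU$ by verifying that $G$ is a countable $\cS_{n}$-limit group which is $\cS_n$-homogeneous in the sense characterizing Hall's group: given finite subgroups $K_1,K_2\leq G$ and an isomorphism $\phi\colon K_1\to K_2$, the subsemigroups $\langle K_1,e\rangle$ and $\langle K_2,e\rangle$ are finite inverse semigroups in $\cA$ (they are $\J$-linear, being essentially $K_i$ with a zero adjoined inside the local structure), $\phi$ extends to an isomorphism between them fixing $e$, and $\cA$-homogeneity extends this to $\alpha\in\Aut(\cI)$; since $\alpha$ fixes $e$ it restricts to an automorphism of $H_e$ carrying $K_1$ to $K_2$ and inducing $\phi$. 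That $G$ is an $\cS_n$-limit: $e$ lies in some $P_i\cong\cI_{n_i}$, and $H_e^{\cI}=\bigcup_i H_e^{P_i}$ where each $H_e^{\cI_{n_i}}$ is a finite symmetric group on $\dom(e)$, with genuine inclusions once $n_i$ grows (using universality to force the domain of the ambient $e$ to enlarge). By Hall's uniqueness theorem, $G\cong\cU$.

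Part (4): $E(\cI)$ is a countable semilattice; I would verify it is the generic (Fra\"\i ss\'e limit) semilattice $\Omega$ by checking homogeneity and the extension property at the level of semilattices. Given finite subsemilattices $F_1,F_2\leq E(\cI)$ and an isomorphism $\psi\colon F_1\to F_2$: each $F_i$ is a finite \emph{inverse} subsemigroup of $\cI$ but generally not $\J$-linear, so $\cA$-homogeneity does \emph{not} apply directly --- \textbf{this is the main obstacle}, and is precisely the subtlety flagged in Remark~\ref{rem_careful}. The remedy is to enlarge: embed $F_i$ into a finite $\J$-linear inverse semigroup $Q_i\leq\cI$ (a symmetric inverse semigroup $\cI_{m}$ will do, using universality plus $\cA$-homogeneity to situate it), arrange via the extension property of Theorem~\ref{genFr} that $\psi$ lifts to an isomorphism between suitable $\J$-linear neighbourhoods, then apply $\cA$-homogeneity to that larger configuration; the resulting $\alpha\in\Aut(\cI)$ restricts to $E(\cI)$ and extends $\psi$. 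Combined with the (easily checked) fact that $\Age(E(\cI))$ is all finite semilattices and that $E(\cI)$ has the relevant extension property (again via universality of $\cI$ and the extension property of the Hrushovski construction), uniqueness of $\Omega$ gives $E(\cI)\cong\Omega$.

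Part (5): $\J=\D$ holds in every locally finite semigroup, as noted in Section~\ref{subsec_semi}. Each principal factor $J^{*}$ of $\cI$ is a $0$-simple inverse semigroup which, being periodic, is completely $0$-simple, hence a Brandt semigroup $B(I,H)$ over its structure group $H$ with index set $I$; by part (3), $H\cong\cU$, and it remains to show $I$ is countably infinite, i.e.\ $I\cong\N$. Countability is clear; infiniteness follows because within any fixed $\J$-class the number of $\L$-classes (equivalently the number of subsets of the ground set of a given finite rank, as $n_i\to\infty$) is unbounded, so the union has infinitely many $\L$-classes. Hence every principal factor is isomorphic to $B(\N,\cU)$, completing the proof.
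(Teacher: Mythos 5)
Parts (1), (2), (3) and (5) of your proposal track the paper's own proof closely: (2) is done via chains of idempotents realising chains of $\J$-classes plus $\cA$-homogeneity, (3) via transitivity of $\Aut(\cI)$ on idempotents and the fact that finite groups are $\J$-linear, and (5) via periodicity, part (3), and embedding the combinatorial Brandt semigroups $B(\{1,\dots,n\},1)$ to force infinitely many $\R$- and $\L$-classes. (One small slip in (3): since $K_i\leq H_e$ already contains $e$ as its identity, $\langle K_i,e\rangle=K_i$ is just a group --- there is no ``zero adjoined'' --- but the conclusion that it lies in $\cA$ is unaffected, and the extra $\cS_n$-limit discussion is unnecessary once homogeneity, universality and local finiteness of $H_e$ are in hand.)

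The genuine gap is in part (4). Your proposed ``remedy'' is to take an arbitrary isomorphism $\psi\colon F_1\to F_2$ between finite subsemilattices of $E(\cI)$, enlarge each $F_i$ to a $\J$-linear inverse subsemigroup $Q_i\leq\cI$, lift $\psi$ to an isomorphism of these $\J$-linear neighbourhoods, and then apply $\cA$-homogeneity. This cannot work: if every such $\psi$ admitted a $\J$-linear lift, then $\Aut(\cI)$ would act homogeneously on copies of every finite semilattice, in particular on the non-chain semilattice $V_3$, contradicting Proposition~\ref{prop_most_hom_I} since $V_3\notin\cA$. This is exactly the trap flagged in Remark~\ref{rem_careful}: the homogeneity of $E(\cI)$ as an abstract semilattice cannot be witnessed by automorphisms of $\cI$, so no amount of enlarging to $\J$-linear neighbourhoods will rescue the argument. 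Your fallback --- that $E(\cI)$ has ``the relevant extension property, easily checked via universality and the Hrushovski extension property'' --- is the right strategy but is precisely where all the work of part (4) lies, and it is not easy: one must actually exhibit, inside a $\J$-linear extension of a given $\cI_n\leq\cI$ containing the configuration, a \emph{new} idempotent with prescribed meets against the given finite set of idempotents. The paper does this by verifying the Albert--Burris axiomatisation of $\Omega$ (Theorem~\ref{Omega}), constructing the required witness explicitly via the Vagner--Preston embedding $S\to\cI_S$ and the semilattice embedding $X\mapsto\wh{X}=\{\gamma\in\cI_n:\ \im\gamma\subseteq X\}$ of $E(\cI_n)$ into $E(\cI_{\cI_n})$, with $B=\wh{D}\cup\wh{E}$ serving as the witness for axiom $(\ast)$ (Proposition~\ref{sets}), and only then invoking the extension property of Theorem~\ref{genFr}. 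Without this (or an equivalent) concrete construction, part (4) remains unproved.
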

We note that $(\cI/\R,\leq_\R)$, $(\cI/\L,\leq_\L)$ and $(E(\cI),\leq)$ are all isomorphic, since $\cI$ is an inverse semigroup, so part (4) also serves as a description of the $\R$ and $\L$ orders of $\cI$. 
%
%
%
%
%
%
%
The rest of this section will be devoted to proving Theorem~\ref{prop_i}. Part (4) takes the most work, so we shall deal with it last.  Part (1) was established in the proof of Theorem~\ref{main_inv}.

\subsection{The order type of $\cI/\J$}
First note that since $\cI$ is a union of $\J$-linear inverse semigroups it follows that $\cI/\J$ is a chain. This chain is certainly countable since $\cI$ is countable. To show that it has order type $(\mathbb{Q}, \leq)$ it would now suffice to show that it is dense and without end-points. For that it suffices to observe that $\Aut(\cI)$ acts $2$-homogeneously on $\cI/\J$. In fact  $\Aut(\cI)$ acts $k$-homogeneously on $\cI/\J$ for any $k$, as we now show. 

First we record a general fact about inverse semigroups.

\begin{lem}
Let $S$ be an inverse semigroup and let $J,K$ be $\J$-classes of $S$ with $J\leq K$. Then for any $f\in E(K)$
there exists $e\in E(J)$ such that $ef=fe=e$.
\end{lem}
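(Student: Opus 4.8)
The plan is to reduce first to comparing $f$ with an \emph{idempotent} of $J$ rather than an arbitrary element. Pick any $a\in J$; since $aa^{-1}\,\R\,a$ we have $aa^{-1}\in E(J)$, and as $J\leq K$ every element of $J$ lies in $S^1fS^1$, so in particular there is an idempotent $i\in E(J)$ with $i\leq_\J f$. Fix a factorisation $i=xfy$ with $x,y\in S^1$. The one point worth flagging in advance is that the naive guesses for $e$ — such as $fi$ or $fif$, which are idempotents below $f$ because $E(S)$ is commutative — need not stay in $J$: they may fall into a strictly smaller $\J$-class. The right candidate is obtained by a \emph{balanced} factorisation of $i$ through $f$ together with the standard "$ab$ versus $ba$" trick.

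Concretely, I would set $a=ixf$ and $b=fyi$. A short computation gives $ab=i(xfy)i=i^{3}=i$, as well as $ia=a$ and $bi=b$. The key observation is then that $h:=ba$ is an idempotent, since
\[
(ba)^2=b(ab)a=bia=ba .
\]
Moreover $h=ba=fyixf$ carries an $f$ on each side, so $fh=hf=h$; that is, $h\leq f$ in the natural partial order on idempotents, which is exactly the condition $hf=fh=h$ required in the statement.

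It remains to check that $h$ lies in $J$, i.e.\ $h\,\J\,i$. One inclusion is immediate: $h=ba=(bi)(ia)=bia\in S^1iS^1$, so $h\leq_\J i$. For the reverse, $i=i^{2}=(ab)^2=a(ba)b=ahb\in S^1hS^1$, so $i\leq_\J h$; hence $h\,\J\,i$ and therefore $h\in E(J_i)=E(J)$. Taking $e=h$ completes the proof.

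I expect the only genuinely delicate point to be the verification that $e$ does not slip out of $J$ — everything else is routine idempotent bookkeeping — and the balanced factorisation $a=ixf$, $b=fyi$ (so that $ab=i$ and $ba$ is an idempotent conjugate to $i$ that sits underneath $f$) is precisely what makes this work. No periodicity or finiteness hypothesis is needed; only that $S$ is inverse (so that $aa^{-1}\,\R\,a$ provides the initial idempotent in $J$).
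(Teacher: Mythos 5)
Your proof is correct, but it follows a genuinely different route from the paper's. The paper exploits the inverse operation directly: taking an idempotent $g\in J$ and a factorisation $g=sft$, it sets $e=s^{-1}gt^{-1}=(s^{-1}s)f(tt^{-1})$, which is a product of three commuting idempotents — hence an idempotent satisfying $ef=fe=e$ for free — and the identity $g=set$ shows $e\,\J\,g$, all in two lines. You instead use the classical ``$ab$ versus $ba$'' conjugation trick: from $i=xfy$ you build $a=ixf$, $b=fyi$ with $ab=i$, and take $e=ba=fyixf$, which visibly absorbs $f$ on both sides and is $\J$-equivalent to $i$ via $h=bia$ and $i=ahb$. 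Your version is slightly longer but strictly more general: the conjugation step works in an arbitrary semigroup, so the inverse hypothesis is only needed to guarantee that $J$ contains an idempotent at all (via $aa^{-1}$); the same argument proves the statement for any semigroup whose $\J$-class $J$ is regular. Your cautionary remark about $fi$ and $fif$ possibly dropping out of $J$ is also well taken (e.g.\ in a Brandt semigroup the product of two distinct idempotents of the top $\J$-class is $0$), and it is precisely the point that both proofs have to address — the paper does so by checking $g=set$, you by checking $i=ahb$.
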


\begin{proof}
Pick an arbitrary idempotent $g\in J$. There exist $s,t\in S^1$ such that $sft=g$. Thus
$$
g=sft=ss^{-1}sftt^{-1}t=s(s^{-1}gt^{-1})t,
$$
so $s^{-1}gt^{-1}\,\J\, g$. Now, $e=s^{-1}gt^{-1}=(ss^{-1})f(t^{-1}t)$ is an idempotent, it belongs to $J$, and
we have $fe=ef=(ss^{-1})f(t^{-1}t)f=(ss^{-1})f^2(t^{-1}t)=(ss^{-1})f(t^{-1}t)=e$.
\end{proof}

This immediately generalises to

\begin{cor}
Let $S$ be an inverse semigroup and let $J_0<J_1<\dots<J_k$ be a chain of $\J$-classes of $S$. Then there exist 
$e_i\in J_i$, $0\leq i\leq k$, such that $\{e_0,e_1,\dots,e_k\}\leq S$ with $e_0<e_1<\dots<e_k$, i.e.\ $e_ie_j=
e_je_i=e_{\min(i,j)}$ for all $i,j$.
\end{cor}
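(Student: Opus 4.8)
The plan is to prove the Corollary by induction on $k$, using the Lemma repeatedly to pull an idempotent down the chain while simultaneously ensuring it lies below the idempotents already selected. The base case $k=0$ is trivial: pick any idempotent $e_0 \in J_0$, which exists since in an inverse semigroup every $\J$-class of a finite (or here, arbitrary) inverse semigroup containing an idempotent does so — and all $\J$-classes of $\cI$ (being built from symmetric inverse semigroups) contain idempotents.

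For the inductive step, suppose we have already constructed $e_1 < e_2 < \dots < e_k$ with $e_i \in J_i$ and $e_i e_j = e_j e_i = e_{\min(i,j)}$, and we wish to produce $e_0 \in J_0$ with $e_0 < e_i$ for all $i \geq 1$. Here I would apply the Lemma with $K = J_1$ and $f = e_1 \in E(K)$, and with $J = J_0 < J_1 = K$: this gives an idempotent $e_0 \in E(J_0)$ with $e_0 e_1 = e_1 e_0 = e_0$. It then remains to check that $e_0$ is automatically below all the higher idempotents as well. This follows from commutativity of the semilattice of idempotents together with the relations already in hand: for $i \geq 1$ we have $e_0 e_i = (e_0 e_1) e_i = e_0 (e_1 e_i) = e_0 e_1 = e_0$, using $e_1 e_i = e_{\min(1,i)} = e_1$, and symmetrically $e_i e_0 = e_0$. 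So $\{e_0, e_1, \dots, e_k\}$ is a subsemilattice (indeed a chain) of $S$ of the required form.

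The only subtlety — and the step I would flag as requiring a word of care rather than being a genuine obstacle — is the direction of the induction: one must add the new idempotent at the \emph{bottom} of the chain, not the top, because the Lemma produces an idempotent \emph{below} a given one in a strictly lower $\J$-class. Building the chain top-down (start with $e_k$, then produce $e_{k-1} < e_k$, etc.) makes the bookkeeping clean, since at each stage the newly produced idempotent is forced to lie below everything constructed so far by the short computation above. One should also note explicitly that the existence of an initial idempotent in each $J_i$ is guaranteed because in an inverse semigroup every element $a$ satisfies $aa^{-1} \in E(S)$ and $aa^{-1} \,\mathscr{J}\, a$ (as $a = (aa^{-1})a$ and $aa^{-1} = a a^{-1}$), so every $\J$-class meets $E(S)$.
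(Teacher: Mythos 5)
Your proof is correct and is essentially the argument the paper intends: the paper offers no separate proof, stating only that the Lemma ``immediately generalises,'' and the intended generalisation is exactly your top-down induction, applying the Lemma to the bottom two $\J$-classes at each stage and using commutativity of idempotents plus the relation $e_1e_i=e_1$ to see that the new idempotent sits below the whole chain. Your closing remark that every $\J$-class of an inverse semigroup contains an idempotent (via $a\,\J\,aa^{-1}$) correctly supplies the fact the Lemma's proof uses implicitly, and is more to the point than the earlier aside about $\cI$, since the statement concerns an arbitrary inverse semigroup $S$.
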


\begin{cor}\label{cor_chains}
Given any two chains $J_0<J_1<\dots<J_k$ and $J'_0<J'_1<\dots<J'_k$ of $\J$-classes in $\cI$ there is an 
$\alpha\in\Aut(\cI)$ such that $J_i\alpha=J'_i$ for all $0\leq i\leq k$.
\end{cor}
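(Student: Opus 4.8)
\textbf{Proof plan for Corollary~\ref{cor_chains}.}
The plan is to combine the previous corollary (which produces, from a chain of $\J$-classes, an actual chain of idempotents sitting one below the other in the semilattice order) with the $\cA$-homogeneity of $\cI$. First I would apply the previous corollary to the chain $J_0<J_1<\dots<J_k$ to obtain idempotents $e_i\in J_i$ with $e_0<e_1<\dots<e_k$, meaning $e_ie_j=e_je_i=e_{\min(i,j)}$; likewise apply it to $J'_0<J'_1<\dots<J'_k$ to obtain idempotents $e'_i\in J'_i$ with $e'_0<e'_1<\dots<e'_k$. Then $A=\{e_0,\dots,e_k\}$ and $A'=\{e'_0,\dots,e'_k\}$ are finite inverse subsemigroups of $\cI$, and each is a chain under the natural partial order, hence in particular $\J$-linear, so both $A$ and $A'$ lie in $\cA$.

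The map $\phi\colon A\to A'$ sending $e_i\mapsto e'_i$ is an isomorphism of inverse semigroups, since both are the $(k+1)$-element chain $e_0<e_1<\dots<e_k$ and $\phi$ is order-preserving and bijective. Because $A,A'\sqsubseteq\cI$ (they are inverse subsemigroups of $\cI$, and $\sqsubseteq$ on $\cA$ is just the inverse-subsemigroup relation) and $\cI$ is $\cA$-homogeneous, $\phi$ extends to an automorphism $\alpha\in\Aut(\cI)$. Then for each $i$ we have $e_i\alpha=e'_i$, and since $e_i\in J_i$, $e'_i\in J'_i$, and automorphisms map $\J$-classes onto $\J$-classes, it follows that $J_i\alpha=J'_i$ for all $0\leq i\leq k$, as required.

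There is essentially no serious obstacle here once the previous corollary is in hand: the only point that needs a moment's care is checking that a finite chain of idempotents really is an object of $\cA$ (it is a semilattice, a finite chain, hence $\J$-linear), and that the bijection between two such chains respecting the ordering is an isomorphism in the $(2,1)$-signature (it automatically preserves both the multiplication and the inversion, inverses being trivial on idempotents). The mild subtlety worth flagging is that $\cA$-homogeneity is being invoked for the pair of subsemigroups $A$ and $A'$, not for the $\J$-classes directly — one cannot argue about the action of $\Aut(\cI)$ on $\cI/\J$ in isolation, exactly as warned in Remark~\ref{rem_careful} — so the argument must genuinely pass through concrete idempotent chains inside $\cI$.
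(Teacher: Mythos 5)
Your proposal is correct and follows exactly the paper's argument: obtain idempotent chains $e_0<\dots<e_k$ and $e'_0<\dots<e'_k$ from the preceding corollary, note these are $\J$-linear (hence in $\cA$) so the obvious isomorphism between them extends to an automorphism of $\cI$ by $\cA$-homogeneity, and conclude since automorphisms permute $\J$-classes. The extra details you supply (chains lie in $\cA$, the map is an isomorphism in the $(2,1)$-signature) are correct elaborations of what the paper leaves implicit.
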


\begin{proof}
This follows from the previous corollary  together with the condition of $\cA$-homogeneity, since the 
isomorphism between finite chains of idempotents $e_0<e_1<\dots<e_k$ and $e'_0<e'_1<\dots<e'_k$ extends to
an automorphism of $\cI$, and automorphisms map $\J$-classes onto $\J$-classes.
\end{proof}

Theorem \ref{prop_i}(2) now follows since Corollary~\ref{cor_chains} implies  that $(\cI/\J,\leq)$ is a countable dense linear order without endpoints, and thus must be isomorphic 
to $(\mathbb{Q},\leq)$.

%
 %
%


\subsection{Maximal subgroups}

By $\cA$-homogeneity, $\Aut(\cI)$ acts transitively on the set $E=E(\cI)$ as each idempotent forms a trivial subsemigroup which is $\J$-linear. It  follows that for all $e,f\in E$, $H_e\cong H_f$ i.e.\ all maximal subgroups of $\cI$ are isomorphic. 
%
%
%
Now fix $e\in E$. Since
$\cI$ is universal, it embeds every finite group. Each of these embeddings is into some group $\H$-class of $\cI$
which, since all such groups are isomorphic, implies that $H_e$ is universal. Local finiteness of $H_e$ follows
from local finiteness of $\cI$. We claim that $H_e$ is homogeneous. Indeed, if $\phi:G_1\to G_2$ is an isomorphism 
between finite subgroups of $H_e$ then $e\phi=e$. Since groups are $\J$-linear, by $\cA$-homogeneity $\phi$ extends
to $\hat\phi\in\Aut(\cI)$. Since $e\hat\phi=e$ it follows that $\hat\phi\restriction_{H_e}\in\Aut(H_e)$ and it 
extends $\phi$. Thus $H_e$ is the countable universal locally finite homogeneous group $\cU$.
This completes the proof of part (3) of Theorem~\ref{prop_i}.

\subsection{Principal factors}

Since $\cI$ is locally finite, and thus periodic, it follows that $\J = \D$ and that  every principal factor of $\cI$ is 
isomorphic to a completely 0-simple semigroup. Transitivity of $\Aut(\cI)$ on $E(\cI)$ implies that any two principal factors of $\cI$ are isomorphic. 
%
%
%
%
%
%
%
This shows every principal factor is isomorphic to a Brandt semigroup over $\cU$ (by \cite[Theorem~5.1.8]{Howie}).
Let $J$ be a $\J$-class of $\cI$. For every $n\in\N$ the finite inverse semigroup $B(\{1,\dots,n\},{1_G})$ embeds
in $\cI$, since $\cI$ is universal, and thus it embeds in $J$. From this it follows that $J$ has infinitely many $\R$- and $\L$-classes. Since
$\cI$ is countable, this proves that $J^\ast\cong B(\N,\cU)$. 
This completes the proof of part (5) of Theorem~\ref{prop_i}. 

\subsection{The semilattice of idempotents} The rest of this section will be devoted to proving part (4) of Theorem~\ref{prop_i}. 
This  requires more work than the other parts of Theorem~\ref{prop_i} due to the fact that $\Aut(I)$ does not act homogeneously on the semilattice $E(\cI)$ of idempotents; see Remark~\ref{rem_careful}. 
We shall make use of the following characterisation of the countable universal homogeneous semilattice $\Omega$.



%
%
%
%
%

\begin{thm}{\emph{(\cite[Theorem~4.2]{AB}, cf. \cite[Theorem~2.5]{Droste})}}
\label{Omega}
Let $(\Omega,\wedge)$ be a countable semilattice. Then $\Omega$ is the universal homogeneous semilattice if and
only if the following conditions hold:
\begin{itemize}
\item[(i)] no element is maximal or minimal;
\item[(ii)] any pair of elements has an upper bound;
\item[(iii)] $\Omega$ satisfies the following axiom ($\ast$) depicted in Figure~\ref{fig_PropertyStar}: for any $\alpha,\gamma,\delta,\eps\in\Omega$ such that
$\delta,\eps\leq\alpha$, $\gamma\not\leq\delta$, $\gamma\not\leq\eps$, $\alpha\not\leq\gamma$, and either
$\delta=\eps$, or $\delta\parallel\eps$ and $\gamma\wedge\eps\leq\gamma\wedge\delta$, there exists $\beta\in\Omega$
such that $\delta,\eps\leq\beta\leq\alpha$ and $\beta\wedge\gamma=\delta\wedge\gamma$ 
(in particular, $\beta\parallel\gamma$). 
\end{itemize}
\end{thm}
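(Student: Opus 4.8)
This is the characterisation of the \Fr limit of the class of all finite semilattices, and a complete proof can be found in the cited references; I describe the route I would take. By \Frs Theorem, the universal homogeneous semilattice is the unique countable semilattice $M$ that embeds every finite semilattice and has the \emph{one-point extension property}: whenever $A$ is a finite subsemilattice of $M$ and $A\cup\{b\}$ is a finite semilattice extending $A$ by one new element, the inclusion $A\hookrightarrow M$ extends to an embedding of $A\cup\{b\}$ into $M$. From one-point extensions, arbitrary extensions follow by induction on the size of the extension, and from these, universality and homogeneity follow by a back-and-forth argument. So I would reduce the theorem to the claim that, for a countable semilattice $\Omega$, conditions (i)--(iii) together are equivalent to universality plus the one-point extension property.

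For the forward implication I would observe that each of (i), (ii), (iii) is an instance of a one-point extension of a finite subsemilattice: (i) amounts to extending a one-element subsemilattice $\{\alpha\}$ by a new element below, respectively above, $\alpha$; (ii) to extending $\{\delta,\eps,\delta\wedge\eps\}$ by a new common upper bound of $\delta$ and $\eps$; and (iii) to extending the finite subsemilattice generated by $\alpha,\gamma,\delta,\eps$ by a fresh element $\beta$ with $\delta,\eps\leq\beta\leq\alpha$ and $\beta\wedge\gamma=\delta\wedge\gamma$. The content here is to check that the hypotheses listed on $\alpha,\gamma,\delta,\eps$ are exactly what guarantees that these prescriptions determine an honest semilattice structure on the enlarged set (commutativity is automatic, so this is an associativity check on the induced meets $\beta\wedge a$ for $a$ in the subsemilattice): in particular $\gamma\wedge\eps\leq\gamma\wedge\delta$ is forced, since $\eps\leq\beta$ already gives $\gamma\wedge\eps\leq\gamma\wedge\beta=\gamma\wedge\delta$, and the remaining meets are computed using $\beta\leq\alpha$.

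For the converse I would assume (i)--(iii) and prove that $\Omega$ has the one-point extension property; universality then follows by embedding any finite semilattice one element at a time (a finite semilattice can be reduced to a point by repeatedly deleting maximal, hence meet-irreducible, elements), and repeated use of (ii) gives that every finite subset of $\Omega$ has an upper bound. Given a finite subsemilattice $A\leq\Omega$ and a one-point extension $A\cup\{b\}$, I would put $F=\{a\in A:a\geq b\}$; if $F$ is empty, first enlarge $A$ inside $\Omega$ by an upper bound of $A$ to reduce to the case $F\neq\varnothing$, where $\alpha:=\bigwedge F\in A$ is the least element of $A$ above $b$. Then I would induct on the number of ``obstructions'', the elements of $A$ that lie below $\alpha$ but not above $b$: with no obstructions, $b$ must be squeezed strictly between $\alpha$ and the join of what has been placed below it, which (i) makes possible; and each obstruction $\gamma$ would be stripped off by a single application of axiom $(\ast)$, with $\delta$ the prescribed value of $b\wedge\gamma$ and $\eps$ the current approximation to $b$. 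Once the one-point extension property is established, \Frs Theorem identifies $\Omega$ with the generic semilattice.

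The hard part will be this induction. One must order the removal of the obstructions so that at every stage the non-degeneracy hypotheses and the monotonicity hypothesis $\gamma\wedge\eps\leq\gamma\wedge\delta$ of $(\ast)$ are met, and --- the more delicate point --- one must ensure that the element finally produced realises \emph{all} the prescribed meets $\beta\wedge a$ with $A$ simultaneously, not just the meet with the obstruction currently under treatment. Finding the right induction invariant --- one that records the current approximation to $b$ together with its meets against every element of $A$, and controls how these change under an application of $(\ast)$ --- is where the real work lies; everything else is bookkeeping.
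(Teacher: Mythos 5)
The paper does not prove this statement: it is quoted from \cite[Theorem~4.2]{AB} (cf.\ \cite[Theorem~2.5]{Droste}) and used as a black box in the proof of Theorem~\ref{prop_i}(4), so there is no in-paper argument to compare yours against. On its own terms, your reduction via \Frs Theorem to ``universality plus the one-point extension property'' is the standard and correct frame, and your forward direction is essentially complete: each of (i)--(iii) is the realisation of a one-point extension of a finite subsemilattice, and your remark that $\gamma\wedge\eps\leq\gamma\wedge\delta$ is forced by $\eps\leq\beta$ correctly accounts for that hypothesis.

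The converse is where the whole content of the theorem lives, and there your sketch has a genuine gap, which you yourself flag. Two concrete points. First, the base case is already not right as stated: a meet-semilattice has no joins, so ``the join of what has been placed below $b$'' need not exist in $A$. When the ideal $\{a\in A:a\leq b\}$ has several maximal elements, condition (i) does not suffice; you must already invoke $(\ast)$ in its $\delta\parallel\eps$ form, repeatedly, to absorb the prescribed lower bounds, and for each such application you must manufacture an admissible $\gamma$ --- the hypotheses $\gamma\not\leq\delta$, $\gamma\not\leq\eps$, $\alpha\not\leq\gamma$, $\gamma\wedge\eps\leq\gamma\wedge\delta$ are not automatically satisfiable by whatever happens to lie around, and producing such a $\gamma$ from (i)--(iii) is itself part of the work. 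Second, the conclusion of $(\ast)$ does not by itself force $\beta\notin A$ (for instance it does not exclude $\beta=\alpha$, nor $\beta=\delta$ when $\delta\leq\gamma$), so the invariant of your induction must also certify that each approximation stays off $A$ and that the meets $\beta\wedge a$ already secured at earlier stages are not destroyed by the next application of $(\ast)$. These are precisely the points your final paragraph defers, so what you have is a correct plan whose decisive verification is missing; for a complete argument one should follow the case analysis in \cite{AB}.
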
 
Here the notation $\alpha \parallel \beta$ means that $\alpha$ and $\beta$ are incomparable, where $\alpha$ and $\beta$ are elements of a poset. 

\begin{figure}
\begin{center}
\begin{tikzpicture}
\tikzstyle{vertex}=[circle,draw=black, fill=black, inner sep = 0.3mm]
\node (Implies) at (5.5,-1.5) {\small{$\Rightarrow \exists \beta$}};
\node (a) [vertex,label={90:{\small $\alpha$}}] at (1,1.5) {};
\node (d) [vertex,label={90:{\small $\delta$}}] at (2,0) {};
\node (e) [vertex,label={90:{\small $\epsilon$}}] at (0,0) {};
\node (g) [vertex,label={90:{\small $\gamma$}}] at (4,1) {};
\node (a') [vertex,label={90:{\small $\alpha$}}] at (1+7,1.5) {};
\node (b') [vertex,label={45:{\small $\beta$}}] at (1+7,0.5) {};
\node (d') [vertex,label={90:{\small $\delta$}}] at (2+7,0) {};
\node (e') [vertex,label={90:{\small $\epsilon$}}] at (0+7,0) {};
\node (g') [vertex,label={90:{\small $\gamma$}}] at (4+7,1) {};
\node (b'g') [vertex,label={0:{\small $ \beta \land \gamma = \delta \land \gamma$}}] at (2.5+7,-1.5) {};
\node (dg) [vertex,label={0:{\small $ \delta  \land \gamma $}}] at (2.5,-1.5) {};
\node (eg) [vertex,label={-90:{\small $ \epsilon  \land \gamma $}}] at (1,-2.5) {};
\edgedraw{a}{d}
\edgedraw{a}{e}
\edgedraw{dg}{eg}
\edgedraw{b'}{d'}
\edgedraw{b'}{e'}
\edgedraw{b'}{a'}
\dashedgedraw{d}{dg}
\dashedgedraw{e}{eg}
\dashedgedraw{g}{eg}
\dashedgedraw{g}{dg}
\dashedgedraw{b'}{b'g'}
\dashedgedraw{d'}{b'g'}
\dashedgedraw{g'}{b'g'}
\end{tikzpicture}
\end{center}
\caption{Illustration of property ($\ast$) in the characterisation of the countable universal homogeneous semilattice.}\label{fig_PropertyStar}
\end{figure}
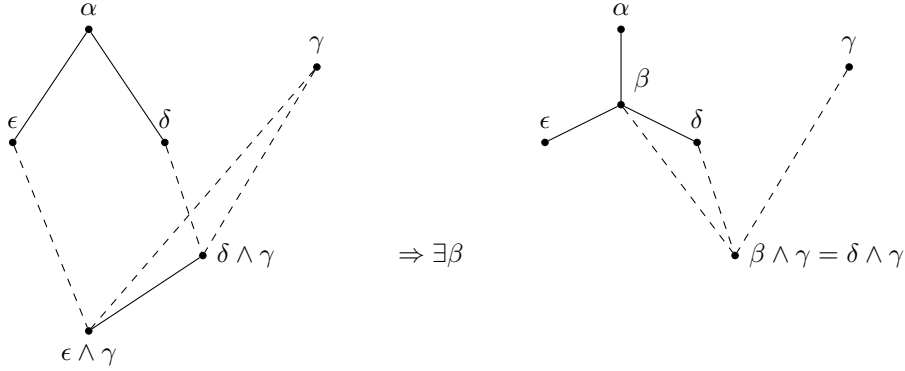

Let $\psi:S\to\cI_S$ where $x\psi=\rho_x$ be the embedding given by the Vagner--Preston Theorem. 
%
%
%
%
In the particular case when $x=e\in E(S)$ we have
$
\rho_e = \id_{Se}.
$
When $S=\cI_n$ and $\eps \in S$ is an idempotent, it follows that $\eps$ is  
the identity mapping on the subset $\dom\eps=
\im\eps$ of $\{1,\dots,n\}$, and then 
$$
\im\rho_\eps=\cI_n\eps=\{\gamma\in\cI_n:\ \im\gamma\subseteq\im\eps\}.
$$
This motivates us to define, for each $X\subseteq\{1,\dots,n\}$, the set
$$
\wh{X}=\{\gamma\in\cI_n:\ \im\gamma\subseteq X\},
$$
so that we have $\widehat{\im\eps}=\im\rho_\eps$. The operation $\ \widehat{}\ $ defines an injective map 
$$
\widehat{} \; :\mathcal{P}(\{1,\dots,n\})\to\mathcal{P}(\cI_n),
$$ 
$X\mapsto\wh{X}$. 
(Here we use the notation $\mathcal{P}(A)$ to denote the \emph{power set} of all subsets of a set $A$.) 
It is easy to see that for all $X,Y\subseteq\{1,\dots,n\}$ we have 
$
\widehat{X\cap Y} = \wh{X}\cap\wh{Y},
$
from which it follows that $\ \widehat{}\ $ is a semilattice embedding from $E(S)$ into $E(\cI_S)$.  

In the special case that $S=\cI_n$, the semilattice $E(\cI_n)$ is isomorphic to 
$(\mathcal{P}(\{1,\dots,n\}),\cap)$ via the map $\eps\mapsto\im\eps$ and, similarly, 
$E(\cI_{\cI_n})$ is isomorphic to $(\mathcal{P}(\cI_n),\cap)$. We conclude that $\ \widehat{}\ $ gives rise to a semilattice 
embedding of $E(\cI_n)$ into $E(\cI_{\cI_n})$.

\begin{pro}\label{sets}
Suppose we have $A,C,D,E\subseteq\{1,\dots,n\}$ such that the conditions of the left-hand side of axiom ($\ast$)
are satisfied, that is $D\cup E\subseteq A$, $C\not\subseteq D$, $C\not\subseteq E$, $A\not\subseteq C$, and 
either $D=E$, or $D\parallel E$ and $C\cap E\subseteq E\cap D$. Then setting $B=\wh{D}\cup\wh{E}\subseteq \cI_n$
we have $\wh{D}\subseteq B\subseteq\wh{A}$, $\wh{E}\subseteq B$ and $B\cap\wh{C}=\wh{D}\cap\wh{C}$.
\end{pro}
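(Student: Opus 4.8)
The plan is to check directly that $B=\wh D\cup\wh E$, viewed as an element of the semilattice $E(\cI_{\cI_n})\cong(\mathcal{P}(\cI_n),\cap)$, meets the three required conditions. Everything will reduce to set containments inside $\{1,\dots,n\}$ by repeatedly using two facts already established before the proposition: the identity $\widehat{X\cap Y}=\wh X\cap\wh Y$, and that $\wh{}$ is order preserving, so $X\subseteq Y$ implies $\wh X\subseteq\wh Y$.

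First I would handle the containments $\wh D\subseteq B$, $\wh E\subseteq B$ and $B\subseteq\wh A$. The first two are immediate since $B$ is a union containing each of $\wh D$ and $\wh E$. For $B\subseteq\wh A$, observe that $D\subseteq A$ and $E\subseteq A$ give $\wh D\subseteq\wh A$ and $\wh E\subseteq\wh A$, whence $B=\wh D\cup\wh E\subseteq\wh A$.

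The one computation to do is the identity $B\cap\wh C=\wh D\cap\wh C$. The inclusion $\supseteq$ is clear because $\wh D\subseteq B$. For $\subseteq$, distributivity of $\cap$ over $\cup$ gives $B\cap\wh C=(\wh D\cap\wh C)\cup(\wh E\cap\wh C)$, so it is enough to show $\wh E\cap\wh C\subseteq\wh D\cap\wh C$. By the identity $\widehat{X\cap Y}=\wh X\cap\wh Y$ this is the same as $\widehat{E\cap C}\subseteq\widehat{D\cap C}$, and by monotonicity of $\wh{}$ it follows once we know $E\cap C\subseteq D\cap C$. This final set-theoretic containment is precisely where the two alternatives in the hypothesis are used: if $D=E$ it is trivial; if instead $D\parallel E$ and $C\cap E\subseteq C\cap D$ (equivalently $C\cap E\subseteq E\cap D$, since each says $C\cap E\subseteq D$) then $E\cap C=C\cap E\subseteq C\cap D=D\cap C$.

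I do not expect a genuine obstacle here: the whole proof is a short bookkeeping argument, and the only point deserving emphasis is conceptual rather than technical, namely that $B=\wh D\cup\wh E$ is in general \emph{not} of the form $\wh X$ for any $X\subseteq\{1,\dots,n\}$ — it is properly contained in $\widehat{D\cup E}$ whenever $D$ and $E$ are incomparable — so the witnessing idempotent $B$ genuinely lives outside the image of $E(\cI_n)$ in $E(\cI_{\cI_n})$. It is this phenomenon that ultimately allows property $(\ast)$ to hold in the limit semilattice $E(\cI)$.
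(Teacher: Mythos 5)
Your proof is correct and is essentially identical to the paper's: both verify the containments via monotonicity of $\ \widehat{}\ $, and both reduce $B\cap\wh{C}=\wh{D}\cap\wh{C}$ to $\wh{E\cap C}\subseteq\wh{D\cap C}$ by distributing $\cap$ over $\cup$ and using $\widehat{X\cap Y}=\wh{X}\cap\wh{Y}$, splitting into the two cases $D=E$ and $D\parallel E$. Your closing observation that $B$ need not lie in the image of $E(\cI_n)$ is a nice (and accurate) piece of context, but it is not part of the argument itself.
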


\begin{proof}
Both $\wh{D}\subseteq B$ and $\wh{E}\subseteq B$ are obvious from the definition of $B$. Since $D\subseteq A$ we have $\wh{D}\subseteq\wh{A}$,
and since $E\subseteq A$ we have $\wh{E}\subseteq\wh{A}$, thus $B=\wh{D}\cup\wh{E}\subseteq\wh{A}$. To complete
the proof, observe that
$$
B\cap\wh{C}=(\wh{D}\cup\wh{E})\cap\wh{C}=(\wh{D}\cap\wh{C})\cup(\wh{E}\cap\wh{C})=\wh{D\cap C}\cup\wh{E\cap C}.
$$
If $D=E$ then this equals $\wh{D\cap C}=\wh{D}\cap\wh{C}$. Otherwise, $D\parallel E$ and $E\cap C\subseteq D\cap C$
which implies $\wh{E\cap C}\subseteq \wh{D\cap C}$ and so again we obtain 
$
B\cap\wh{C}=\wh{D\cap C}=\wh{D}\cap\wh{C}.
$
\end{proof}

\begin{proof}[Proof of Theorem \ref{prop_i} (4)]
Since $\cI$ is an $\cI_n$-limit inverse semigroup it follows that for any $e,f\in E(\cI)$ there exists 
$g\in E(\cI)$ such that $g\geq e$ and $g\geq f$. Since $\Aut(\cI)$ acts transitively on $E(\cI)$ it follows that there
are no maximal or minimal idempotents. So it just remains to verify property ($\ast$).
Suppose $\alpha,\gamma,\delta,\eps\in E(\cI)$ satisfy the conditions of the left-hand side of axiom ($\ast$).
Since $\cI$ is an $\cI_n$-limit inverse semigroup there exists $S\leq\cI$ with $S\cong \cI_n$ for some $n$
and $\{\alpha,\gamma,\delta,\eps\}\subseteq S$. Let $\psi:S\to\cI_S$ be the  Vagner--Preston embedding. It follows from
Proposition \ref{sets} that there is an element $\beta\in E(\cI_S)$ such that $\rho_\alpha,\rho_\gamma,\rho_\delta,
\rho_\eps,\beta$ (where $\rho_x=x\psi$) satisfy the right-hand side of ($\ast$). Since $S\cong\cI_n$ and $\cI_S$
are both $\J$-linear we can apply the extension property of $\cI$ (Theorem~\ref{genFr} part (3)) to obtain $S\leq V\leq\cI$ where $V\cong \cI_S$
and $\beta'\in E(V)$ such that $\alpha,\gamma,\delta,\eps,\beta'$ satisfy the right-hand side of axiom ($\ast$).
Now by Theorem \ref{Omega} it follows that $E(\cI)$ is the countable universal homogeneous semilattice.
\end{proof}



\section{Homogeneous structures within the semigroup $\cT$}

In this section we shall prove some results about the structure of the
universal maximally homogeneous full transformation limit semigroup $\cT$.

%
%

\begin{thm} \label{prop_t} 
Let $\cT$ be the maximally homogeneous full transformation limit semigroup. Then 
\begin{enumerate}
\item $\cT$ is locally finite and universal for finite semigroups.
\item $\cT/\J$ is a chain with order type $\mathbb{Q}$.
\item Every maximal subgroup is isomorphic to Hall's group $\cU$.
\item $\cT$ is regular and idempotent generated, $\J=\D$ and all principal factors are isomorphic to each other.
\item The Graham--Houghton graph of every $\D$-class
  of $\cT$ is isomorphic to the countable random bipartite
  graph.
\end{enumerate}
\end{thm}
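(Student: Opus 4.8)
Parts (1)--(4) follow the pattern already used for $\cI$. Part (1) is established in the proof of Theorem~\ref{main_sem}. Since $\cT$ is a union of members of $\cB$, which are all $\J$-linear, $\cT/\J$ is a countable chain; to get order type $\mathbb{Q}$ one argues exactly as in Corollary~\ref{cor_chains}: given two chains of $\J$-classes of the same length one produces, inside some $\cT_n\leq\cT$, chains of idempotents $e_0<\cdots<e_k$ and $e_0'<\cdots<e_k'$ lying in the respective $\J$-classes (an idempotent of $\cT_n$ of rank $r$ sits in the rank-$r$ $\D$-class, and one can pick a descending chain of idempotents realising any descending chain of ranks), and then $\cB$-homogeneity — finite chains of idempotents are $\J$-linear, hence in $\cB$ — extends the obvious isomorphism to an automorphism of $\cT$ carrying one chain of $\J$-classes to the other; density and no endpoints follow. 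For (3): since $\cT$ is $\cB$-homogeneous and every trivial subsemigroup (single idempotent) is in $\cB$, $\Aut(\cT)$ acts transitively on idempotents, so all maximal subgroups are isomorphic; each is universal (every finite group embeds in $\cT$, and every subgroup of $\cT$ lies in some group $\H$-class) and locally finite, and homogeneity of $H_e$ follows because an isomorphism between finite subgroups of $H_e$ fixes $e$, is an isomorphism of $\J$-linear (indeed group) subsemigroups, hence extends to $\alpha\in\Aut(\cT)$ with $e\alpha=e$, and $\alpha\restriction_{H_e}\in\Aut(H_e)$ — so $H_e\cong\cU$. For (4): $\cT$ is a directed union of the regular, idempotent-generated semigroups $\cT_n$ (each $\cT_n$ is generated by its idempotents together with… — actually one should be slightly careful: $\cT_n$ is \emph{not} idempotent-generated, but its ideal of non-permutations is, and the cofinal subsemigroups realising the $\cT_n$-chain can be chosen so that the argument goes through; alternatively, $\cT$ being a directed union of regular semigroups is immediate, and idempotent-generation follows from the fact that by $\cB$-homogeneity and universality every element lies in a copy of some $\cT_n$ in which it is a product of idempotents of $\cT$ of strictly smaller rank — this needs the observation that every non-unit of $\cT_n$ is a product of idempotents, Howie's theorem), $\J=\D$ by periodicity, and transitivity on idempotents gives that all principal factors are isomorphic.

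The substance of the theorem is part (5), and this is where the plan mirrors the treatment of $E(\cI)$ in Section~5, using the Hrushovski extension property to transport a combinatorial configuration from a finite $\cT_n$ up to $\cT$. Fix a $\D$-class $D$ of $\cT$, with $GH(D)$ its Graham--Houghton graph; by transitivity on idempotents all such graphs are isomorphic, so it suffices to identify one of them with the random bipartite graph. Recall the random bipartite graph is characterised as the unique countable bipartite graph whose parts are both infinite and which satisfies the one-point extension property: for finite disjoint $U,V$ in one part there is $w$ in the other part adjacent to all of $U$ and to none of $V$. The parts of $GH(D)$ are the sets of $\R$-classes and $\L$-classes in $D$; that these are infinite is seen exactly as in the $\cI$ case — for each $n$, a suitable finite Rees matrix / transformation configuration embeds in $D$ forcing arbitrarily many $\R$- and $\L$-classes, and countability of $\cT$ makes them countably infinite. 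For the extension property, suppose we are given finitely many $\R$-classes $R_{i_1},\dots,R_{i_a}$ and finitely many $\L$-classes (to be "avoided") $L_{\lambda_1},\dots,L_{\lambda_b}$ of $D$; we must find an $\L$-class of $D$ whose intersection with each $R_{i_k}$ is a group $\H$-class and whose intersection with each $R$-class indexed by the "avoid" data is not (or symmetrically, with the roles of $I$ and $\Lambda$ exchanged — one treats both, as the two parts must each have the extension property). The plan is: (a) find $n$ and a subsemigroup $S\cong\cT_n$ of $\cT$ whose rank-$\kappa$ $\D$-class $D_\kappa^S$ contains representatives of all the finitely many $\R$- and $\L$-classes in play, with the incidence relation among them induced correctly from $\cT$ (this uses that $\cT$ is a $\cT_n$-limit semigroup and that a finite sub-bipartite-graph of $GH(D)$ sits inside $GH(D_\kappa^S)$ for $S$ large enough); (b) prove the combinatorial core lemma: \emph{inside a large enough finite $\cT_m$ one can realise any prescribed one-point extension of a given finite induced sub-bipartite-graph of its rank-$\kappa$ Graham--Houghton graph} — concretely, given a set $A$ of rank-$\kappa$ subsets and a set $\mathcal P$ of rank-$\kappa$ partitions of $[m]$, with transversality incidences among them fixed, one must produce a further subset (or partition) with the desired pattern of transversality; (c) check that the relevant embedding $S\hookrightarrow\cT_m$ (or $\cT_n\hookrightarrow\cT_m$) is a $\sqsubseteq$-embedding, i.e. that $\cT_m\in\cB$ (true, by Shoji's result together with Lemma~\ref{lem_opp}) and that $\cT_n\in\cB$, and apply the extension property of Theorem~\ref{genFr}(3) to obtain a copy of $\cT_m$ inside $\cT$ extending $S$, thereby producing inside $\cT$ the required $\L$-class (or $\R$-class) witnessing the one-point extension; then uniqueness of the random bipartite graph finishes the proof.

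The main obstacle is step (b), the finite combinatorial lemma, and getting it stated in the exactly symmetric form needed (the one-point extension must be available both when the new vertex is added to the partition side and when it is added to the subset side, and the "avoid" sets must be handled simultaneously with the "connect" sets). On the subset side this is easy — given rank-$\kappa$ partitions $P_1,\dots,P_a$ to be transversal-to and $P_1',\dots,P_b'$ to be non-transversal-to, one directly builds a $\kappa$-subset $A$ of $[m]$ that is a transversal of each $P_j$ but not of any $P_j'$, which is possible once $m$ is large because one has enough room to pick, for each $P_j$, one point per block while simultaneously "colliding" two points in some block of each $P_j'$; one must also keep $A$ distinct from all previously named subsets, again fine for $m$ large. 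On the partition side the dual statement — build a $\kappa$-partition transversal to a prescribed set of $\kappa$-subsets and not to another prescribed set — requires a symmetric but slightly more delicate argument, since being a transversal is not symmetric in an obvious way; here one exploits that in $\cT_m$ for $m\gg\kappa$ there are far more partitions than subsets and the genericity is easy to arrange, essentially the same count as in the proof that the finite bipartite graphs form an amalgamation class. Packaging these two dual finite statements as a single "$GH(D_\kappa^{\cT_m})$ has rich extension behaviour" lemma, and then lifting via Theorem~\ref{genFr}(3) exactly as Proposition~\ref{sets} was lifted in the proof of Theorem~\ref{prop_i}(4), is the crux; everything else is bookkeeping already rehearsed for $\cI$.
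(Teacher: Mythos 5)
Your treatment of parts (1)--(4) matches the paper's, and your high-level skeleton for part (5) (pull the finitely many $\R$- and $\L$-classes down into some $S\cong\cT_n\leq\cT$, prove a finite combinatorial extension lemma in some $\cT_m$, lift back via the extension property of Theorem~\ref{genFr}) is also the paper's. But the step you dismiss as ``easy'' / ``essentially the same count as for finite bipartite graphs'' is precisely where the real work lies, and as you have set it up it fails. The configuration is not freely chosen in a large $\cT_m$: the subsets $A_i=\im a_i$ and $B_j=\im b_j$ are \emph{given} inside a fixed $\cT_n$, and you must transport them along some embedding $\cT_n\to\cT_m$ before extending. For the obvious embedding (fixing the new points of $Y=[m]\setminus[n]$) the required extension can be flatly impossible, no matter how large $m$ is. Concretely, take $n=3$, $r=2$ and $\L$-class representatives with images $A_1=\{1,2\}$, $A_2=\{1,3\}$, $A_3=\{2,3\}$. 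A partition $P$ of $[m]$ into $2+|Y|$ parts transversal to each $A_i\cup Y$ would have to separate all of $\{1,2,3\}\cup Y$ pairwise, i.e.\ place $3+|Y|$ elements into $2+|Y|$ parts injectively --- impossible. So ``enough room for $m$ large'' is not enough: the overlaps among the given images persist under the naive embedding, and transversality incidences cannot be prescribed freely the way edges can in a free amalgam of bipartite graphs.

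The paper's Proposition~\ref{gh-a} resolves exactly this: it constructs a non-obvious embedding $\psi:\cT_n\to\cT_{X\cup Z}$ (letting $\cT_n$ act on $R_\eps\cup\{0\}$ for a suitable idempotent $\eps$), under which each image $\im(a_i\psi)=A_i\cup C_i\cup\{0\}$ acquires a huge petal $C_i=H_{a_i\iota}$ of size $(r+|Y|)!$, with distinct petals pairwise disjoint; the total overlap is then confined to a set of size at most $n+1$, and the ``Flower Lemma'' (Lemma~\ref{lem_flower}), whose hypothesis is that the common head $M$ is smaller than the rank, produces the desired partition. Moreover, the opposite direction of the extension property (producing a new \emph{subset} transversal to prescribed partitions and to no others) --- which you call the easy side --- is the one the paper does \emph{not} know how to do directly; it is obtained indirectly by first proving $\cT\cong\cT^{opp}$ (Theorem~\ref{thm_flip}) and transporting condition (b) across that isomorphism. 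So the gap in your proposal is twofold: no mechanism for choosing the embedding that makes the finite extension possible, and no argument (direct or via $\cT^{opp}$) for the second part of the bipartition.
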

%
In the process of proving part (5) of Theorem~\ref{prop_t}, another interesting structural property of $\cT$ that we shall establish is that it is isomorphic to its opposite; see Theorem~\ref{thm_flip}. 
Parts (1), (3) and (4) are the most straightforward to prove, so we begin with them. 

\begin{proof}[Proofs of parts (1), (3) and (4) of Theorem~\ref{prop_t}] 

Part (1) is an immediate consequence of the definition of $\cT$ and
Cayley's Theorem for semigroups.    

(3) The argument is very similar to Theorem~\ref{prop_i}(3). Every finite
group belongs to $\cB$. In particular the trivial group belongs to
$\cB$ which implies $\Aut(\cT)$ acts transitively on $E(\cT)$. It
follows that all the maximal subgroups of $\cT$ are isomorphic to each
other. Fix and idempotent $e$ in $\cT$ and consider the group
$H_e$. The group $H_e$ is universal and locally finite because $\cT$
has both of these properties. Since every finite group belongs to
$\cB$, we can then see that the group $H_e$ is homogeneous as a
consequence of $\cB$-homogeneity of $\cT$.

(4) Since a union of regular semigroups is regular, and $\cT_n$ is
regular, it follows that $\cT$ is regular. 
To see that $\cT$ is idempotent generated first recall that, by \cite{HowieIGPaper}, 
for every $n \in \mathbb{N}$ we have $\langle
E(\cT_n) \rangle = (\cT_n \setminus \cS_n)\cup\{\mathrm{id}_{[n]}\}$. Let $n > 1$ and consider
the embedding $f: \cT_{n-1} \rightarrow \cT_n$ where for each $\alpha \in
\cT_{n-1}$ the mapping $\alpha f$ is given by
\[
i (\alpha f) = 
\begin{cases}
i \alpha & \mbox{if $1 \leq i \leq n-1$} \\
n & \mbox{if $i=n$.}
\end{cases}
\]   
Since the full transformation semigroup is in $\cB$ the map $f$ is a 
 $\sqsubseteq$-embedding and hence by the extension property, for
 every embedding $\cT_{n-1} \sqsubseteq \cT$ there is a
 $\sqsubseteq$-embedding $g:\cT_n \rightarrow \cT$ such that $a fg = a$
 for all $a \in \cT_{n-1}$. Since $\cT$ is a $\cT_n$-limit semigroup, for every element $s \in \cT$ 
 there is a subsemigroup $U \leq \cT$ with $s \in U$ and $U \cong \cT_k$. Setting $n=k+1$ this 
 gives an embedding $\cT_{n-1} \sqsubseteq \cT$ such that the image of $\cT_{n-1}$ contains the 
 element $s$. But now since \[f(\cT_{n-1}) \subseteq \cT_n \setminus \cS_n \subseteq \langle E(\cT_n) \rangle\]
 it follows that $s \in \langle E(\cT_ng) \rangle$. Since $s$ was arbitrary this proves that $\cT$ is 
 an idempotent generated semigroup.     
Since $\cT$ is locally finite, it is periodic, and hence $\J=\D$. 
%
%
Finally, 
transitivity of $\Aut(\cT)$ on $E(\cT)$ implies that any two principal factors of $\cT$ are isomorphic.  
\end{proof}

\subsection{Proof of Theorem~\ref{prop_t}(2)}

Clearly, a union of $\J$-linear semigroups is $\J$-linear, and thus  
in particular every $\cT_n$-limit semigroup $S$ has the
property that $S / \J$ is a chain. 
%
%
To show that $\cT / \J$ has order type $\mathbb{Q}$, we first observe that as a consequence of \cite[Theorem~1]{HP} we have the following lemma. 

\begin{lem}
If $\phi:S\to T$ is an embedding of regular semigroups and $x,y\in S$ are such that $J_x^S>J_y^S$ then
$J_{x\phi}^T>J_{y\phi}^T$.
\end{lem}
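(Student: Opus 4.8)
The statement to prove is: if $\phi:S\to T$ is an embedding of regular semigroups and $x,y\in S$ satisfy $J_x^S>J_y^S$, then $J_{x\phi}^T>J_{y\phi}^T$. The plan is to deduce this from \cite[Theorem~1]{HP}, which (as the preceding sentence indicates) is the source for the fact that $\cB$-membership forces $\J$-linearity and, more to the point, gives control over how $\J$-classes behave under embeddings of regular semigroups. First I would record the easy half: since $\phi$ is a homomorphism, $S^1 x S^1 \supseteq S^1 y S^1$ immediately gives $T^1(x\phi)T^1 \supseteq T^1(y\phi)T^1$ after applying $\phi$ and enlarging, so $J_{x\phi}^T \geq_{\J} J_{y\phi}^T$ always holds, with no regularity needed. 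The entire content is therefore to rule out $J_{x\phi}^T =_{\J} J_{y\phi}^T$, i.e. to show the inequality stays \emph{strict}.

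For the strictness, I would argue by contradiction: suppose $x\phi\ \J\ y\phi$ in $T$. In a regular semigroup, $\J=\D$ holds for the relevant elements once we are in a situation controlled by regularity (and in any case the elements $x\phi,y\phi$ lie in regular $\J$-classes since $S$, hence its image, consists of regular elements — an embedding of a regular semigroup has regular image, as $a=aba$ transfers). The key point I expect to invoke from \cite[Theorem~1]{HP} is that for an embedding of regular semigroups, two elements of $S$ that are $\J$-related in $T$ must already be $\J$-related in $S$ — equivalently, the embedding induces an \emph{order-embedding} on the partially ordered sets of regular $\J$-classes. Granting that, $x\phi\ \J\ y\phi$ in $T$ would force $x\ \J\ y$ in $S$, contradicting $J_x^S > J_y^S$. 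So $J_{x\phi}^T \neq J_{y\phi}^T$, and combined with $J_{x\phi}^T \geq_{\J} J_{y\phi}^T$ we conclude $J_{x\phi}^T > J_{y\phi}^T$.

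The main obstacle is making precise exactly which statement from \cite[Theorem~1]{HP} is being used and checking its hypotheses apply here. The cited theorem is about amalgamation bases / $\J$-linearity, and the relevant extractable fact is presumably that in an embedding $\phi:S\hookrightarrow T$ of regular semigroups, regular $\D$-classes of $S$ map into regular $\D$-classes of $T$ in an order-preserving and order-reflecting way (this is the kind of "regular $\J$-class structure is rigid under embedding" statement that underlies the theory of amalgamation for regular semigroups, going back to work of Hall). I would therefore phrase the proof as: apply \cite[Theorem~1]{HP} to the embedding $\phi$ to get that $x\ \J\ y$ in $T$ implies $x\ \J\ y$ in $S$ for regular elements, then run the contrapositive together with the trivial $\leq_{\J}$ direction. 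Care is needed only to note that $x$ and $y$ are regular (being elements of the regular semigroup $S$) so the cited result genuinely applies to them, and that the weak inequality direction requires no hypothesis at all.
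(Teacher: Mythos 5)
Your proof is correct and follows exactly the same route as the paper, which offers no argument beyond citing \cite[Theorem~1]{HP}. One caution: the general principle you extract --- that for regular elements, being $\J$-related in $T$ forces being $\J$-related in $S$, i.e.\ that $\phi$ induces an order-embedding of regular $\J$-classes --- is strictly stronger than \cite[Theorem~1]{HP} and is false in general (for instance, two $\J$-incomparable idempotents of the semilattice $E(\cI_n)$, viewed as a regular subsemigroup of $\cI_n$, become $\D$-related in $\cI_n$); the theorem carries the additional hypothesis that the two elements are already $\J$-comparable in $S$, which is precisely what $J_x^S>J_y^S$ supplies here, so your application of it to rule out $x\phi\;\J\;y\phi$ is nonetheless valid.
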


Now we prove a result analogous to the one holding for $\cI$ regarding the action of the 
automorphism group on chains of $\J$-classes, but with a different argument.

\begin{lem}\label{lem_chains}  
Let $r<m\leq n\in\N$. In $\cT_n$, for any $f\in E(D_m)$ there is an $e\in E(D_r)$ such that $ef=fe=e$.
\end{lem}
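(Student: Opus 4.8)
The plan is to work concretely in the full transformation semigroup $\cT_n$ on the set $[n]=\{1,\dots,n\}$, using the standard description of idempotents and Green's relations recalled earlier in the excerpt. Recall that an idempotent $f\in\cT_n$ of rank $m$ is determined by a partition $P$ of $[n]$ into $m$ parts together with a transversal $A$ of $P$ (so $A\perp P$), and $f$ acts as the identity on $A$ while collapsing each part of $P$ onto its unique element of $A$; thus $\im f=A$ and $[n]f=A$, $f\restriction_A=\id_A$. The key multiplicative fact I would use is the elementary criterion for when a product of two idempotents $e$, $f$ is again an idempotent equal to $e$: writing $\im e = A'$ and fixing $e$ with partition $P'$, one has $ef=e$ iff $ef$ fixes $\im e$ pointwise, which happens iff $A'\subseteq\fix(f)=A=\im f$ and $f$ maps $A'$ into $A'$; and dually $fe=e$ iff $f$ collapses according to a coarsening compatible with $P'$. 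The cleanest route is to choose $e$ so that both conditions hold simultaneously.

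The construction I would carry out: given $f\in E(D_m)$ with $\im f = A$, $|A|=m$, and associated partition $P$, pick any subset $A'\subseteq A$ with $|A'|=r$ (possible since $r<m\le n$). Define $e\in\cT_n$ to be the retraction onto $A'$ that agrees with $f$ as much as possible — concretely, let $e$ send each $i\in[n]$ to a chosen element of $A'$ lying in the same ``$A'$-block'' as $if$, where the blocks are obtained by grouping the parts of $P$ according to which element of $A'$ they get sent to; and insist $e\restriction_{A'}=\id_{A'}$. Then $e$ is idempotent of rank exactly $r$, so $e\in E(D_r)$. It remains to check $ef=e$ and $fe=e$. For $ef=e$: $e$ has image $A'\subseteq A=\fix(f)$, so $f$ fixes every element of $\im e$, giving $ef=e$. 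For $fe=e$: one checks $if\cdot e = ie$ for all $i$, i.e. $e$ is constant on each $\ker f$-class in a way compatible with $e$'s own kernel — this holds by construction since $e$ factors through $f$ (we defined $ie$ to depend only on $if$).

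The main obstacle — really the only point requiring care — is making the definition of $e$ on $A'$ coherent with the requirement $e\restriction_{A'}=\id_{A'}$ while also having $e$ factor through $f$; in other words, one must choose the ``representative in $A'$'' for each merged block to be a \emph{fixed point}, so that the composite $fe$ genuinely returns $e$ rather than some conjugate. This is arranged by first partitioning $A$ (equivalently the parts of $P$) into $r$ groups, designating one element of $A'$ inside each group (these become the fixed points of $e$), and letting $e$ send the whole group to that designated element; since $f$ already fixes $A$ pointwise, $f$ does nothing to $\im e=A'$ and the factorisation $e = f e$ is immediate. I would then simply verify $e^2=e$, $\rank e = r$, $ef=e$, $fe=e$ by direct substitution, which is routine. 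This gives the required idempotent $e\in E(D_r)$ with $ef=fe=e$, completing the lemma; note this is the transformation-semigroup analogue, proved by hand, of the inverse-semigroup lemma established earlier via Green's-relations manipulation.
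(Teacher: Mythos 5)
Your proof is correct and is essentially the same construction as the paper's: the paper writes $f$ with kernel classes $A_1,\dots,A_m$ and fixed points $a_i\in A_i$, and defines $e$ by merging $A_r\cup\dots\cup A_m$ into one class sent to $a_r$ while keeping $A_1,\dots,A_{r-1}$ fixed — exactly the special case of your "coarsen the kernel of $f$ into $r$ groups and send each group to a designated fixed point of $f$ inside it". Your version is marginally more general in allowing an arbitrary coarsening, but the key idea (image of $e$ chosen inside $\fix(f)=\im f$, and $e$ factoring through $f$) is identical.
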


\begin{proof}
Assume that
$$
f=\left(\begin{array}{cccc}
A_1 & A_2 & \cdots & A_m\\
a_1 & a_2 & \cdots & a_m
\end{array}\right),
$$
where $a_i\in A_i$. This notation means that $f$ is a transformation from $\cT_n$ with image $\{a_1, a_2, \ldots, a_m\}$ 
such that, for each $i$, the preimage of $a_i$ is the set $A_i$. Since $f$ is an idempotent it follows that $a_i \in A_i$ for all $i$. 
Now if we set set
$$
e=\left(\begin{array}{ccccc}
A_1 & A_2 & \cdots & A_{r-1} & A_r\cup\dots\cup A_m\\
a_1 & a_2 & \cdots & a_{r-1} & a_r
\end{array}\right),
$$
then $e$ is an idempotent since $a_r \in A_r\cup\dots\cup A_m$, we have $e \in D_r$ since $|\im(e)| = r$, and it may then easily be verified that 
$ef=fe=e$.
\end{proof}

\begin{cor}\label{cor_chain}
Given a chain $J_{i_1}<J_{i_2}<\dots <J_{i_m}$ of $\J$-classes in $\cT_n$, 
for all $1 \leq j \leq m$
there exist $e_{i_j}\in J_{i_j}$ such
that $\{e_{i_1},\dots,e_{i_m}\}$ is a subsemigroup isomorphic to an $m$-element chain.
\end{cor}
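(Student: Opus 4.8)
The plan is to obtain this as an immediate iteration of Lemma~\ref{lem_chains}, the only extra ingredient being the transitivity of the natural partial order on the idempotents of a semigroup.

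First I would translate the hypothesis into rank language. In $\cT_n$ two elements are $\J$-related if and only if they have equal rank, and $D_r \leq_\J D_s$ exactly when $r \leq s$; so the given chain $J_{i_1} < J_{i_2} < \dots < J_{i_m}$ is $D_{r_1} < D_{r_2} < \dots < D_{r_m}$ for some ranks $r_1 < r_2 < \dots < r_m \leq n$. Since $\cT_n$ is regular, each $D_{r_j}$ contains an idempotent.

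Next I would construct the chain from the top downwards. Pick any idempotent $e_{i_m} \in D_{r_m}$. Assuming an idempotent $e_{i_{j+1}} \in D_{r_{j+1}}$ has been found, apply Lemma~\ref{lem_chains} with $m$ replaced by $r_{j+1}$, $r$ replaced by $r_j$, and $f = e_{i_{j+1}}$; this produces an idempotent $e_{i_j} \in D_{r_j}$ with $e_{i_j} e_{i_{j+1}} = e_{i_{j+1}} e_{i_j} = e_{i_j}$, that is, $e_{i_j} \leq e_{i_{j+1}}$ in the natural partial order on idempotents. Iterating down to $j = 1$ yields idempotents $e_{i_1} \leq e_{i_2} \leq \dots \leq e_{i_m}$ with $e_{i_j} \in D_{r_j} = J_{i_j}$ for each $j$.

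Finally, since the natural partial order on idempotents is transitive, $e_{i_j} \leq e_{i_k}$ for all $j \leq k$, hence $e_{i_j} e_{i_k} = e_{i_k} e_{i_j} = e_{i_{\min(j,k)}}$ for all $j, k$. Therefore $\{e_{i_1}, \dots, e_{i_m}\}$ is closed under multiplication, and it is a commutative band linearly ordered by $\leq$, i.e.\ a subsemigroup isomorphic to the $m$-element chain. I do not expect a genuine obstacle here: the single point deserving a line of care is the passage from the consecutive compatibilities $e_{i_j} \leq e_{i_{j+1}}$ delivered by Lemma~\ref{lem_chains} to the pairwise compatibility of the whole set, which is precisely transitivity of the natural partial order (and could alternatively be checked directly, since in the proof of Lemma~\ref{lem_chains} each $e_{i_j}$ arises from $e_{i_{j+1}}$ by fusing a terminal family of kernel classes while keeping the initial ones, so the explicit forms visibly multiply as claimed).
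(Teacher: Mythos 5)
Your proof is correct and matches the paper's intent: the corollary is stated there without proof precisely because it is the immediate downward iteration of Lemma~\ref{lem_chains} that you describe, with transitivity of the natural partial order on idempotents (or the explicit kernel-merging form of the constructed idempotents) giving the pairwise relations $e_{i_j}e_{i_k}=e_{i_k}e_{i_j}=e_{i_{\min(j,k)}}$.
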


\begin{lem}\label{J-chains-in-T}
For any two chains $J_0<J_1<\dots<J_k$ and $J'_0<J'_1<\dots<J'_k$ of $\J$-classes in $\cT$ there exists
$\alpha\in\Aut(\cT)$ such that $J_i\alpha=J'_i$ for all $0\leq i\leq k$.
\end{lem}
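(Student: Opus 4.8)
The plan is to reduce the statement about two chains of $\J$-classes in $\cT$ to an application of $\cB$-homogeneity, exactly as in the proof of Corollary~\ref{cor_chains} for $\cI$, but using the tools already assembled for $\cT$ in this subsection. First I would use the fact that $\cT$ is a $\cT_n$-limit semigroup: the chain $J_0<J_1<\dots<J_k$ meets finitely many $\D$-classes, and each is witnessed by elements lying in some subsemigroup $U\leq\cT$ with $U\cong\cT_n$ for a suitably large $n$; likewise for the primed chain I pick $U'\cong\cT_{n'}$. Enlarging within the limit chain if necessary I may in fact take a single $\cT_n$ containing representatives of both chains, but it is cleaner to handle the two chains separately. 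Inside $U\cong\cT_n$ the chain $J_0<\dots<J_k$ corresponds to a strictly increasing chain of ranks, so by Corollary~\ref{cor_chain} there is a subsemigroup $\{e_0,e_1,\dots,e_k\}$ of $U$ with each $e_i\in J_i$ and $e_ie_j=e_je_i=e_{\min(i,j)}$, i.e.\ a $(k{+}1)$-element chain of idempotents. Doing the same in $U'$ produces a $(k{+}1)$-element chain $\{e'_0,\dots,e'_k\}$ with $e'_i\in J'_i$.

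Now $\{e_0,\dots,e_k\}$ and $\{e'_0,\dots,e'_k\}$ are isomorphic finite chains (hence finite semilattices), and finite semilattices that are chains are $\J$-linear, hence belong to $\cB$ (by the discussion in Subsection~\ref{subsec_propB}, any $\J$-linear finite inverse semigroup lies in $\cB$, and a chain is such). Therefore $\cB$-homogeneity of $\cT$ applies to the isomorphism $e_i\mapsto e'_i$: it extends to some $\alpha\in\Aut(\cT)$. Since automorphisms of a semigroup permute $\J$-classes, and $\alpha$ sends $e_i\in J_i$ to $e'_i\in J'_i$, we get $J_i\alpha=J'_i$ for all $0\leq i\leq k$, which is exactly the claim.

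The one point that needs a little care — and the only place where the $\cT$-argument differs structurally from the $\cI$-argument — is the very first reduction: passing from the chain of $\J$-classes in $\cT$ to a finite $\cT_n$ in which that chain is realised \emph{as a strict chain of ranks}. This is where the preceding lemma of this subsection (the one derived from \cite[Theorem~1]{HP}, stating that a regular-semigroup embedding is $\J$-order-preserving in the strict sense) is used: it guarantees that distinct $\J$-classes $J_i$ of $\cT$, when intersected with $U\cong\cT_n$, remain in distinct $\D$-classes of $U$ with the strict order preserved, so that Corollary~\ref{cor_chain} can be invoked with a genuine strictly increasing chain of ranks. I expect this bookkeeping — choosing $U$ large enough that all $k{+}1$ classes are met and verifying the strictness of the induced chain — to be the main (though routine) obstacle; once the finite chain of idempotents is in hand, the appeal to membership in $\cB$ and to $\cB$-homogeneity is immediate.
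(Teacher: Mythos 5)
Your proposal is correct and follows essentially the same route as the paper: pass to a finite $\cT_n$ inside the limit containing representatives of both chains, apply Corollary~\ref{cor_chain} to produce two $(k{+}1)$-element chains of idempotents $e_i\in J_i$ and $f_i\in J'_i$, note that chain semilattices belong to $\cB$ (the paper cites \cite{OP} directly for this), and extend $e_i\mapsto f_i$ by $\cB$-homogeneity. Your extra care about the strictness of the induced chain of $\J$-classes inside the finite $\cT_n$ is a point the paper glosses over, and your justification of it (via $\J$-linearity of $\cT_n$ together with the lemma derived from \cite[Theorem~1]{HP}) is sound.
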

\begin{proof}
In $\cT$, choose representatives $a_i\in J_i$ and $b_i\in J'_i$, $0\leq i\leq k$. Since $\cT$ is a $\cT_n$-limit
semigroup, it is a union of its subsemigroups $\cT=\bigcup_{j\geq 0}T_j$ such that $T_j\cong\cT_{i_j}$. So there
is an $m\in\N$ such that $a_0,a_1,\dots,a_k,b_0,b_1,\dots,b_k\in T_m\cong\cT_{i_m}$. 
Now, by Corollary~\ref{cor_chain}, within $T_m$ we have idempotents $e_i\J a_i$ and $f_i\J b_i$ such that 
$e_0<e_1<\dots<e_k$ and $f_0<f_1<\dots<f_k$. Now $\phi:\{e_0,\dots,e_k\}\to\{f_0,\dots,f_k\}$, $e_i\phi=f_i$, is
an isomorphism between two members of $\cB$, since by \cite{OP} any chain semilattice belongs to $\cB$, and thus it
extends to $\hat\phi\in\Aut(\cT)$ with $J_i\hat\phi=J'_i$ for $0\leq i\leq k$.
\end{proof}
Since $\cT$ is countably infinite,  Theorem~\ref{prop_t}(2) is now an immediate consequence of Lemma~\ref{J-chains-in-T}. 


\section{The Graham--Houghton graph of $\J$-classes in $\cT$}

In this section we prove part (5) of Theorem~\ref{prop_t}. Since the principal factors of $\cT$ are all isomorphic to each other, there is (up to isomorphism) only one Graham--Houghton graph 
$\Gamma=\Gamma(\cT)$ to investigate. Our aim in this section is to show that $\Gamma$ is isomorphic to 
the countable random
bipartite graph. This bipartite graph was defined in Subsection~\ref{subsec_hom} above. 
We shall find it useful to make use of the following alternative characterisation of the countable random bipartite graph. 

Let $\Gamma$ be a bipartite graph with bipartition $V\Gamma = X \cupdot Y$. We say $\Gamma$ satisfies property $(\diamond)$ if 
\begin{enumerate}
\item $|X| = |Y| = \aleph_0$, 
\item for every pair of non-empty finite subsets $A$ and $B$ of $X$ with $A \cap B = \varnothing$ there is a vertex  
$y \in Y$ such that $y \sim a$ for all $a \in A$, and $y \not\sim b$ for all $b \in B$, and 
\item for every pair of non-empty finite subsets $C$ and $D$ of $Y$ with $C \cap D = \varnothing$ there is a vertex 
$x \in X$ such that $x \sim c$ for all $c \in C$, and $y \not\sim d$ for all $d \in D$.
\end{enumerate}
It is easy to see that this is equivalent to the defining property of the countable random bipartite graph given in Subsection~\ref{subsec_hom}. Thus, any countable bipartite graph satisfying property $(\diamond)$ is isomorphic to the random bipartite graph. 

Throughout, let $D$ be a fixed $\D$-class of $\cT$. We use $\Lambda$ to denote the set of all $\L$-classes of
$D$, while $I$ will stand for the set of $\R$-classes of $D$. Recall  from Subsection \ref{subsec_comb-struct} that $\Gamma=GH(D)$ has $V\Gamma=I\cupdot\Lambda$ and
edges $(i,\lambda) \in E\Gamma$ if and only if $H_{i\lambda}$ is a group.

Our aim is in fact to prove that $\Gamma$ is has property $(\diamond)$, namely:
\begin{itemize}
\item[(a)] $|I| = |\Lambda| = \aleph_0$;
\item[(b)] for any non-empty subsets $\Omega,\Sigma\subseteq\Lambda$ with $\Omega\cap\Sigma=\es$ there exists $i\in I$
such that all $H_{i\omega}$ ($\omega\in\Omega$) are groups and none of the $H_{i\sigma}$ ($\sigma\in\Sigma$)
are groups;
\item[(c)] the dual of (b) with $I$ and $\Lambda$ interchanged.
\end{itemize}

We will show that property (a) follows from the construction of $\cT$. 
We will prove (b) directly, while we will provide an indirect proof of (c) by showing that $\cT\cong\cT^{opp}$ and then appeal to (b).
Proving (b) relates to the following combinatorial question.
Recall that if $P$ is a partition of $[m] =\{1,2,\ldots,m\}$, and $A$ is a susbet of $[m]$,  we write $P \perp A$ to mean that $A$ is a transversal of $P$. 
Let $A_1, A_2, \ldots, A_k, B_1, B_2, \ldots, B_l$ be a family of distinct $t$-element subsets of $[m]$. Then one can ask under what conditions on these sets can we guarantee that there is a partition $P$ of $[m]$ into $t$ non-empty parts, such that $P \perp A_i$ for all $1 \leq i \leq k$ and $P \not\perp B_j$ for all $1 \leq j \leq l$? For example, if the sets $A_i,B_j$ are all pairwise disjoint then it is easy to find such a partition $P$. More generally, we can give a sufficient condition for such a set $P$ to exist, given by measuring the extent to which the sets $A_i,B_j$ overlap with each other. 
\begin{lem}[Flower Lemma]\label{lem_flower}
Let $A_1, A_2, \ldots, A_k, B_1, B_2, \ldots, B_l$ be a family of distinct $t$-element subsets of $[m]$ with $k,l \geq 1$ and  $t \geq 2$. 

For each $1 \leq i \leq k$ set 
\[
A_i' = A_i \setminus (A_1 \cup \ldots \cup A_{i-1} \cup A_{i+1} \cup 
\ldots \cup
A_k \cup B_1 \cup \ldots \cup B_l),
\]
and for $1 \leq j \leq l$ set 
\[
B_j' = B_j \setminus (A_1 \cup \ldots \cup A_k \cup B_1 \cup \ldots 
\cup B_{j-1} \cup B_{j+1} \cup \ldots 
\cup B_l).
\]
Let $Y = A_1 \cup \ldots \cup A_k \cup B_1 \cup \ldots \cup B_l$, let $Y' = A_1' \cup \ldots \cup A_k' \cup B_1' \cup \ldots \cup B_l'$ and set $M = Y \setminus Y'$. If $|M| < t$ then there exists a partition $P$ of $[m]$ into $t$ non-empty parts, such that $P \perp A_i$ for all $1 \leq i \leq k$ and $P \not\perp B_j$ for all $1 \leq j \leq l$.
\end{lem}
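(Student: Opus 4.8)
The plan is to construct the desired partition $P$ explicitly, treating the "petals" $A_i'$ and $B_j'$ as the pieces of the sets that can be controlled independently, and the "core" $M$ as the troublesome overlap region. First I would record the basic bookkeeping: since each $A_i$ has exactly $t$ elements and $A_i' = A_i \setminus M$ (because by definition $A_i'$ is obtained from $A_i$ by removing all elements shared with any other set in the family, and $Y'$ collects precisely the elements lying in a unique set of the family while $M = Y \setminus Y'$), we have $|A_i'| = t - |A_i \cap M|$, and similarly for $B_j'$. Note the $A_i'$, $B_j'$ are pairwise disjoint and disjoint from $M$. The hypothesis $|M| < t$ is exactly what will guarantee we have enough room to build $t$ parts.

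The main idea is: build $t$ parts $P_1, \dots, P_t$. Put the elements of $M$ into the parts in some way, then distribute each petal $A_i'$ so that it hits every part not already containing an element of $A_i$, and distribute each petal $B_j'$ so that $B_j$ misses at least one part (i.e. $B_j$ is forced to have two elements in some common part, or to skip a part). Concretely I would first choose, for each $i$, an injection of $A_i \cap M$ into $\{P_1, \dots, P_t\}$ — this is where $|A_i \cap M| \le |M| < t$ matters, and in fact we need $A_i \cap M$ to occupy distinct parts since $A_i$ is to be a transversal; here one must be careful that the $M$-elements get assigned to parts consistently across all the $A_i$ simultaneously, which is the crux. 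A clean way: fix an arbitrary assignment $\mu\colon M \to \{1,\dots,t\}$ of the core elements to parts, chosen so that for every $i$ the restriction $\mu|_{A_i\cap M}$ is injective; then the "missing" parts for $A_i$ are exactly $\{1,\dots,t\} \setminus \mu(A_i\cap M)$, of which there are $|A_i'| = t - |A_i\cap M|$ many, so we can biject $A_i'$ onto those missing parts. Since the $A_i'$ are pairwise disjoint this defines $\mu$ consistently on all of $Y'\cap(\bigcup A_i')$; each such $A_i$ then meets every part exactly once, i.e. $P \perp A_i$.

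For the $B_j$ I would do the opposite: assign the elements of $B_j'$ all to a single part, say part $1$ (or any fixed part), so that $B_j$ has at least two elements in a common part unless $B_j' = \varnothing$; if some $B_j' = \varnothing$ then $B_j \subseteq M \cup \bigcup_i A_i$, and I'd need a separate small argument — but actually if $B_j'=\varnothing$, distinctness of the sets still forces $B_j\neq A_i$ for all $i$, and since $|B_j|=t=|A_i|$, $B_j$ cannot be a transversal if... — this subcase is exactly the delicate point and I expect it to be the main obstacle. One likely fix is to choose the fixed "dumping" part for the $B_j'$ more cleverly (e.g. for each $j$ pick a part that already contains an element of $B_j\cap M$, when such exists, thereby creating a collision), and handle the finitely many $j$ with $B_j'=\varnothing$ by a direct inspection using $|M|<t$ and distinctness. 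Finally, the leftover elements of $[m]$ not in $Y$ can be thrown into any nonempty parts, and one checks all $t$ parts end up nonempty (using $t\ge 2$, $k,l\ge1$, and that the petals plus $M$ already populate things, adjusting if some part is empty by moving a spare element). I would then verify $P\perp A_i$ and $P\not\perp B_j$ hold by the construction.

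So the key steps, in order: (1) set up the petal/core decomposition and record $|A_i'| = t - |A_i\cap M|$, disjointness, and $|M|<t$; (2) choose a core assignment $\mu\colon M\to\{1,\dots,t\}$ with $\mu|_{A_i\cap M}$ injective for every $i$; (3) for each $i$, biject $A_i'$ onto the $t-|A_i\cap M|$ parts missed by $\mu(A_i\cap M)$, securing $P\perp A_i$; (4) for each $j$, force a collision in $B_j$ by dumping $B_j'$ into a single part (or a part meeting $B_j\cap M$), securing $P\not\perp B_j$, with a separate argument for the $B_j'=\varnothing$ case; (5) distribute the remaining elements of $[m]$ and patch up any empty parts. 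The step I expect to be genuinely subtle is (2) together with the $B_j' = \varnothing$ subcase in (4): ensuring the single assignment $\mu$ simultaneously respects injectivity on every $A_i\cap M$ (which may fail a priori!) and simultaneously creates a collision on every $B_j$ that lives inside the core — this is precisely where the hypothesis $|M|<t$ is doing real work, and getting the bookkeeping right there is the heart of the lemma.
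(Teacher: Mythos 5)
Your construction is essentially the paper's: place the core $M$ first, complete each $A_i$ to a transversal using its petal $A_i'$, dump each $B_j'$ into a single part to force a collision, and throw $[m]\setminus Y$ anywhere. However, you leave two points explicitly unresolved, and both dissolve immediately once you use the hypothesis $|M|<t$ slightly more aggressively. First, the assignment $\mu\colon M\to\{1,\dots,t\}$ with every restriction $\mu|_{A_i\cap M}$ injective does not require any delicate simultaneous bookkeeping: since $|M|<t$ you can simply make $\mu$ injective on all of $M$ (this is exactly what the paper does, putting the elements of $M$ into distinct parts $P_1,\dots,P_{|M|}$), and then every restriction is injective for free. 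Note here that $A_i\setminus A_i'=A_i\cap M$ and $B_j\setminus B_j'=B_j\cap M$, since an element of $A_i$ lying in another set of the family belongs to no petal and hence to $M$. Second, the subcase $B_j'=\varnothing$ that you expect to be ``the main obstacle'' never occurs: $|B_j\setminus B_j'|=|B_j\cap M|\le|M|<t=|B_j|$, so every petal $B_j'$ (and likewise every $A_i'$) is non-empty. With $B_j'\ne\varnothing$ your ``clever dumping'' rule is exactly the paper's step: if $B_j'=B_j$ put all of $B_j'$ into $P_1$ (a collision since $|B_j|=t\ge 2$), and otherwise put $B_j'$ into the part already containing some $b\in B_j\cap M$, giving two elements of $B_j$ in one part. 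Your final worry about empty parts also needs no patching: since $k\ge1$ and $A_1$ is arranged to be a transversal, every part already receives an element of $A_1$.

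One further remark: the disjointness you record ($A_i'\cap B_j=\varnothing$ and $B_j'\cap A_i=\varnothing$ by definition of the petals) is what guarantees that placing the $B_j'$ cannot spoil the transversality of any $A_i$ and vice versa; it is worth stating this explicitly when you write the verification, as it is the reason the two halves of the construction do not interfere.
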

\begin{proof}
We call $Y$ the flower, $M$ the flower head and the sets $A_i'$, $B_j'$ the petals; see Figure~\ref{fig_flower}. 
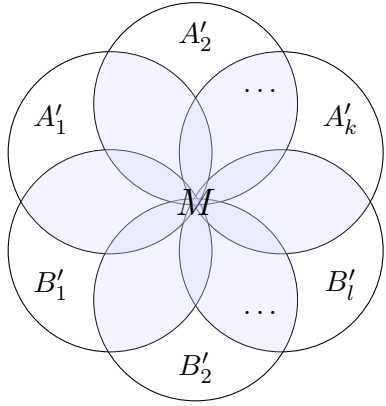
\begin{figure}

\def\firstcircle{(390:1.3cm) circle (1.34cm)}
\def\secondcircle{(90:1.3cm) circle (1.34cm)}
\def\thirdcircle{(150:1.3cm) circle (1.34cm)}
\def\fourthcircle{(210:1.3cm) circle (1.34cm)}
\def\fifthcircle{(270:1.3cm) circle (1.34cm)}
\def\sixthcircle{(330:1.3cm) circle (1.34cm)}

\begin{tikzpicture}

        \draw \firstcircle node {};
        \draw \secondcircle node {};
        \draw \thirdcircle node {};
        \draw \fourthcircle node {};
        \draw \fifthcircle node {};
        \draw \sixthcircle node {};

    \begin{scope}[fill opacity=0.3]
      \clip \firstcircle;
      \fill[blue!15] \secondcircle;
    \end{scope}

    \begin{scope}[fill opacity=0.3]
      \clip \secondcircle;
      \fill[blue!15] \thirdcircle;
    \end{scope}

    \begin{scope}[fill opacity=0.3]
      \clip \thirdcircle;
      \fill[blue!15] \fourthcircle;
    \end{scope}

    \begin{scope}[fill opacity=0.3]
      \clip \fourthcircle;
      \fill[blue!15] \fifthcircle;
    \end{scope}

    \begin{scope}[fill opacity=0.3]
      \clip \fifthcircle;
      \fill[blue!15] \sixthcircle;
    \end{scope}
    
    \begin{scope}[fill opacity=0.3]
      \clip \sixthcircle;
      \fill[blue!15] \firstcircle;
    \end{scope}

\node (A1) at (150:2.2cm) {$A_1'$};
\node (A2) at (90:2.2cm) {$A_2'$}; 
\node (dots) at (60:1.7cm) {$\ldots$};  
\node (Ak) at (390:2.2cm) {$A_k'$};   

\node (B1) at (210:2.2cm) {$B_1'$};
\node (B2) at (270:2.2cm) {$B_2'$}; 
\node (dots) at (-60:1.7cm) {$\ldots$};  
\node (Bk) at (330:2.2cm) {$B_l'$}; 

\node (M) at (0:0cm) {\Large $M$};

\end{tikzpicture}

\caption{Illustration of Lemma~\ref{lem_flower}. The set $Y$ decomposes into the disjoint union of the shaded part $M$, the head, and the non-shaded parts $A_i'$ and $B_i'$, the petals. }\label{fig_flower}
\end{figure} 
The definitions in the statement  decompose the flower into a disjoint union of its head and petals as follows:
\[
Y = M \cupdot A_1' \cupdot \ldots \cupdot A_k' \cupdot B_1' \cupdot \ldots \cupdot B_l'. 
\] 
Since $|A_i|= |B_j| = t$ and $|M| < t$ it follows that each of the petals 
$A_i'$ and $B_j'$ is non-empty. Note that in general $Y$ could be a proper subset of $[m]$. We construct a partition $P$ with the desired properties in the following way. Begin with sets $P_1, P_2, \ldots, P_t$ all empty. We will describe an algorithm for adding all the elements from $[m]$ to these sets, in such a way as to create non-empty sets $P_1, P_2, \ldots, P_t$ defining the required partition $P$ of $[m]$.

\begin{itemize}[leftmargin=0.8cm]
\item[(i)] Place the elements from $M$ in distinct sets, say $P_1, \ldots, P_{|M|}$. This is possible since $t > |M|$. 
\item[(ii)] For each $1 \leq i \leq k$ the elements from $A_i \setminus A_i'$ have already been distributed among the $P_i$, each one in a distinct set. We now add the remaining elements from $A_i'$ to the sets $P_i$ in such a way that $A_i$ is a transversal of the sets $P_i$. This is possible since there are $t$ sets, and $|A_i|=t$.    
\item[(iii)] For each $1 \leq j \leq l$, add all of the elements from $B_j'$ to the classes in such a way that $B_j$ is not a transveral of the family of sets $P_i$. If $B_j'=B_j$, since $|B_j| = t \geq 1$ this may done by assigning all of the elements to the set $P_1$. Otherwise, choose $b \in B_j \setminus B_j'$ and put all of the elements from the non-empty set $B_j'$ into the unique set $P_j$ which contains $b$. 
\item[(iv)] Put all of the elements from $[m] \setminus Y$ into the class $P_1$. 
\end{itemize}
It is now easy to see that the partition $P$ with parts $P_1, P_2, \ldots, P_t$ has the property that $A_i \perp P$ for all $1 \leq i \leq k$ while $P \not\perp B_j$ for all $1 \leq j \leq l$.
\end{proof}

The main vehicle for proving (b) above (and thus Theorem~\ref{prop_t}(5)) is the following result about finite full transformation semigroups.
 
\begin{pro}\label{gh-a}
Let $n\in\N$ be such that $n\geq 3$ and let $1<r<n$. Then there is an $m\in\N$ and an embedding $\phi:\cT_n\to
\cT_m$ such that for any collection of elements $a_1,\dots,a_k,b_1,\dots,b_l\in D_r\subseteq\cT_n$, with $k\geq 1$
and $l\geq 1$, all coming from distinct $\L$-classes, there exists an element $c\in\cT_m$ such that in $\cT_m$ we have $c\D a_1\phi$
and $c\D b_1\phi$, and $R_c\cap L_{a_i\phi}$ is a group for all $1\leq i\leq k$ while $R_c\cap 
L_{b_j\phi}$ is not a group for all $1\leq j\leq l$.
\end{pro}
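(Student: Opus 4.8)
The plan is to produce a single embedding $\phi:\cT_n\to\cT_m$ which spreads the images of the rank-$r$ elements of $\cT_n$ far enough apart inside $[m]$ that the Flower Lemma (Lemma~\ref{lem_flower}) applies uniformly to every relevant family, and then to read off $c$ from the resulting partition using Green's relations in $\cT_m$. For a parameter $N\geq 1$, consider the faithful right action of $\cT_n$ on $[n]^N$ given coordinatewise by $(x_1,\dots,x_N)\cdot f=(x_1f,\dots,x_Nf)$; this yields an embedding $\phi=\phi_N:\cT_n\to\cT_m$ with $m=n^N$. The elementary facts I would record are: for $f\in\cT_n$ one has $\im(f\phi)=(\im f)^N$, so $\rank(f\phi)=(\rank f)^N$; and for subsets $A,B\subseteq[n]$ one has $A^N\cap B^N=(A\cap B)^N$, while $A^N=B^N$ forces $A=B$.

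I would then fix $N$ once and for all, depending only on $n$ and $r$, large enough that $r^N>\binom{\binom{n}{r}}{2}(r-1)^N$; this is possible because $r\geq 2$ gives $r/(r-1)>1$. Now take any $a_1,\dots,a_k,b_1,\dots,b_l\in D_r\subseteq\cT_n$ with $k,l\geq 1$ lying in pairwise distinct $\L$-classes, and write $A_i=\im a_i$ and $B_j=\im b_j$; these are $k+l$ distinct $r$-element subsets of $[n]$, so in particular $k+l\leq\binom{n}{r}$. Put $t=r^N$. Then $A_1^N,\dots,A_k^N,B_1^N,\dots,B_l^N$ are distinct $t$-element subsets of $[m]$ with $t\geq 2$. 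In the notation of Lemma~\ref{lem_flower} the flower head $M$ is precisely the set of points of $[m]$ lying in at least two of these $t$-sets; since the intersection of any two distinct ones equals $(A_i\cap A_j)^N$ or the analogous expression involving the $B_j$, hence has at most $(r-1)^N$ elements, and since there are $\binom{k+l}{2}\leq\binom{\binom{n}{r}}{2}$ pairs, we obtain $|M|\leq\binom{\binom{n}{r}}{2}(r-1)^N<r^N=t$. The Flower Lemma now furnishes a partition $P$ of $[m]$ into $t$ non-empty parts with $P\perp A_i^N$ for all $i$ and $P\not\perp B_j^N$ for all $j$.

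To finish, I would take $c\in\cT_m$ to be any transformation with $\ker c=P$ and $\im c=A_1^N$; such a $c$ exists because $|P|=t=|A_1^N|$. Then $\rank c=t=(\rank a_1)^N=\rank(a_1\phi)$ and likewise $\rank(b_1\phi)=r^N=t$, so, since $\D=\J$ in $\cT_m$ depends only on rank, we have $c\;\D\;a_1\phi$ and $c\;\D\;b_1\phi$ in $\cT_m$. Moreover $R_c\cap L_{a_i\phi}=H_{P,A_i^N}$, which is a group because $A_i^N\perp P$, whereas $R_c\cap L_{b_j\phi}=H_{P,B_j^N}$, which is not a group because $P\not\perp B_j^N$. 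Thus $c$ has all the stated properties.

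The step I expect to be the main obstacle is settling on the right embedding: an inclusion-type map $\cT_n\hookrightarrow\cT_m$, or a single application of Cayley's theorem, does not spread images far enough to control their pairwise overlaps, and one must recognise that a coordinatewise power (equivalently, for this purpose, an iterated power-set construction) applied sufficiently many times is exactly what drives the flower head $M$ below the image size. Once the estimate $|M|<t$ is secured, the passage back through Green's relations in $\cT_m$ is routine.
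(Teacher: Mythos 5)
Your proof is correct, and it reaches the same combinatorial core as the paper (the Flower Lemma, followed by reading off $c$ from a partition $P$ via $\ker c = P$ and the fact that $H_{P,A}$ is a group iff $A\perp P$), but the embedding you use to make the Flower Lemma applicable is genuinely different. The paper adjoins to $X=[n]$ the set $Z=R_\eps\cup\{0\}$, where $\eps$ is the image of a fixed rank-$r$ idempotent under an auxiliary rank-inflating embedding $\cT_X\to\cT_{X\cup Y}$, and lets $\cT_n$ act on $X\cupdot Z$; the point is that $\im(a_i\psi)=A_i\cup H_{a_i\iota}\cup\{0\}$ with the $\H$-classes $H_{a_i\iota}$ pairwise disjoint (by a regularity argument) and of size $(r+|Y|)!$, so all overlaps are confined to a set of size at most $n+1$, which is beaten by choosing $|Y|$ with $(r+|Y|)!>n+1$. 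Your diagonal-power embedding $\cT_n\to\cT_{n^N}$ instead exploits $\im(f\phi)=(\im f)^N$ and $A^N\cap B^N=(A\cap B)^N$, so pairwise overlaps have size at most $(r-1)^N$ against image size $r^N$, and the head estimate $|M|\le\binom{\binom{n}{r}}{2}(r-1)^N<r^N$ follows from $(r/(r-1))^N\to\infty$ (this is where $r\geq 2$, i.e.\ $r>1$, is used). Your route is arguably more elementary: it avoids the semigroup-theoretic apparatus (the action on an $\R$-class with zero, Green's Lemma to compute $|H_{\alpha\iota}|$, and the regularity argument for disjointness of the $\H$-classes), replacing it with a purely set-theoretic computation of images and intersections; the price is a larger $m$ (of order $n^N$ with $N$ roughly $\log\binom{\binom{n}{r}}{2}/\log(r/(r-1))$, versus the paper's $n+|R_\eps|+1$), which is immaterial for the application. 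All the verifications you leave implicit check out: the $A_i,B_j$ are distinct because distinct $\L$-classes in $\cT_n$ have distinct images, hence $k+l\le\binom{n}{r}$ uniformly, so a single $N$ depending only on $n$ and $r$ serves every admissible family, as the proposition requires.
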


\begin{proof}
Set $X=\{1,\dots,n\}$; also, let $Y$ be a finite non-empty set
with $Y\cap X=\es$. Define $\iota:\cT_X\to\cT_{X\cup Y}$ such that $\alpha\iota$ is given by 
$$
z(\alpha\iota) = \left\{\begin{array}{ll}
z\alpha & \text{if }z\in X,\\
z & \text{if }z\in Y.
\end{array}\right.
$$
Note that $\iota$ is an injective semigroup homomorphism, and that 
for any $\alpha\in\cT_X$ we have $\rank(\alpha\iota)=\rank\alpha+|Y|$.

Fix $e\in E(D_r)$ in $\cT_X$. Let $R_e$ be the $\R$-class of $e$ in $\cT_X$. Set $\eps=e\iota\in\cT_{X\cup Y}$.
The semigroup $\cT_X$ acts on the set $R_\eps\cup\{0\}$ 
where the action
$(R_\eps\cup\{0\})\times\cT_X\to R_\eps\cup\{0\}$ is given by
$$
\gamma\cdot\alpha = \left\{\begin{array}{ll}
\gamma(\alpha\iota) & \text{if }\gamma(\alpha\iota)\in R_\eps,\\
0 & \text{otherwise.}
\end{array}\right.
$$
Here, $\gamma(\alpha\iota)$ is simply a product of two elements of $\cT_{X\cup Y}$. Using the fact that $\iota$ is a homomorphism, it follows easily that this is  a right action of $\cT_X$ on $R_\eps\cup\{0\}$.

Set $Z=R_\eps\cup\{0\}$. The semigroup $\cT_X$ acts on $X$ in the obvious way and it also acts on $Z$ as above; thus $\cT_X$
acts on $X\cupdot Z$. This action gives rise to a homomorphism $\psi:\cT_X\to\cT_{X\cup Z}$ where for $\alpha\in\cT_X$ the mapping 
$\alpha\psi$
is given by 
$$
u(\alpha\psi) = \left\{\begin{array}{ll}
u\alpha & \text{if }u\in X,\\
u\cdot\alpha & \text{if }u\in Z=R_\eps\cup\{0\}.
\end{array}\right.
$$
The homomorphism $\psi$ is injective since if $\alpha,\beta\in\cT_X$ with $\alpha\neq\beta$ then there is an 
$x\in X$ with $x\alpha\neq x\beta$. Consequently, $x(\alpha\psi)\neq x(\beta\psi)$, so $\alpha\psi\neq\beta\psi$.

\begin{lem}\label{lem_really}
Let $\alpha\in D_r\subseteq \cT_X$. Then
$$
\im(\alpha\psi) = \im\alpha\cup H_{\alpha\iota}^{\cT_{X\cup Y}}\cup\{0\}.
$$
\end{lem}

\begin{proof}
The restriction of $\alpha \psi$ to $X$ gives $X \alpha = \im \alpha$. Now consider the set $Z (\alpha \psi)$, that is, the image of the set $Z = R_\eps\cup\{0\}$ under $\alpha \psi$. For any $q \in R_\epsilon$ if $q (\alpha \psi) = q \cdot \alpha \neq 0$ then $q \cdot \alpha = q (\alpha \iota) \in H_{\alpha\iota}^{\cT_{X\cup Y}}$. Therefore $Z (\alpha \psi) \subseteq  H_{\alpha\iota}^{\cT_{X\cup Y}}\cup\{0\}$. 

Conversely, since $\cT_{X \cup Y}$ is regular, there is an idempotent $f$ in the $\R$-class of $\alpha \iota$ in $\cT$. Now $f (\alpha \iota) = \alpha \iota$ and hence by Green's Lemma \cite[Lemma 2.2.1]{Howie}, in $\cT_{X \cup Y}$, right multiplication by $\alpha \iota$ defines a bijection $L_f \rightarrow L_{\alpha \iota}$ which maps the $\H$-class $R_\varepsilon \cap L_f$ bijectively onto the set $H_{\alpha\iota}^{\cT_{X\cup Y}}$. Thus $H_{\alpha\iota}^{\cT_{X\cup Y}}\cup\{0\} \subseteq Z (\alpha \psi)$. This completes the proof. 
\end{proof}

\noindent \emph{Claim.} 
Let $\alpha,\beta\in D_r\subseteq \cT_X$. If $(\alpha,\beta)\not\in\L$ then $H_{\alpha\iota}^{\cT_{X\cup Y}}\cap
H_{\beta\iota}^{\cT_{X\cup Y}}=\es$.
\begin{proof}[Proof of claim]
Since $\cT_{X \cup Y}$ is a regular semigroup and $\cT_X$ is a regular subsemigroup of $\cT_{X \cup Y}$, if  $(\alpha\iota,\beta\iota)\in\L$ it would follow that $(\alpha,\beta)\in\L$ (see \cite[Proposition 2.4.2]{Howie}), a contradiction.  
\end{proof}

Note that 
$$
|H_{\alpha\iota}^{\cT_{X\cup Y}}|=(\rank\eps)!=(r+|Y|)!
$$
In particular, this can be made arbitrarily large by varying $Y$.

Now choose $Y$ so that $(r+|Y|)!>n+1$ and let $a_1,\dots,a_k,b_1,\dots,b_l\in D_r\subseteq\cT_X=\cT_n$ be as in
the statement of Proposition \ref{gh-a}. Further, let $A_i=\im a_i$ and $B_i=\im b_i$ for all $i$. Set
$C_i=H_{a_i\iota}$ and $D_i=H_{b_i\iota}$. By the claim immediately above 
for any $i,j$ we have $C_i\cap C_j=\es$,
$C_i\cap D_j=\es$ and $D_i\cap D_j=\es$ whenever the sets are distinct. By Lemma~\ref{lem_really}, we have  
$$
\im(a_i\psi)=A_i\cup C_i\cup\{0\},
\quad 
\im(b_i\psi)=B_i\cup D_i\cup\{0\},
$$
and since all $a_i$ and $b_j$ are elements of $\cT_n$ it follows that  
$$
|A_1\cup\dots\cup A_k\cup B_1\cup\dots\cup B_l\cup\{0\}|\leq n+1.
$$

We are now in a position to apply Lemma~\ref{lem_flower}.  
Set 
$\mathbb{A}_i = \im(a_i\psi)=A_i\cup C_i\cup\{0\}$ for $1 \leq i \leq k$ and 
$\mathbb{B}_i = \im(a_i\psi)=B_i\cup D_i\cup\{0\}$ for $1 \leq i \leq l$. 
For $i \neq j$ we have 
\[
C_i\cap C_j=\es, \; C_i\cap D_j=\es \ \mbox{and}  \ D_i\cap D_j=\es, 
\]
and for all $i$ we have 
\[
C_i \cap (A_1\cup\dots\cup A_k\cup B_1\cup\dots\cup B_l\cup\{0\}) = \es, 
\]
and
\[
D_i \cap (A_1\cup\dots\cup A_k\cup B_1\cup\dots\cup B_l\cup\{0\}) = \es.  
\]
In the terminology of Lemma~\ref{lem_flower} the sets $C_i$ and $D_j$ are subsets of the petals. Specifically, for all $i$ and $j$ we have
$
C_i \subseteq \mathbb{A}_i'
$
and
$
D_j \subseteq \mathbb{B}_j'. 
$
Also in the terminology and notation of Lemma~\ref{lem_flower} we have the head of the flower $M$ satisfies 
\[
M \subseteq A_1\cup\dots\cup A_k\cup B_1\cup\dots\cup B_l\cup\{0\}
\]
and since $A_i = \im(a_i)$, $B_j = \im(b_j)$ with $a_i, b_j \in D_r$ we have 
$
M \subseteq \{1, 2, \ldots, n \} \cup \{ 0 \}
$
and so $|M| \leq n+1$. We chose $Y$ so that $(r+|Y|)!>n+1$. It follows that 
\[
|\mathbb{A}_i| = |\mathbb{B}_i| = r + (r + |Y|)! + 1 > (r + 1) + (n + 1) \geq (r+1) + |M| > |M|.   
\]
So we have a family of subsets $\mathbb{A}_i$, $\mathbb{B}_j$ of $X \cup Z$ all of size 
\[
|\mathbb{A}_i| = |\mathbb{B}_i| = r + (r + |Y|)! + 1. 
\]
such that $|\mathbb{A}_i| = |\mathbb{B}_j| > |M|$ for all $i$ and $j$. Thus the conditions of Lemma~\ref{lem_flower} are satisfied. Applying Lemma~\ref{lem_flower} we conclude that there exists a partition  $P$ of $X \cup Z$ into $r + (r + |Y|)! + 1$ non-empty parts, such that $P \perp A_i$ for all $1 \leq i \leq k$ and $P \not\perp B_j$ for all $1 \leq j \leq l$. Let $c \in \cT_{X \cup Z}$ be a mapping with $\ker(c) = P$. Then we have $X = \{1,2, \ldots, n\}$ and we have constructed an injective homomorphism $\psi: \cT_X \rightarrow \cT_{X \cup Z}$, and found an element $c \in \cT_{X \cup Z}$ such that $c\D a_1\psi$ and $c\D b_1\psi$ and $R_c\cap L_{a_i\psi}$ is a group for all $1\leq i\leq k$ and $R_c\cap 
L_{b_j\psi}$ is not a group for all $1\leq j\leq l$. This completes the proof of Proposition~\ref{gh-a}.   
\end{proof}  

\subsection{Proof of Theorem~\ref{prop_t}(5)}
We need to show that the Graham--Houghton graph $\Gamma$ has property $(\diamond)$. For this we need to show that the conditions 
(a), (b) and (c)  all hold.  Recall that $D$ is a fixed $\D$-class of $\cT$, $\Lambda$ is the set of all $\L$-classes of
$D$, and $I$ is the set of $\R$-classes of $D$. The bipartite graph $\Gamma$ has $V\Gamma=I\cupdot\Lambda$ and
edges $(i,\lambda)\in E\Gamma$ if and only if $H_{i\lambda}$ is a group. 

Property (a), that $|I| = |\Lambda| = \aleph_0$, follows easily from the construction of $\cT$. Indeed, since, as already observed, all the principal factors are isomorphic to each other, and $\cT$ is regular and universal for finite semigroups, every finite right zero semigroup embeds in $D$ and since in any such semigroup all the elements must belong to distinct $\H$-classes it follows that $D$ has infinitely many $\L$-classes. Similarly, since arbitrary finite left zero semigroups embed, $D$ also has infinitely many $\R$-classes. There are countably many in each case since $\cT$ itself is countable. 

To prove property (b) we shall apply Proposition~\ref{gh-a}. Let $\Omega$ and $\Sigma$ be disjoint non-empty subsets of $\Lambda$. Let $\alpha_1,\dots,\alpha_k$ be a transversal of the $\L$-classes $\Omega$, and $\beta_1,\dots,\beta_l$ be a transversal of the $\L$-classes $\Sigma$. Since $\cT$ is a $\cT_n$-limit semigroup, there exists a subsemigroup $S\leq\cT$ such that $S\cong\cT_n$ for some $n$ and with $O = \{ \alpha_1, \ldots, \alpha_k, \beta_1, \ldots, \beta_l\} \subseteq S$. Since the elements from the set $O$ all come from distinct $\L$-classes if $\cT$ and $S \leq \cT$, it follows that the elements from $O$ all belong to distinct $\L$-classes of the semigroup $S$. Applying Proposition~\ref{gh-a} to $\alpha_1, \ldots, \alpha_k, \beta_1, \ldots, \beta_l$ in $\cT_n$ we conclude that there is an $m\in\N$ and an embedding $\psi:S\to\cT_m$ such that there exists $c\in\cT_m$ such that in $S$ we have $c\D \alpha_1\psi$,
$c\D \beta_1\psi$, $R_c\cap L_{\alpha_i\psi}$ is a group for all $1\leq i\leq k$, and $R_c\cap 
L_{\beta_j\psi}$ is not a group for all $1\leq j\leq l$. Now by the extension property there exists an embedding $\theta:\cT_m\to\cT$ such that $\psi\theta=\id_S$. Let $i_0 \in I$ be the index of the $\R$-class of $c\theta$ in $\cT$. We claim that (b) is satisfied by taking $i_0 \in I$. For this, we use the following general fact about periodic semigroups. 

\begin{lem}
Let $S$ and $T$ be periodic semigroups and let $f:S\to T$ be an embedding. Then for
all $x\in S$, $H_x^S$ is a group if and only if $H_{xf}^T$ is a group. 
\end{lem}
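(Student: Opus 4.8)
The plan is to replace the condition ``$H_x^S$ is a group'' by an \emph{equational} condition on $x$ alone, which is manifestly both preserved and reflected by the embedding $f$. Recall (as noted in the preliminaries) that an $\mathscr{H}$-class is a group precisely when it contains an idempotent. The key observation is the following claim, valid for any periodic semigroup $U$ and any $u \in U$: \emph{$H_u^U$ is a group if and only if $u = u^{p+1}$ for some integer $p \geq 1$.} Granting the claim, the lemma follows immediately. Applying it in the periodic semigroup $S$ shows that $H_x^S$ is a group iff $x = x^{p+1}$ holds in $S$ for some $p \geq 1$; since $f$ is an injective homomorphism this is equivalent to $xf = (xf)^{p+1}$ holding in $T$ for some $p \geq 1$ (apply $f$ for one implication, injectivity of $f$ for the other); and applying the claim in the periodic semigroup $T$ shows this last condition is equivalent to $H_{xf}^T$ being a group.

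It remains to prove the claim. For the ``if'' direction, which uses no periodicity, suppose $u = u^{p+1}$ with $p \geq 1$. By induction $u^{p+k} = u^k$ for all $k \geq 1$; in particular $u^{2p} = u^p$, so $e := u^p$ is an idempotent. Now $eu = u^{p+1} = u = ue$, and $e = u^p$ lies in both $uU^1$ and $U^1 u$ (as $p \geq 1$), while $u = eu \in eU^1$ and $u = ue \in U^1 e$; hence $u \mathbin{\mathscr{R}} e$ and $u \mathbin{\mathscr{L}} e$, so $u \mathbin{\mathscr{H}} e$ and $H_u^U = H_e^U$ contains the idempotent $e$, and is therefore a group. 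For the ``only if'' direction, suppose $H_u^U$ is a group $G$ with identity idempotent $e$. Since $G \subseteq U$ is periodic, it is a periodic group, so every element of $G$ — in particular $u$ — has finite order: there is $d \geq 1$ with $u^d = e$. Then $u = eu = u^d u = u^{d+1}$, which is the desired equation with $p = d$. This is the only point at which periodicity is used.

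The genuine obstacle here is conceptual rather than computational. The naive unpacking of ``$H_x^S$ is a group'' via $x \mathbin{\mathscr{H}} x^2$ gives statements such as $x \in x^2 S^1$, i.e.\ $x = x^2 s$ for some $s \in S^1$; such existential statements over the ambient semigroup transfer \emph{forwards} along $f$ (one gets $xf = (xf)^2(sf)$ with $sf \in T^1$, and similarly on the $\mathscr{L}$-side, so $H_{xf}^T$ is a group) but not obviously backwards, because a witness realising $H_{xf}^T$ as a group need not lie in the image of $f$. Periodicity is exactly what allows the witness to be taken to be a power of $x$ itself, converting the group condition into the embedding-stable equation $x^{p+1} = x$ and so closing the ``only if'' direction.
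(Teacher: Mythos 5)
Your proof is correct and follows essentially the same route as the paper: both arguments reduce ``$H_x$ is a group'' in a periodic semigroup to the equational condition $x^{k}=x$ for some $k>1$, which is transparently preserved and reflected by an injective homomorphism. The only difference is that you supply a full proof of this equivalence, whereas the paper states it as a known fact.
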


\begin{proof}
Since $S$ is periodic, $H_x^S$ is a group if and only if $x^k=x$ for some $k>1$. However, since $f$ is an isomorphism between $S$ and $Sf$, the latter condition holds
if and only if $(xf)^k=xf$, which holds if and only if $H_{xf}^T$ is a group.
\end{proof}

Applying this lemma it follows that in $\cT$ we have that $R_{c\theta} \cap L_{\alpha_i}$ is a group for all $1\leq i\leq k$ and $R_{c\theta} \cap L_{\beta_j}$ is not a group for all $1\leq j\leq l$. This completes the proof of (b). 

To prove (c) it will suffice to show $\cT\cong\cT^{opp}$. Recall that from Lemma~\ref{lem_opp} in Subsection~\ref{subsec_propB} it follows that for any $n \geq 1$ both $\cT_n$ and $\cT_n^{opp}$ belong to the class $\cC$.   

\begin{thm}\label{thm_flip}
Let $\cT$ be the maximally homogeneous full transformation limit semigroup. Then
$\cT\cong\cT^{opp}$.
\end{thm}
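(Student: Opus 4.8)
The plan is to exploit the uniqueness clause of the generic structure theorem (Theorem~\ref{genFr}) applied to the amalgamation class $(\cB,\sqsubseteq)$. The key observation is that the ``opposite'' operation $S \mapsto S^{opp}$ is an involutive operation on semigroups, it is functorial (if $\phi:A\to B$ is a homomorphism then $\phi:A^{opp}\to B^{opp}$ is one too, as noted in the proof of Lemma~\ref{lem_opp}), it preserves being a subsemigroup, and by Lemma~\ref{lem_opp} it maps $\cB$ bijectively to itself. Consequently, applying $^{opp}$ to everything in sight turns $(\cB,\sqsubseteq)$ into an amalgamation class that is \emph{equal} to $(\cB,\sqsubseteq)$ again: $S \sqsubseteq T$ iff $S^{opp} \sqsubseteq T^{opp}$. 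So $^{opp}$ is an auto-equivalence of the whole set-up.

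Concretely, I would argue as follows. Write $\cT = \bigcup_{i\in\N} P_i$ with $P_1 \sqsubseteq P_2 \sqsubseteq \cdots$ and each $P_i \in \cB$, as produced in the proof of Theorem~\ref{main_sem}. Then $\cT^{opp} = \bigcup_{i\in\N} P_i^{opp}$ with $P_1^{opp} \sqsubseteq P_2^{opp} \sqsubseteq \cdots$ and each $P_i^{opp} \in \cB$ by Lemma~\ref{lem_opp}. I claim $\cT^{opp}$ satisfies properties (1)--(3) of Theorem~\ref{genFr} relative to the class $(\cB,\sqsubseteq)$, so that by the uniqueness assertion of that theorem $\cT^{opp} \cong \cT$. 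Property (1) is immediate from the display above. For property (2), every $A \in \cB$ embeds (as a $\sqsubseteq$-substructure, i.e.\ as a subsemigroup) into $\cT$; but $A \cong (A^{opp})^{opp}$ and $A^{opp} \in \cB$, so $A^{opp}$ embeds into $\cT$ as a subsemigroup, hence $A \cong (A^{opp})^{opp}$ embeds into $\cT^{opp}$ as a subsemigroup. For property (3), suppose $A \sqsubseteq \cT^{opp}$ is finite and $f:A \to B$ is a $\sqsubseteq$-embedding with $B \in \cB$. Then $A^{opp} \sqsubseteq \cT$, $B^{opp} \in \cB$, and $f:A^{opp} \to B^{opp}$ is a $\sqsubseteq$-embedding; the extension property of $\cT$ gives a $\sqsubseteq$-embedding $g:B^{opp} \to \cT$ with $afg = a$ for all $a \in A^{opp}$. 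Reading the same maps back in the opposite semigroups, $g:B \to \cT^{opp}$ is a $\sqsubseteq$-embedding with $afg=a$ for all $a \in A$, which is exactly what property (3) requires for $\cT^{opp}$.

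With (1)--(3) verified for $\cT^{opp}$ relative to $(\cB,\sqsubseteq)$, the ``moreover'' part of Theorem~\ref{genFr} (the uniqueness up to isomorphism of the generic structure) yields $\cT^{opp} \cong \cT$. Alternatively, and perhaps more cleanly for the write-up, one can phrase this via the $\cB$-homogeneity characterisation directly: $\cT^{opp}$ is a $\cT_n$-limit semigroup (since $\cT_{n_i}^{opp} \in \cB$ and the chain $T_i^{opp}$ witnesses this via Lemma~\ref{lem_semLem}, noting that $\cT_n^{opp}$ is again a finite transformation semigroup after composing with an embedding of $\cT_n^{opp}$ into some $\cT_m$), it is universal for finite semigroups, and it is $\cB$-homogeneous because $\cT$ is: any isomorphism between subsemigroups $T_1,T_2$ of $\cT^{opp}$ with $T_1,T_2 \in \cB$ is, after switching to opposites, an isomorphism between subsemigroups $T_1^{opp},T_2^{opp}$ of $\cT$ with $T_1^{opp},T_2^{opp} \in \cB$, hence extends to an automorphism of $\cT$, which is then also an automorphism of $\cT^{opp}$ extending the original isomorphism. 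Theorem~\ref{main_sem} then forces $\cT^{opp} \cong \cT$.

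The only genuine subtlety — and the step I would be most careful about — is the claim that $\cT^{opp}$ is itself a \emph{$\cT_n$-limit} semigroup rather than merely a $\cB$-limit semigroup, since the uniqueness in Theorem~\ref{main_sem} is stated within the class of $\cT_n$-limit semigroups. This is not automatic from $\cT_n^{opp} \in \cB$ alone; one needs that each $\cT_{n_i}^{opp}$, composed with a suitable Cayley-type embedding $\cT_{n_i}^{opp} \hookrightarrow \cT_{m_i}$, produces a chain of genuine full transformation semigroups whose union is $\cT^{opp}$ — this is precisely the content of the (ii)$\Rightarrow$(i) and (i)$\Rightarrow$(ii) machinery of Lemma~\ref{lem_semLem}, applied to the $\cB$-homogeneous semigroup $\cT^{opp}$. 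Once that bookkeeping is in place, the conclusion $\cT \cong \cT^{opp}$ is immediate from the uniqueness part of Theorem~\ref{main_sem}.
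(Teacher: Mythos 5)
Your proposal is correct and follows essentially the same route as the paper: the paper also writes $\cT^{opp}=\bigcup_j T_j^{opp}$ with each $T_j^{opp}\in\cB$ by Lemma~\ref{lem_opp}, verifies the $\cB$-extension property of $\cT^{opp}$ by reading embeddings back and forth through $^{opp}$ exactly as in your verification of property (3), and then invokes Lemma~\ref{lem_semLem} together with the uniqueness of Theorem~\ref{main_sem} to conclude $\cT^{opp}\cong\cT$. The subtlety you flag (that $\cT^{opp}$ must be seen to be a genuine $\cT_n$-limit, not merely a $\cB$-limit) is handled in the paper precisely as you suggest, via Lemma~\ref{lem_semLem}.
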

\begin{proof}
Upon writing $\cT=\bigcup_{j\in\N}T_j$ as a union of subsemigroups such that $T_j\cong\cT_{i_j}$ for some $i_j$
we easily conclude that $\cT^{opp}=\bigcup_{j\in\N}T_j^{opp}$ where $T_j^{opp}\cong\cT_{i_j}^{opp}$. Hence,
$\cT^{opp}$ is a direct limit of semigroups from $\cB$. The semigroup $\cT^{opp}$ is also clearly countable and is universal for finite semigroups by the left Cayley Theorem for semigroups. It now follows from Lemma~\ref{lem_semLem} that if $\cT^{opp}$ is $\cB$-homogeneous then $\cT^{opp} \cong \cT$. We now prove that $\cT^{opp}$ is $\cB$-homogeneous by verifying that it has the $\cB$-extension property, which is equivalent to $\cB$-homogeneity since $\cT^{opp}$ is countable.  

Let $S$ be a finite subsemigroup of $\cT^{opp}$ with $S\in\cB$. Then $S$ is a subsemigroup of $T_j^{opp}$ for
some $j$, implying that $S^{opp}\in\cB$ is a subsemigroup of $T_j\cong\cT_{i_j}$. 
Observe that the domains of $S$ and $S^{opp}$ are equal. 
Consider an embedding $S\to U
\in\cB$; the same mapping is an embedding $S^{opp}\to U^{opp}\in\cB$. Now apply the extension property for $\cT$
to conclude that there is an embedding $U \rightarrow \cT$, which is the identity mapping when restricted to the set $S^{opp}$. But now this same map is an embedding $U^{opp} \rightarrow \cT^{opp}$ which is the identity mapping when restricted to the set $S$. This completes the proof. 
\end{proof}

Condition (c) now follows from (b) by applying Theorem~\ref{thm_flip}. Indeed, choose and fix an isomorphism $\theta:\cT \rightarrow \cT^{opp}$. Fix a $\D$-class of $\cT$. Since $\Aut(\cT^{opp})$ acts transitively on the set of idempotents in $\cT^{opp}$, and $\cT^{opp}$ is regular, we may choose $\theta$ so that the image of the set $D \subseteq \cT$ under $\theta$ is equal to the same set $D \subseteq \cT^{opp}$. Restricted to the set $D$ the map $\theta$ maps $E(D)$ bijectively to $E(D)$ in such a way that for any pair of idempotents $e,f$ we have $e \R f$ in $\cT$ if and only if $e \theta \L f \theta$ in $\cT^{opp}$, and dually $e \L f$ in $\cT$ if and only if $e \theta \R f \theta$. It follows that the mapping $\theta$ induces an automorphism of the Graham--Houghton graph of $D$ which swaps the two parts of the bipartition. The existence of this automorphism, together with the fact that we have already established property (b) for $\Gamma$, implies that $\Gamma$ also satisfies property (c). 
This completes the proof that $\Gamma$ is the
countable universal homogeneous bipartite graph and thus concludes the proof of Theorem~\ref{prop_t}.

Note that one consequence of the argument given in the previous paragraph is the following combinatorial result about finite transformation semigroups, which is the natural left-right dual to Proposition~\ref{gh-a}.  

\begin{cor}
Let $n,r\in\N$ with $n\geq 3$ and $1<r<n$. Let $a_1,\dots,a_k,b_1,\dots,b_l\in D_r$ (where $k,l \geq 1$) be representatives of distinct
$\R$-classes of $D_r\subseteq\cT_n$. Then there exists $m\in\N$ and an embedding $\psi:\cT_n\to\cT_m$ such that 
there is a $c\in\cT_m$ with the property that $R_{a_i\psi}\cap L_c$ is a group for all $i$ and $R_{b_j\psi}\cap
L_c$ is not a group for all $j$. 
\end{cor}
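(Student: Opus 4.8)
The plan is to derive this finite statement from the structural facts about $\cT$ already established, rather than re-running a left--right dual of the (somewhat involved) argument for Proposition~\ref{gh-a}. Concretely, I would transport the given configuration in $\cT_n$ into $\cT$, apply property $(\diamond)$ --- specifically clause (c) --- for the Graham--Houghton graph of a $\D$-class of $\cT$, and then pull the resulting witness back into a finite full transformation subsemigroup of $\cT$; the identity $\cT\cong\cT^{opp}$ of Theorem~\ref{thm_flip} is exactly what makes clause (c) available for free.

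First I would fix an embedding of $\cT_n$ into $\cT$ (available by universality of $\cT$) and identify $\cT_n$ with its image $S\leq\cT$. Since $\cT_n$ is regular, $S$ is a regular subsemigroup of $\cT$, so by \cite[Proposition~2.4.2]{Howie} the relations $\R$ and $\L$ (hence $\H$ and $\D$) on $S$ are the restrictions of those on $\cT$. Consequently $a_1,\dots,a_k,b_1,\dots,b_l$ lie in pairwise distinct $\R$-classes of $\cT$ and all lie in a single $\D$-class $D$ of $\cT$. Let $\Omega'$ be the set of $\R$-classes of $\cT$ meeting $\{a_1,\dots,a_k\}$ and $\Sigma'$ the set of those meeting $\{b_1,\dots,b_l\}$; these are disjoint, non-empty, finite subsets of the index set $I$ of $\R$-classes of $D$. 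By the proof of Theorem~\ref{prop_t}(5), the Graham--Houghton graph $\Gamma=GH(D)$ satisfies property $(\diamond)$, in particular clause (c): there is an $\L$-class $L$ of $D$ such that $R\cap L$ is a group $\H$-class for every $R\in\Omega'$ while $R\cap L$ is not a group for every $R\in\Sigma'$.

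Next I would record this configuration inside a finite subsemigroup. For each $i$ let $e_i$ be the idempotent of the group $\H$-class $R_{a_i}\cap L$; for each $j$ pick $d_j\in R_{b_j}\cap L$ (non-empty, being the intersection of an $\R$-class and an $\L$-class lying in the common $\D$-class $D$); and fix $c^*\in L$. Writing $\cT=\bigcup_j T_j$ with $T_0\leq T_1\leq\cdots$ and each $T_j\cong\cT_{i_j}$, the finite set $S\cup\{e_1,\dots,e_k,d_1,\dots,d_l,c^*\}$ lies in some $T_j$; put $U=T_j\cong\cT_m$, let $\psi\colon\cT_n\xrightarrow{\ \sim\ }S\hookrightarrow U\cong\cT_m$ be the composite embedding, and let $c\in\cT_m$ correspond to $c^*$. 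Since $U$ too is a regular subsemigroup of $\cT$, \cite[Proposition~2.4.2]{Howie} applies again and shows that the $\H$-class $R_{a_i\psi}\cap L_c$ of $\cT_m$ equals $(R_{a_i}\cap L)\cap U$ (the $\R$- and $\L$-classes taken in $\cT$), which contains the idempotent $e_i$ and hence is a group; likewise $R_{b_j\psi}\cap L_c$ equals $(R_{b_j}\cap L)\cap U$, which is non-empty (it contains $d_j$) but contains no idempotent, since $R_{b_j}\cap L$ contains none in $\cT$, and so is not a group. This is exactly the assertion of the corollary.

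The hard part --- and really the only non-routine point --- is the last step: a priori, passing from $\cT$ down to the finite subsemigroup $U$ could make the relevant $\H$-classes empty, or could turn a non-group $\H$-class into a group or conversely. Both dangers are defused precisely by building the idempotent witnesses $e_i$ and the elements $d_j$ into $U$ before invoking the $\cT_n$-limit structure of $\cT$, together with the elementary facts that an $\H$-class is a group exactly when it contains an idempotent and that $\R$ and $\L$ restrict correctly along regular subsemigroups. (One could instead dualise the construction in the proof of Proposition~\ref{gh-a}, replacing the action on $R_\eps\cup\{0\}$ by one on $L_\eps\cup\{0\}$ and the Flower Lemma by its transversal-dual, but that route is considerably more laborious.)
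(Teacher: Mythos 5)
Your proof is correct and follows essentially the same route as the paper, which derives this corollary precisely as a consequence of property (c) of the Graham--Houghton graph of $\cT$, itself obtained from property (b) via the isomorphism $\cT\cong\cT^{opp}$ of Theorem~\ref{thm_flip}. The only thing you add is to spell out the pull-back to a finite $\cT_m$ inside the $\cT_n$-limit (building the idempotents $e_i$ and the elements $d_j$ into $U$ and invoking \cite[Proposition~2.4.2]{Howie}), a step the paper leaves implicit; your details are sound.
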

It is not obvious how we would prove this corollary directly and combinatorially (i.e.\ how to find $m$, $\psi$ and $c$) except by going via the argument above which uses the isomorphism $\cT \cong \cT^{opp}$. It relates to the following general combinatorial problem: Let $P_1, P_2, \ldots, P_k, Q_1, Q_2, \ldots, Q_l$ be a family of distinct partitions of $[m]=\{1,2,\ldots,m\}$ each with exactly $t$ non-empty parts. Under what conditions on these partitions can one guarantee that there is a $t$-element subset $A$ of $[m]$ such that $A \perp P_i$ for all $1 \leq i \leq k$ and $A \not\perp Q_j$ for all $1 \leq j \leq l$?

\subsection{The principal factors of $\cT$} We have seen that all the principal factors of $\cT$ are isomorphic to each other, and that their Graham--Houghton graphs are isomorphic to the countable random bipartite graph. Let $J$ be a fixed $\J$-class of $\cT$. We do not currently have any characterisation of the principal factor $J^*$. Since $\cT$ is locally finite, and $\cT$ is regular, it follows that $J^*$ is a completely $0$-simple semigroup. 
If $\mathcal{CS}$ is the class of all finite completely $0$-simple semigroups then $(\mathcal{CS},\sqsubseteq)$ is an amalgamation class, in the sense of Theorem \ref{genFr}, where $\sqsubseteq$ denotes the subsemigroup relation restricted to semigroups from $\mathcal{CS}$. This is straightforward to prove by combining results from \cite{Imaoka} with \cite[Theorem 4]{Clarke}. The class $\mathcal{CS}$ also clearly satisfies (N1)--(N3) and from this, combined with Theorem \ref{genFr} the following result readily follows.  


\begin{thm}
Up to isomorphism, there is a unique  countable universal $\mathcal{CS}$-homo\-ge\-neous completely 0-simple 
semigroup $\mathcal{S}$.
\end{thm}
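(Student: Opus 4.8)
The plan is to apply Theorem~\ref{genFr} to the pair $(\mathcal{CS},\sqsubseteq)$, where $\mathcal{CS}$ is the class of finite completely $0$-simple semigroups and $\sqsubseteq$ is just the subsemigroup relation restricted to $\mathcal{CS}$. Conditions (N1), (N2) and (N3) are immediate, since $\sqsubseteq$ is a restriction of the subsemigroup relation: isomorphisms are subsemigroup embeddings, composites of subsemigroup embeddings are subsemigroup embeddings, and whether a finite substructure is a completely $0$-simple subsemigroup depends only on the substructure it induces. So the only real content is to verify that $(\mathcal{CS},\sqsubseteq)$ is an amalgamation class in the sense of Section~\ref{sec_ALa}: countably many isomorphism types and embeddings (clear, as these are finite structures in a finite signature), closure under $\sqsubseteq$-substructures (trivial here, since $\sqsubseteq$-substructures are by fiat required to lie in $\mathcal{CS}$), and the JEP and AP for $\sqsubseteq$-embeddings.

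First I would establish the JEP for $\mathcal{CS}$: given $A_1, A_2 \in \mathcal{CS}$, one needs a completely $0$-simple semigroup containing (copies of) both. This should follow from the fact that completely $0$-simple semigroups are closed under suitable constructions — for instance, one can realise each $A_i$ as a Rees matrix semigroup $\mathcal M^0[G_i; I_i, \Lambda_i; P_i]$, embed both groups $G_i$ into a common finite group $G$, and then build a larger Rees matrix semigroup over $G$ with index sets containing the $I_i$ and $\Lambda_i$ and a structure matrix chosen to restrict correctly to each $P_i$; alternatively one cites the embedding results of Imaoka~\cite{Imaoka}. Then I would establish the AP for $\sqsubseteq$-embeddings: given $A_0, A_1, A_2 \in \mathcal{CS}$ and $\sqsubseteq$-embeddings $f_1: A_0 \to A_1$, $f_2: A_0 \to A_2$, one needs $B \in \mathcal{CS}$ and $\sqsubseteq$-embeddings $g_i: A_i \to B$ with $f_1 g_1 = f_2 g_2$. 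This is exactly the assertion that every finite completely $0$-simple semigroup is an amalgamation base within $\mathcal{CS}$, and I would derive it by combining the amalgamation results for completely $0$-simple semigroups in~\cite{Imaoka} with \cite[Theorem~4]{Clarke} — the former handling amalgamation over completely $0$-simple bases with suitable compatibility, the latter supplying the structural bookkeeping for the structure matrices so that the amalgamated semigroup can again be taken completely $0$-simple.

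Having verified that $(\mathcal{CS},\sqsubseteq)$ is an amalgamation class satisfying (N1)--(N3), Theorem~\ref{genFr} immediately yields a countable $(2,0)$- (or semigroup-) structure $\mathcal{S} = \bigcup_{i\in\N} M_i$ with each $M_i \in \mathcal{CS}$ and $M_1 \sqsubseteq M_2 \sqsubseteq \cdots$, satisfying the extension property and the resulting $\sqsubseteq$-homogeneity. An increasing union of completely $0$-simple semigroups along subsemigroup embeddings is again completely $0$-simple (the $0$ is shared, and $0$-simplicity and the absence of proper nonzero ideals pass to the union since any element of $\mathcal{S}$ lies in some $M_i$), so $\mathcal{S}$ is completely $0$-simple; universality for finite completely $0$-simple semigroups is clause~(2) of Theorem~\ref{genFr}; $\mathcal{CS}$-homogeneity is the final clause; and uniqueness up to isomorphism is the ``moreover'' part of Theorem~\ref{genFr}.

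The main obstacle is the amalgamation step: the class of \emph{all} finite semigroups famously fails the amalgamation property, so one must genuinely use the completely $0$-simple structure, translating an arbitrary amalgam $A_1 \gets A_0 \to A_2$ into Rees-matrix coordinates, amalgamating the maximal subgroups as finite groups (which \emph{is} possible, by Hall or by elementary group amalgamation into a direct product / free product with amalgamation quotiented to a finite group), and then reconciling the three structure matrices $P_0, P_1, P_2$ into a single matrix $P$ over the amalgamated group and enlarged index sets so that the two natural maps $g_1, g_2$ are homomorphic embeddings agreeing on $A_0$. This is precisely where \cite{Imaoka} and \cite[Theorem~4]{Clarke} do the work, so the proof is a matter of assembling those citations correctly rather than proving amalgamation from scratch; still, care is needed because we are using the \emph{weak} notion of amalgamation base (we do not require the images of $A_1$ and $A_2$ in $B$ to meet exactly in the image of $A_0$), which is what makes the result go through smoothly.
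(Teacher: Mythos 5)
Your proposal is correct and follows essentially the same route as the paper: the paper's proof consists precisely of observing that $(\mathcal{CS},\sqsubseteq)$, with $\sqsubseteq$ the subsemigroup relation restricted to $\mathcal{CS}$, satisfies (N1)--(N3) and is an amalgamation class by combining the results of \cite{Imaoka} with \cite[Theorem 4]{Clarke}, and then invoking Theorem~\ref{genFr}. Your additional check that an increasing union of completely $0$-simple subsemigroups is again completely $0$-simple (shared zero, $0$-simplicity and primitivity of idempotents passing to the union) is a detail the paper leaves implicit, and it is correct.
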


\begin{problem}
Is it true that every principal factor of $\cT$ is isomorphic to the semigroup $\mathcal{S}$?
\end{problem}

\subsection{The right and left ideal structure of $\cT$}

We have seen that the $\J$-classes of $\cT$ have order type $(\mathbb{Q},\leq)$. For $\cI$ we proved that the semilattice of idempotents is isomorphic to the countable universal homogeneous semilattice. Since $\cI$ is an inverse semigroup it follows that both $\cI / \R$ and $\cI / \L$ are also isomorphic to the countable universal homogeneous semilattice. It is reasonable to ask whether either or both of the posets $(\cT / \R, \leq_{\R})$ or  $(\cT / \L, \leq_{\L})$ admit a similar nice description in terms of homogeneous structures. First of all, since $\cT\cong\cT^{opp}$, we have a poset isomorphism $(\cT/\L,\leq_\L)\cong(\cT/\R,\leq_\R)$. One natural guess might have been that this is the countable generic partially ordered set, that is, the \Fr limit of the class of finite partially ordered sets. The same could be conjectured for the \emph{Rees order} of idempotents of $\cT$, $(E(\cT),\leq)$, defined by $e\leq f$ if and only if $ef=fe=e$.

These conjectures turn out not to be true. Namely, we have: 

\begin{pro}\label{prop_V}
Neither $(\cT/\L,\leq_\L)$ nor $(E(\cT),\leq)$ is isomorphic to the countable universal homogeneous poset.
\end{pro}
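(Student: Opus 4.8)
The plan is to show that both posets $(\cT/\L,\leq_\L)$ and $(E(\cT),\leq)$ fail to be homogeneous by exhibiting a finite configuration of $\L$-classes (respectively idempotents) that rules out one of the defining extension properties of the countable generic poset. Recall that the generic poset $(\mathbb P,\leq)$ has the one-point extension property: given finite $A\subseteq\mathbb P$ and any partition of $A$ into a down-set, an up-set, and an incomparable set (consistent with the order), there is a point realising that type. So it suffices to find, inside $\cT$, a small finite sub-poset of $\L$-classes (or of idempotents) together with a prescribed type over it that can never be realised in $\cT$, no matter how large the ambient $\cT_n$ becomes.

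First I would exploit the rigidity coming from Green's relations and ranks. In any $\cT_n$-limit semigroup the $\J$-order is a chain, and each element's $\L$-class sits strictly below the $\L$-classes of all elements of strictly larger rank. More importantly, within a single $\D$-class $D_\kappa$ of $\cT_X$, the $\L$-classes of $D_\kappa$ are indexed by $\kappa$-element subsets of $X$, with $L_A\leq_\L L_B$ (for $A,B$ of the same rank $\kappa$) forcing $A=B$; so distinct $\L$-classes of the same rank are incomparable. The key constraint I would extract is a \emph{counting / dimension obstruction}: if $\mu$ is an $\L$-class of rank $r$ lying strictly below $\L$-classes $\lambda_1,\dots,\lambda_k$ of rank $r+1$, then in the ambient finite $\cT_m$ (where $\lambda_i$ corresponds to an $(r+1)$-subset $B_i$ and $\mu$ to an $r$-subset $A$) we must have $A\subseteq B_i$ for every $i$, hence $A\subseteq\bigcap_i B_i$. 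Thus there can be at most $r+1$ distinct rank-$(r+1)$ $\L$-classes above a common rank-$r$ $\L$-class whose images pairwise intersect in a set of size $r$; more sharply, once $A$ is fixed, the $B_i$ are exactly $A\cup\{x_i\}$ and there are only $m-r$ of them — but the real point is that the $B_i$ must all contain the \emph{same} $r$-set $A$. From this I would engineer a configuration: take rank-$r$ $\L$-classes realised by sets $A_1,A_2$ with $|A_1\cap A_2|<r$ (so no common $\L$-upper bound of rank $r+1$ with images containing both), together with a desired one-point type over $\{A_1,A_2,\dots\}$ placing a new $\L$-class strictly \emph{above both} $A_1$ and $A_2$ at rank $r+1$ — impossible since that would require an $(r+1)$-set containing $A_1\cup A_2$, a set of size $>r+1$. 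The homogeneity of the generic poset would demand such a point exists (the type is order-consistent as an abstract poset type), giving the contradiction. One must check the chosen abstract type is indeed consistent and realised in \emph{some} finite poset, which is the easy direction.

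For $(E(\cT),\leq)$ the argument is analogous but uses the idempotent structure: an idempotent of rank $r$ is determined by a partition $P$ into $r$ parts together with a transversal $A$, and $e\leq f$ (Rees order) forces $\im e\subseteq\im f$, so again an idempotent below several rank-$(r+1)$ idempotents forces its $r$-element image to be contained in each of their $(r+1)$-element images, and we reach the same obstruction: two idempotents of rank $r$ with images intersecting in fewer than $r$ points can have no common upper bound of rank $r+1$ in the Rees order (inside any $\cT_m$), whereas the generic poset would supply one. I would phrase both cases uniformly: exhibit $p_1,p_2$ (two $\L$-classes, resp.\ two idempotents) and a type $\tau$ over $\{p_1,p_2\}$ asking for $q>p_1$, $q>p_2$ with $q$ minimal such (i.e.\ of rank one more), show $\tau$ is realised in some finite poset but provably not in $\cT$ because of the containment-of-images constraint, contradicting homogeneity via the one-point extension characterisation of $(\mathbb P,\leq)$.

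The main obstacle, I expect, is \emph{pinning down exactly which abstract one-point type to use} so that (i) it is genuinely realisable in some finite poset — hence demanded by homogeneity of the generic poset — and (ii) its realisability in $\cT$ really is blocked, which requires being careful that the rank of the hypothetical new point is forced. The subtlety is that in $\cT$ the new $\L$-class need not have rank exactly $r+1$; an element above $L_{A_1}$ and $L_{A_2}$ could have much larger rank. So the type must also \emph{constrain from above}: include in the configuration a witness $\L$-class (or idempotent) of rank $r+2$ lying above a common extension, and ask the new point to lie strictly below it, squeezing the new point into rank $\leq r+1$. Getting this "sandwich" to be simultaneously order-consistent as an abstract poset and contradictory in $\cT$ — in effect, a small finite poset that embeds in no $\cT/\L$ yet is an amalgam of pieces that do embed — is the crux; the rest is the routine translation between $\cT_m$-combinatorics (subsets, partitions, transversals) and the poset language, using the lemmas already established (Green's relations in $\cT_X$, the transversal criterion $A\perp P$, and Lemma~\ref{lem_chains}/Corollary~\ref{cor_chain}).
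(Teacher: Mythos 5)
Your strategy has a genuine gap, and it is located precisely at the point you flag as ``the crux''. To show that $(\cT/\L,\leq_\L)$ fails a one-point extension axiom of the generic poset, you must show that the relevant type is realised by \emph{no} element of $\cT$, i.e.\ at no stage of the direct limit. But your obstruction --- ``no $(r+1)$-set contains $A_1\cup A_2$'', or in the sandwiched version ``there is no set strictly between $A_1\cup A_2$ and $D$'' --- is a cardinality computation carried out inside one fixed coordinatisation $\cT_m$. Neither rank nor the identification of $\L$-classes with subsets is an invariant of $\cT$: under the embeddings $\cT_n\to\cT_m$ occurring in the direct system, the images of your chosen elements are inflated in ways that differ from one $\L$-class to another (see e.g.\ the embedding $\psi$ in the proof of Proposition~\ref{gh-a}, where $\im(\alpha\psi)=\im\alpha\cup H_{\alpha\iota}^{\cT_{X\cup Y}}\cup\{0\}$ and distinct $\L$-classes acquire pairwise disjoint blocks). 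So the equality $A_1\cup A_2=D$ that blocks an intermediate point at one stage can become a strict inclusion at a later stage, at which point the type you claim is omitted is realised after all. You give no argument that the obstruction persists through every stage, and upper-bound configurations of this kind are exactly the ones the limit construction tends to complete; so the proposal as written does not establish the proposition.

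The paper avoids this difficulty by using \emph{lower} bounds together with an algebraic identity that is invariant because it is a consequence of the multiplication of $\cT$ itself rather than of a choice of coordinates. By universality there are distinct idempotents $e,f,g$ with $ef=fe=g$ (a copy of the non-chain three-element semilattice $V_3$), so $L_g<L_e$, $L_g<L_f$ and $L_e\parallel L_f$. The generic poset would supply $h$ with $L_h\leq L_e$, $L_h\leq L_f$ and $L_h\parallel L_g$. But $h\leq_\L e$ gives $he=h$, similarly $hf=h$, whence $hg=hef=hf=h$, i.e.\ $L_h\leq L_g$ --- a contradiction valid in $\cT$ outright, with no reference to any finite stage; the same computation disposes of $(E(\cT),\leq)$. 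If you want to salvage your approach, that is the lesson: the forbidden configuration must be certified by an identity or implication holding in the semigroup $\cT$, not by counting inside a finite representation.
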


\begin{proof}
Since $\cT$ is universal we can find distinct $e,f,g\in E(\cT)$ with $ef=fe=g$ (i.e.\ a copy of $V_3$, the 
non-chain 3-element semilattice, in $\cT$). Then $ge=g=gf$, so $L_g<L_e$, $L_g<L_f$ and $L_e\parallel L_f$. 
Now suppose, for the sake of a contradiction, that $P=(S/\L,\leq_\L)$ is isomorphic to the countable universal 
homogeneous poset. It follows from \cite[Axiom ${}_1\phi_0^2$]{AB} that there is an element $h\in E(\cT)$ 
such that $L_h\leq L_e$, $L_h\leq L_f$ and $L_h\parallel L_g$. If we had such an $h$ then $h\leq_\L e$ would 
imply $he=h$ and similarly we would conclude $hf=h$. It would follow $hg=hef=hf=h$, that is $h\leq_\L g$, 
a contradiction.

This argument easily adapts to the Rees order of idempotents of $\cT$.
\end{proof}

We currently do not know a good characterisation of the poset $(\cT/\L,\leq_\L)$. 
It may be shown that this poset has the following properties: (i) it is universal, (ii) it is dense and without minimal or maximal elements, and (iii) $\Aut(\cT)$ acts transitively on finite chains of $\L$-classes.  It may also be shown that the Rees order of $E(\cT)$ has the properties analogous to (i)--(iii). 
\begin{problem}
Give descriptions of the posets $(\cT /\L,\leq_\L)$ and $(E(\cT),\leq)$. Are they isomorphic to restricted \Fr limits of some kind, in the sense of Theorem~\ref{genFr}? 
\end{problem}

\section{Subsemigroups of $\cT$ and $\cI$}
\label{sec_countembed}

In the main results above, Theorems~\ref{prop_i} and \ref{prop_t}, we saw that $\cI$ embeds every finite inverse semigroup, and $\cT$ embeds every finite semigroup. As mentioned in the introduction above, Hall's group $\cU$ not only embeds every finite group, but also embeds every countable locally finite group. Given this, it is natural to ask: which semigroups embed into $\cT$, and which inverse semigroups embed into $\cI$? These are more difficult questions than the corresponding question for $\cU$. We shall discuss the problem for $\cT$. The situation for $\cI$ is similar.    

Every subsemigroup of $\cT$ is of course countable and locally finite. We do not know whether the converse is true. 
%
%
\begin{problem}\label{prob_A}
Does every countable locally finite semigroup embed into $\cT$? 
\end{problem}
%
%
Let $S$ be a countably infinite locally finite semigroup. We can write $S = \bigcup_{i \geq 0} S_i$ where the $S_i$ are distinct finite subsemigroups of $S$ with $S_i \subseteq S_{i+1}$ for all $i$. Since each $S_i$ is finite we know that for each $i \geq 0$ there is an embedding $\phi_i: S_i \rightarrow \cT$. If each $S_i$ belonged to the class $\cB$ then using the extension property from Theorem~\ref{genFr} it is not hard to see that the embeddings $\phi_i$ could be chosen in such a way that for each $i$, the map $\phi_i:S_i \rightarrow \cT$ is the restriction of $\phi_{i+1}:S_{i+1} \rightarrow \cT$. In this way the union of the maps $\phi_i$ for $i \geq 0$ would define an embedding of $S$ into $\cT$. Let us say that a semigroup $S$ is a $\cB$-limit semigroup if $S$ is the direct limit of a countable chain of embeddings of distinct finite semigroups  
\[
S_0 \rightarrow S_1 \rightarrow S_2 \rightarrow \ldots 
\] 
where $S_i \in \cB$ for all $i \geq 0$. The above discussion shows that every $\cB$-limit semigroup embeds into $\cT$. Of course, more generally, any subsemigroup of a $\cB$-limit semigroup embeds into $\cT$. This leads to the following straightforward result. 

\begin{pro}\label{prop_OK}
Let $S$ be a countably infinite semigroup. The following are equivalent: 
\begin{enumerate}
\item $S$ embeds into $\cT$; 
\item $S$ embeds into some $\cT_n$-limit semigroup; 
\item $S$ embeds into some $\cB$-limit semigroup.
\end{enumerate}
\end{pro}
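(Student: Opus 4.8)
The plan is to prove the cycle of implications $(1)\Rightarrow(2)\Rightarrow(3)\Rightarrow(1)$, where the last implication is the substantive one and has essentially been set up by the preceding discussion.

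First I would prove $(1)\Rightarrow(2)$, which is nearly immediate: $\cT$ itself is a $\cT_n$-limit semigroup by Theorem~\ref{main_sem}, so if $S$ embeds into $\cT$ then it embeds into a $\cT_n$-limit semigroup. Next, $(2)\Rightarrow(3)$ follows because every $\cT_n$-limit semigroup is itself a $\cB$-limit semigroup: writing it as a union of a chain $T_1 < T_2 < \cdots$ with each $T_i \cong \cT_{n_i}$, and recalling from Subsection~\ref{subsec_propB} that $\cT_n \in \cB$ for all $n$, we see that such a semigroup is a direct limit of a chain of embeddings of members of $\cB$. Hence anything embedding into a $\cT_n$-limit semigroup embeds into a $\cB$-limit semigroup.

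The main work is $(3)\Rightarrow(1)$: every $\cB$-limit semigroup embeds into $\cT$. Suppose $S = \bigcup_{i\geq 0} S_i$ is the union of a chain $S_0 \to S_1 \to S_2 \to \cdots$ of embeddings with each $S_i \in \cB$; without loss of generality we treat the $S_i$ as a genuine increasing chain of finite subsemigroups with $S_i \leq S_{i+1}$. I would build an embedding $\phi: S \to \cT$ as a direct limit of compatible embeddings $\phi_i: S_i \to \cT$ by induction on $i$. For the base case, use universality of $\cT$ (Theorem~\ref{prop_t}(1)) to embed $S_0$ into $\cT$; since $S_0 \in \cB$, its image is a $\sqsubseteq$-substructure of $\cT$. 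For the inductive step, suppose $\phi_i: S_i \to \cT$ has been constructed with $S_i\phi_i \sqsubseteq \cT$. The inclusion $S_i \hookrightarrow S_{i+1}$ is an embedding of members of $\cB$, hence a $\sqsubseteq$-embedding; composing with (the inverse of) $\phi_i$ gives a $\sqsubseteq$-embedding from the substructure $S_i\phi_i$ of $\cT$ into $S_{i+1} \in \cB$. Applying the extension property of $\cT$ (Theorem~\ref{genFr}(3), valid since $\cT$ is the generic structure for $(\cB,\sqsubseteq)$) yields a $\sqsubseteq$-embedding $\phi_{i+1}: S_{i+1} \to \cT$ that agrees with $\phi_i$ on $S_i$. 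Then $\phi = \bigcup_{i\geq 0}\phi_i$ is a well-defined injective homomorphism $S \to \cT$. Finally, for the ``more generally'' remark already noted in the text, any subsemigroup of a $\cB$-limit semigroup also embeds into $\cT$, since a subsemigroup of a $\cB$-limit semigroup still embeds into some $\cT_n$-limit semigroup and we can invoke $(2)\Rightarrow(1)$; but this is not needed for the three-way equivalence itself.

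The only delicate point is bookkeeping for the extension property: one must check that each $S_i\phi_i$ is genuinely a \emph{finite} $\sqsubseteq$-substructure of $\cT$ (true, since $S_i$ is finite and $S_i \in \cB$, and $\sqsubseteq$ being a $\sqsubseteq$-embedding depends only on the induced substructure on the image, which lies in $\cB$), and that the maps $\phi_{i+1}$ can be chosen to restrict to $\phi_i$ rather than merely to agree up to an automorphism — but this is exactly what Theorem~\ref{genFr}(3) delivers, since it guarantees $a\,fg = a$ for all $a$ in the domain. So the argument is routine once the framework of Section~\ref{sec_ALa} is in place, and I expect no real obstacle beyond stating it cleanly.
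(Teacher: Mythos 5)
Your proof is correct and follows essentially the same route as the paper: (1)$\Rightarrow$(2) and (2)$\Rightarrow$(3) from the facts that $\cT$ is a $\cT_n$-limit semigroup and each $\cT_n\in\cB$, and (3)$\Rightarrow$(1) via exactly the inductive application of the extension property of Theorem~\ref{genFr} that the paper sets out in the discussion preceding the proposition. The only cosmetic point is that for (3)$\Rightarrow$(1) one must still compose the given embedding of $S$ into a $\cB$-limit semigroup $W$ with the constructed embedding $W\to\cT$ --- this trivial composition is precisely the content of the ``more generally'' remark you set aside, so it is needed, but it costs nothing.
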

\begin{proof}
Since $\cT$ is a $\cT_n$-limit semigroup, and since $\cT_n \in \cB$ for all $n$, it follows that (1) implies (2) and that (2) implies (3). The fact that (3) implies (1) follows from the discussion preceding the statement of the proposition.  
\end{proof} 
By Cayley's Theorem every finite semigroup embeds into some finite full transformation semigroup. From above we are led to the following related question: 

\begin{problem}\label{prob_B}
Is every countable locally finite semigroup isomorphic to a subsemigroup of some full transformation limit semigroup? 
\end{problem} 
%
%
%
%


%
%
Related to the question of which semigroups embed into $\cT$, more generally we do not know the answer to the following very natural question:  

\begin{problem}
Does there exist a countable semigroup which embeds every countable locally finite semigroup? 
\end{problem} 

For the inverse semigroup $\cI$ the obvious analogue of Proposition~\ref{prop_OK} holds, and we have the corresponding open questions.  

\begin{problem}
Does every countable locally finite inverse semigroup embed into $\cI$? 
%
%
\end{problem} 


\section{The semigroups $\cC_n$ and $\cV_n$} \label{sec_cnvn}

As explained in the introduction, P. Hall's universal group in not only an $\cS_n$-limit group, but it can be expressed as an $\cS_n$-limit in a particularly nice way via successive embeddings arising from repeated application of Cayley's Theorem. It is natural to look for correspondingly nice descriptions of  $\cT$ as a $\cT_n$-limit semigroup, and of $\cI$ as a $\cI_n$-limit inverse semigroup. We currently do not know of any such nice descriptions. The purpose of this section is to explain why the obvious approach of trying to obtain $\cT$ by repeated iteration of Cayley's Theorem for semigroups does not work, and the corresponding observation for $\cI$.   

Fix a natural number $n$. Let $S_0 = \cT_n$ and then define the directed chain of full transformation semigroups
\begin{align*}
& S_0 \to S_1 \to S_2 \to \dots 
\end{align*}
where for each $i\geq 0$ we have $S_{i+1}=\cT_{S_i}$ and the embeddings are given in each case by taking the right regular representation. We use $\cC_n$ to denote the direct limit of this chain of semigroups. It is easy to see that for all natural numbers $n$ the semigroup $\cC_n$ is a monoid.
On the other hand, the semigroup $\cT$ is not a monoid. Indeed, since $\cT$ is universal it contains at least two distinct idempotents. Also, since $\Aut(\cT)$ acts transitively on members of $\cB$ in $\cT$ it follows that $\Aut(\cT)$ acts transitively on the set of idempotents $E(\cT)$ of $\cT$. This would be impossible if $\cT$ were a monoid, since any automorphism of a monoid must clearly fix its identity element. 
Therefore $\cT$ cannot be isomorphic to $\cC_n$ for any value of $n$. However, there is a natural monoid analogue $\cT'$ of the semigroup $\cT$ with the same definition but working in the category of monoids, and working with submonoids.
%
%
One could then ask whether or not $\cT'$ and $\cC_n$ are isomorphic for some value of $n$. 
In a similar way as for the semigroup $\cT$, one may show that every maximal subgroup of $\cT'$ is isomorphic to Hall's universal homogeneous group. We shall now show that this is not true of the semigroup $\cC_n$, and thus $\cT'$ and $\cC_n$ cannot be isomorphic.  

To this end, let $S=\cT_n$ and let $\psi:S\to\cT_S$ be the right regular representation map where $\alpha \psi = \rho_\alpha$. For any $\alpha \in \cT_n$ set 
\[
\fix\alpha = \{ i \in [n] : i \alpha = i \} 
 \quad \mbox{and} \quad 
\fix\rho_\alpha = \{ \gamma \in \cT_n : \gamma \alpha = \gamma \}.  
\]  
For any $\alpha \in \cT_n$ we have 
\[
\fix\rho_\alpha = \{ \gamma \in \cT_n : \im \gamma \subseteq \fix \alpha \}. 
\]
This is because 
$\gamma \alpha = \gamma$
holds if and only if 
$\alpha$ acts as the identity on $\im \gamma$, that is,
$\im \gamma \subseteq \fix \alpha$. 
The size of the set $\fix\rho_\alpha$ is of course just the number of maps from an $n$-element set into a set of size $|\fix\alpha|$. From this it immediately follows that if $|\fix\alpha| < |\fix\beta|$ then 
$|\fix\rho_\alpha| < |\fix\rho_\beta|$. 
Now 
%
%
suppose $1<r<n$ and $r\geq 5$. Fix an idempotent
\[
e=
\left(\begin{array}{cccccccc}
1 & 2 & 3 & \cdots & r & r+1 & \cdots 	& n\\
1 & 2 & 3 & \cdots & r & r & \cdots 		& r
\end{array}\right).
\]
Fix two elements of order two in the maximal subgroup $H_e$ defined as follows: 
%
%
\[
\alpha =  
\begin{pmatrix}
1 & 2 & 3 & 4 & \cdots & r & r+1 & \cdots 	& n\\
2 & 1 & 3 & 4 & \cdots & r & r & \cdots 	& r
\end{pmatrix}, 
\; \beta = 
\begin{pmatrix}
1 & 2 & 3 & 4 & 5 &\cdots & r & r+1 & \cdots 	& n\\
2 & 1 & 4 & 3 & 5 & \cdots & r & r & \cdots 	& r
\end{pmatrix}.
\]
We use the shorthand $\alpha=(12)$ and $\beta=(12)(34)$ for these elements of the maximal subgroup $H_e$. The elements $\alpha$ and $\beta$ both have order $2$, but they are not conjugate in $H_e$ since they have a different number of fixed points. By the observations above, $\alpha \psi$ and $\beta \psi$ are two elements in the $\H$-class $H_{e \psi}$ of $\cT_{\cT_n}$ both of order two, but they are not conjugate in $H_{e \psi}$ since they have different numbers of fixed points. Repeating this argument, we conclude that for every $i \geq 1$ the images of the elements $\alpha$ and $\beta$ under the embeddings
$
S_0 \rightarrow \ldots \rightarrow S_i
$
are not conjugate inside the unique maximal subgroup of $S_i$ to which they both belong. It follows that the elements that $\alpha$ and $\beta$ represent in $\cC_n$ both have order two, but are not conjugate in the maximal subgroup of $\cC_n$ to which they belong. But in Hall's group any two elements of order two are conjugate to each other. It follows that the maximal subgroup of $\cC_n$ containing $e \in S_0 \subseteq \cC_n$ is not isomorphic to Hall's group. This argument can easily be extended to prove the same result starting with any $e \in S_0 = \cT_n$ of rank $r$, where $2 \leq r \leq n-1$. Thus for all $n \geq 2$ the semigroup $\cC_n$ is not isomorphic to the universal maximally homogeneous locally finite monoid $\cT'$. 
%
%
An easy adaptation of the above argument can be used to prove that the 
inverse semigroup $\cV_n$ 
%
is never isomorphic to the universal locally finite maximally homogeneous inverse monoid $\cI'$

We do not know much about the semigroups $\cC_n$ and $\cV_n$. 
%
%
%
\begin{problem}
Do we have $\cC_n \cong \cC_m$ for all $n, m \geq 1$? We ask the same question for $\cV_n$. 
\end{problem}
It is not too hard to prove 
that for any $n\geq 2$, both $(\cC_n/\J,\leq)$ and $(\cV_n/\J,\leq)$ are isomorphic to $1+\mathbb{Q}+1$.  
Also, the ideas in the proof of Theorem~\ref{prop_i}(4) suffice to prove that the semilattice $(E(\cV_n),\leq)$
satisfies condition ($\ast$) in part (iii) of Theorem~\ref{Omega}. However, $E(\cV_n)$ does have unique maximal and minimal idempotents.  
By applying results and arguments from \cite{Droste},  \cite{AB} and \cite[pages 33-34]{DKT} it may be shown that $E(\cV_n)$ is isomorphic to any nontrivial interval in
$\Omega$, which in turn is isomorphic to the countable universal homogeneous semilattice monoid with zero.
%
%
%
%
%


\begin{acknowledgements}
This work was supported by the London Mathematical Society Research in Pairs (Scheme 4) grant ``Universal locally finite partially homogeneous
semigroups and inverse semigroups'' (Ref:\ 41530), to fund a 9-day research visit of the first named author to the University of East Anglia 
(Summer 2016). The research of I.\ Dolinka was supported by the Ministry of Education, Science, and Technological Development of the 
Republic of Serbia through the grant No.174019. This research of R.\ D. Gray was supported by the EPSRC grant EP/N033353/1 ``Special inverse monoids: subgroups, structure, geometry, rewriting systems and the word problem''.
The authors would like to thank the anonymous referee for their helpful comments, and also to thank the handling editor of the paper Manfred Droste whose comments led to the questions that are now posed in Section~8.  
\end{acknowledgements}


\end{document}